\newcommand{\C}{\mathbb{C}}
\newcommand{\R}{\mathbb{R}}
\newcommand{\N}{\mathbb{N}}
\newcommand{\Z}{\mathbb{Z}}
\newcommand{\Hh}{\mathbb{H}}
\newtheorem{theorem}{Theorem}
\newtheorem*{theorem*}{Theorem}
\newtheorem{prop}{Proposition}[section]
\newtheorem{definition}[prop]{Definition}
\newtheorem{notation}{Notation}
\newtheorem{lemma}[prop]{Lemma}
\newtheorem{cor}[prop]{Corollary}
\newtheorem{remark}{Remark}
\DeclareMathOperator{\Op}{Op}
\DeclareMathOperator{\supp}{supp}
\newcommand{\lift}[1]{\mathbf{#1}}
\title{Long time Quantum evolution of observables on cusp surfaces}
\author{Yannick Bonthonneau}
\email{yannick.bonthonneau@ens.fr}
\address{DMA, U.M.R. 8553 CNRS, \'Ecole Normale Superieure, 45 rue d'Ulm,
75230 Paris cedex 05, France}
\begin{document}

\begin{abstract}
We build a semi-classical quantization procedure for finite volume manifolds with hyperbolic cusps, adapted to a geometrical class of symbols. We prove an Egorov Lemma until Ehrenfest times on such manifolds. Then we give a version of Quantum Unique Ergodicity for the Eisenstein series for values of $s$ converging slowly to the unitary axis.
\end{abstract}
\maketitle

\thispagestyle{empty}
In this paper, we work with non-compact complete manifolds $(M,g)$ of finite volume with hyperbolic ends. Such manifolds are called \emph{cusp manifolds}. They decompose into a compact manifold with boundary and a finite number of cusp-ends $Z_1, \dots Z_m$; that is, of the type: 
\begin{equation*}
Z_a^\Lambda = [a, +\infty)_y \times \mathbb{T}^d_{\theta} \text{ with the metric } ds^2 = \frac{dy^2 + d\theta^2}{y^2}
\end{equation*} 
where $ d \theta^2$ is the canonical flat metric on the $d$-dimensionnal torus $\mathbb{T}^d=\R^d/\Lambda$. The Laplacian on compactly supported smooth functions on $M$ is essentially self-adjoint, so it has a unique self-adjoint extension $\Delta$ to $L^2(M)$. Here, we take the analyst's convention that $\Delta$ is a non-positive operator. In \cite{MR699488}, Yves Colin de Verdi\`ere proved that for cusp \emph{surfaces}, the resolvent of the Laplacian has a meromorphic continuation through the continuous spectrum. Another proof was given in any dimension with a more general definition of cusps by M\"uller in \cite{MR725778}. It gives the following. The operator defined on $L^2$ for $\Re s > d/2$ and $s\notin [d/2 , d]$,
\begin{equation*}
\mathcal{R}(s) = (-\Delta - s(d-s))^{-1}
\end{equation*}
has a meromorphic continuation to the whole complex plane, as an operator $C^\infty_c(M) \to C^\infty(M)$. The poles are called resonances. Those on the vertical line $\Re s = d/2$ (called the unitary axis), and on the segment $[0, d]$ correspond to discrete $L^2$ eigenvalues. However the others are associated to continuous spectrum. The way to prove this uses a meromorphic family of eigenfunctions, the so-called \emph{Eisenstein series} $\{E_i(s)\}_{s\in \C, i=1\dots m}$. Those are smooth, \emph{not} $L^2$, and satisfy
\begin{equation*}
-\Delta E_i(s) = s(d-s) E_i(s), \quad s\in \C.
\end{equation*}
The $E(s)$ naturally replace the $L^2$ eigenfunctions as spectral data for the continuous spectrum. Actually, the data can be alternatively considered to be the values of $E$ for $s$ on the unitary axis, the full family $\{E(s)\}_{s \in \C}$, or the poles and the residues at the poles of the family --- a.k.a the resonant states. Observe that it is not always easy to translate results between those formulations. We are trying to determine whether properties of $L^2$ eigenfunctions for compact manifolds remain true for this spectral data in the non-compact case; we also seek to know if new behavior arise.

\begin{wrapfigure}{r}{0.35\textwidth}
\def\svgwidth{0.40\textwidth}
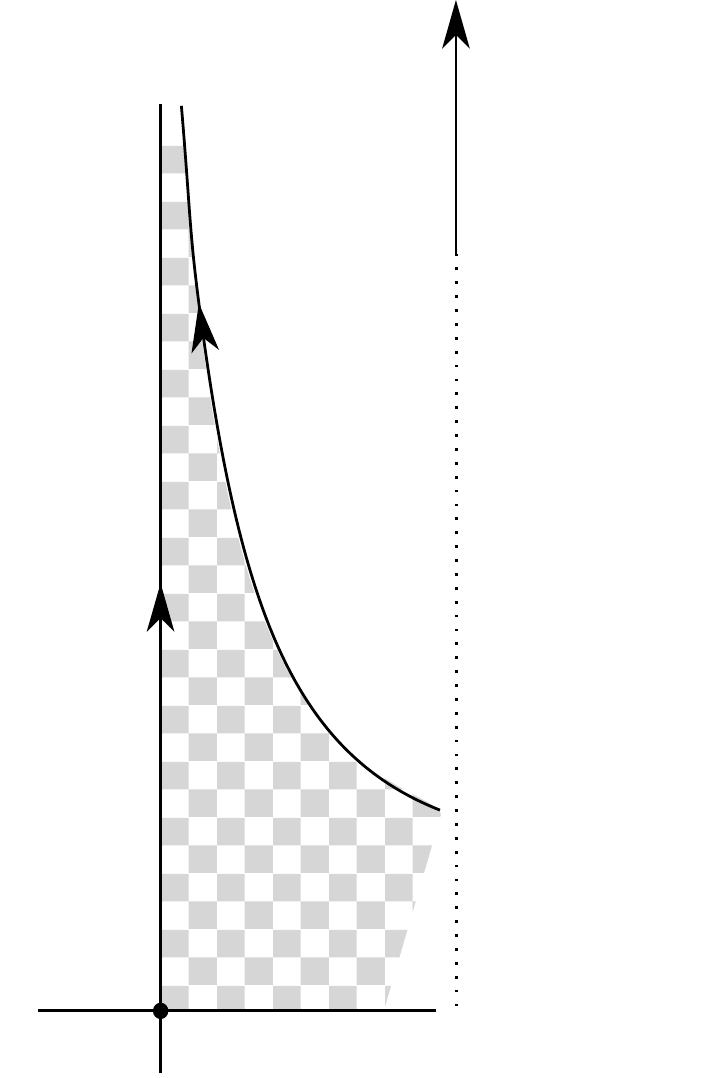
\end{wrapfigure}

The figure gives a synthetic vision of what is known (including our result) on the semi-classical measures: arrows represent asymptotics of sequences of $s$'s, and we give the associated semi-classical measure next to it.

In \cite{MR2913877}, in the case of surfaces, Semyon Dyatlov proved that when $|\Im s| \to \infty$ and $\Re s \to 1/2 + \eta$ with $\eta > 0$, the microlocal measures associated to $E_i(s)$ converge to an explicit measure $\mu_{i,\eta}$ on $S^\ast M$ --- the unit cotangent sphere. It satisfies $(X - 2\eta) \mu_{i,\eta} =0$ where $X$ is the generator of the geodesic flow. This result does not rely upon dynamical properties of the geodesic flow such as ergodicity. We recover a similar result in any dimension. When the curvature of the manifold is (strictly) negative, we adopt Babillot's argument \cite{MR1910932} which relies on the Local Product Structure to prove (see lemma \ref{lemma:convergence_measure}) that
\begin{equation*}
\shoveleft{ \mu_{i,\eta} \underset{\eta \to 0}{\rightharpoonup} \mathscr{L}_1}
\end{equation*}
where $\mathscr{L}_t$ is the normalized Liouville measure on $t S^\ast M$. In this paper, we build a semi-classical pseudo-differential calculus $\Psi(M)$ with symbols $S(M)$ and a quantization $\Op_h$ (sections 1 and 2.1). For $s\in \C$ not a resonance, let $\mu_i^h(s)$ be the distribution on $T^\ast M$
\begin{equation*}
\langle\mu_{i,j}^h(s),\sigma\rangle := \langle \Op_h(\sigma) E_i(s), E_j(s) \rangle.
\end{equation*}
Also consider
\begin{equation*}
\langle\mu^h(s),\sigma\rangle := \sum_{i} \langle \Op_h(\sigma) E_i(s), E_i(s)\rangle.
\end{equation*}

We prove:
\begin{theorem}\label{theorem:Introduction}
Assume $M$ is a cusp manifold of negative curvature. Then there is a positive constant $C_0$ such that the following holds. 
\begin{equation*}
\eta \times \mu_{i,j}^h\left(\frac{d}{2} + \eta + \frac{i}{h}\right) \underset{h \to 0, \eta \to 0}{\longrightarrow}  \delta_{ij} \pi \mathscr{L}_1
\end{equation*}
as long as 
\begin{equation}\label{eq:slow_condition}
\eta > C_0 \frac{\log |\log h|}{|\log h|}.
\end{equation}
\end{theorem}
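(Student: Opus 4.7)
My approach is to combine three ingredients: the commutator identity forced on $\mu^h_{i,j}(s)$ by the Eisenstein eigenvalue equation, the long-time Egorov lemma announced in the paper's cusp calculus, and the Babillot local-product-structure argument already used for Lemma \ref{lemma:convergence_measure}.

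First, I would derive an approximate eigenmeasure identity. With $s=d/2+\eta+i/h$, the equation $-\Delta E_i(s)=s(d-s)E_i(s)$ becomes $(-h^2\Delta-1)E_i(s)=(-2ih\eta+O(h^2\eta^2))E_i(s)$. Pairing the semiclassical commutator $[-h^2\Delta,\Op_h(\sigma)]$ against $E_i(s)$ and $E_j(s)$, and using the symbolic expansion $[-h^2\Delta,\Op_h(\sigma)]=-ih\,\Op_h(H\sigma)+O_{\Psi(M)}(h^2)$, where $H=2X$ on $S^*M$ is the Hamilton field of $|\xi|_g^2$, yields
\[
\mu^h_{i,j}(s)(H\sigma)=4\eta\,\mu^h_{i,j}(s)(\sigma)+O(h)\cdot\|\sigma\|_{S(M)}.
\]
This is the semiclassical avatar of Dyatlov's identity $(X-2\eta)\mu_{i,\eta}=0$.

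Next, I would iterate along the geodesic flow. Using the paper's Egorov lemma up to the Ehrenfest time $T_E=c_0|\log h|$, one obtains
\[
\mu^h_{i,j}(s)(\sigma\circ\phi_t)=e^{-4\eta t}\,\mu^h_{i,j}(s)(\sigma)+O(h\,e^{\Lambda t})\,\|\sigma\|_{S(M)}
\]
for $|t|\le T_E$, provided $c_0\Lambda<1$ where $\Lambda$ bounds the Lyapunov exponents. Convolving against the probability kernel $k_{T,\eta}(t):=\frac{4\eta}{1-e^{-4\eta T}}e^{-4\eta t}\mathbf{1}_{[0,T]}(t)$ and taking $T=T_E$, I get
\[
\eta\,\mu^h_{i,j}(s)(\sigma)=\eta\,\mu^h_{i,j}(s)\!\left(\int_0^{T_E}k_{T_E,\eta}(t)\,\sigma\circ\phi_t\,dt\right)+o_{h\to 0}(1).
\]
Condition \eqref{eq:slow_condition} is exactly what forces $\eta\,T_E=c_0C_0\log|\log h|\to\infty$, so that $k_{T_E,\eta}$ converges in total variation to the full Babillot kernel $4\eta e^{-4\eta t}\mathbf{1}_{\R_+}$.

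Finally, the local-product-structure argument (as recalled for Lemma \ref{lemma:convergence_measure}) shows that, as $\eta\to 0$,
\[
\int_0^\infty 4\eta\,e^{-4\eta t}(\sigma\circ\phi_t)\,dt\longrightarrow \pi\,\mathscr L_1(\sigma)
\]
uniformly on compact subsets of $T^*M$. Combined with the a priori bound $\eta\,\mu^h_{i,i}(s)(\chi)=O(1)$ for $\chi$ supported in a fixed compact of $T^*M$ (a consequence of $\|E_i(s)\|^2_{L^2_{\mathrm{loc}}}\asymp 1/\eta$ via the Maass--Selberg formula), this closes the diagonal case $\eta\,\mu^h_{i,i}(s)(\sigma)\to \pi\mathscr L_1(\sigma)$. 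Off-diagonally, the same averaging reduces $\eta\,\mu^h_{i,j}(s)(\sigma)$ to $\pi\mathscr L_1(\sigma)$ times the regularized Eisenstein pairing $\langle E_i(s),E_j(s)\rangle_{\mathrm{reg}}$, which vanishes for $i\ne j$ by the Maass--Selberg orthogonality relations, giving the $\delta_{ij}$. The main obstacle, and the source of condition \eqref{eq:slow_condition}, lies in the Egorov step: the geodesic flow drives compactly supported symbols arbitrarily deep into the cusps, where $E_i(s)$ grows like $y^s$; one must check that $\sigma\circ\phi_t$ remains in the geometric class $S(M)$ with tame seminorm growth up to $t=T_E$ and that the remainder is controlled against the Eisenstein series there. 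The threshold $\eta>C_0\log|\log h|/|\log h|$ is precisely the balance between the exponential Ehrenfest loss $e^{\Lambda T}$ and the exponential averaging gain $e^{-4\eta T}$ that makes the Babillot kernel equidistribute.
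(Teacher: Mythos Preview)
Your outline has a genuine gap at the point where you try to pass from the quasi-invariance relation to the Liouville limit. Steps 1 and 2 (granting the technical issues below) give only an approximate identity $(X-2\eta)\mu^h_{i,j}\approx 0$, equivalently $\mu^h_{i,j}(s)(\sigma\circ\varphi_t)\approx e^{-2\eta t}\mu^h_{i,j}(s)(\sigma)$. But such a relation does \emph{not} determine the measure: for every geodesic $\gamma$ the weighted arclength $e^{2\eta t}\,dt$ along $\gamma$ also satisfies $(X-2\eta)\mu=0$, so nothing in your averaging singles out $\mathscr{L}_1$. Your claim that $\int_0^\infty 4\eta e^{-4\eta t}\sigma\circ\varphi_t\,dt\to\pi\mathscr{L}_1(\sigma)$ uniformly on compacta is false on a cusp manifold (trajectories escaping to the cusp never equidistribute), and even if you had pointwise convergence, you would still need to evaluate $\mu^h_{i,j}$ on the limit, which is a non-compactly-supported constant. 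The paper closes this gap by a completely different route: it first replaces $E_i$ on the support of $\sigma$ by the propagated incoming plane wave $\chi_{T-\ln 3}\,e^{\frac{i}{h}t(P-W)}\chi_{T+t}\chi_i y^s$ (Lemmas~\ref{lemma:approx_by_lagrangian_1}--\ref{lemma:approx_by_lagrangian_2}, using the resolvent bound $\|(-\Delta-s(d-s))^{-1}\|\le h/2\eta$), then conjugates by $U(t)$ and performs an explicit stationary phase computation in the cusp (Lemma~\ref{lemma:approx_dynamical_incoming_measure}) to identify $\eta\mu^h_{i,i}(s)(\sigma)$ with the concrete horocycle integral $\mu^+_{i,\eta}(\sigma)$. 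Only \emph{this} measure --- supported on the orbit of a specific incoming horosphere --- is then shown to equidistribute via Babillot (Lemma~\ref{lemma:convergence_measure}). Your argument never produces this horocycle structure, which is where the $\delta_{ij}$ and the normalization $\pi$ actually come from.

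There are also technical problems upstream. The Egorov remainder in the paper is controlled only in $H^{-N}\to H^N$ norm, but $E_i(s)\notin L^2$; you cannot pair the remainder directly with $E_i,E_j$ without first localizing, and any localization costs a factor $\|\chi E_i\|_{L^2}=O(1/\eta)$ which your $O(h\,e^{\Lambda t})$ error does not reflect. Likewise, your kernel computation is inconsistent: with $k_{T,\eta}(t)=\frac{4\eta}{1-e^{-4\eta T}}e^{-4\eta t}$ one finds $\int_0^T k_{T,\eta}(t)e^{-4\eta t}\,dt\to\tfrac12$, not $1$, so the displayed identity in your Step~2 does not hold as written; and the correct exponent from $U(t)E_i=e^{-itW/h}E_i$ is $e^{-2\eta t}$, not $e^{-4\eta t}$. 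Finally, the off-diagonal vanishing in the paper is not obtained from Maass--Selberg but from the disjointness of the supports of $\chi_i E^0$ and $\chi_j E^0$ after propagation (Remark~\ref{remark:support_Egorov_lemma}).
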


If $s$ is a pole of $E$, then the resonant state at $s$ is a linear combination of the $E_i$'s at $1-s$. Hence, if we have a sequence of resonances $s(h)$ such that $1-s$ satisfies the hypothesis of theorem \ref{theorem:Introduction}, the semi-classical measures associated to the corresponding sequence of resonant states converges to Liouville after a suitable rescaling. However, we have no information on that rescaling; it is related to the size of the residues of the scattering determinant --- see section \ref{section:spectr_theory} for a definition --- and that seems to be a difficult problem. 

However, apart from the case of arithmetic cusp surfaces, it is quite possible that the region of the plane we are considering contains \emph{no} resonances. What is more $\eta \to \nu >0$ corresponds to a negligible part of the resonances, and we suspect it is also the case of resonances satisfying equation \eqref{eq:slow_condition}, at least for generic metrics.

It is worthwile to recall the result obtained by Steven Zelditch, in \cite{MR1105653}, of Quantum Ergodicity for values of $E(s)$ on the unitary axis, in the case of \emph{hyperbolic} surfaces. The set of poles of $\{E(s)\}_{s\in\C}$ is encoded in what is called the \emph{Scattering phase}, which is a function $\mathcal{S}$ on $\R$ (see section 2.1.1). 
\begin{theorem*}[Zelditch]
For any $T>0$,
\begin{equation*}
h \int_{-T}^T \left| \mu^h\left(\frac{d}{2} + \frac{it}{h} \right) - 2\pi \mathcal{S}'\left(\frac{t}{h}\right)\mathscr{L}_{t^2/h^2} \right| d t \underset{h \to 0}{\longrightarrow} 0
\end{equation*}
\end{theorem*}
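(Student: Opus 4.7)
The plan is to follow the variance method of Quantum Ergodicity, combining three ingredients: a semi-classical Maass--Selberg identity, the Egorov lemma for $\Op_h$ developed earlier in the paper, and ergodicity of the geodesic flow on the energy shells of a hyperbolic surface. Testing against a fixed symbol $\sigma \in S(M)$, the Maass--Selberg relations for the regularized inner product of Eisenstein series, rewritten in the semi-classical variable $\lambda = t/h$, should produce the first-moment identity
\begin{equation*}
h \int_{-T}^T \langle \mu^h(\tfrac{1}{2} + it/h), \sigma \rangle\, dt = 2\pi \int_{-T}^T \mathcal{S}'(t/h) \int \sigma\, d\mathscr{L}_{t^2/h^2}\, dt + o_\sigma(1),
\end{equation*}
in which $\mathcal{S}'(t/h)$ plays the role of the local Weyl density for the continuous spectrum on the unitary axis.

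Next, fix an auxiliary time $T_0 > 0$ and consider the classical average $\langle\sigma\rangle_{T_0} = T_0^{-1}\int_0^{T_0} \sigma \circ \phi_r\, dr$, with $\phi_r$ the geodesic flow on $T^\ast M \setminus 0$. Since $E_i(\tfrac{1}{2} + it/h)$ is a formal eigenfunction of $-h^2\Delta$ with eigenvalue $t^2 + h^2/4$, it is also (formally) an eigenfunction of the half-wave propagator with a phase of modulus one. Egorov's lemma applied on $[0, r]$ then gives
\begin{equation*}
\langle \Op_h(\sigma \circ \phi_r) E_i, E_i\rangle = \langle \Op_h(\sigma) E_i, E_i\rangle + O_{\sigma, T_0}(h), \qquad r \in [0, T_0],\ |t| \in [T^{-1}, T],
\end{equation*}
uniformly. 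Averaging over $r \in [0, T_0]$ replaces $\sigma$ by $\langle\sigma\rangle_{T_0}$ in the pairing. Let $\bar\sigma(t,h) = \int \sigma\, d\mathscr{L}_{t^2/h^2}$. Cauchy--Schwarz applied to $\Op_h(\langle\sigma\rangle_{T_0} - \bar\sigma) E_i$ (on a cut-off version of $E_i$ to handle the lack of $L^2$-ness), together with the composition rule $\Op_h(\tau)^\ast \Op_h(\tau) = \Op_h(|\tau|^2) + O(h)$, yields
\begin{equation*}
h \int_{-T}^T |\langle \Op_h(\langle\sigma\rangle_{T_0} - \bar\sigma) E_i, E_i\rangle|\, dt \leq (2Th)^{1/2} \left(h \int_{-T}^T \langle \Op_h(|\langle\sigma\rangle_{T_0} - \bar\sigma|^2) E_i, E_i\rangle\, dt\right)^{1/2}.
\end{equation*}
Applying the first-moment identity to the non-negative symbol $|\langle\sigma\rangle_{T_0} - \bar\sigma|^2$ controls the right side by a constant multiple of $(\int_{-T}^T \mathcal{S}'(t/h) \|\langle\sigma\rangle_{T_0} - \bar\sigma\|_{L^2(\mathscr{L}_{t^2/h^2})}^2\, dt)^{1/2}$. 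On a hyperbolic surface the geodesic flow is Anosov, hence ergodic on every energy shell with respect to $\mathscr{L}$; von Neumann's ergodic theorem gives $\|\langle\sigma\rangle_{T_0} - \bar\sigma\|_{L^2} \to 0$ as $T_0 \to \infty$. Letting first $h \to 0$ (with $T_0$ fixed) and then $T_0 \to \infty$ proves the statement.

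The main obstacle is step one: the semi-classical Maass--Selberg identity with exactly $\mathcal{S}'(t/h)$ as density of modes, in a form compatible with the cusp-adapted calculus $\Op_h$. Since the Eisenstein series are not square-integrable, both the pairing $\langle \Op_h(\sigma) E_i, E_i\rangle$ and the Cauchy--Schwarz step must be regularized by truncating each cusp at height $y = A$ and carefully tracking the resulting boundary terms produced by integration by parts against $-h^2\Delta - (t^2 + h^2/4)$. These boundary terms must be shown, after integration in $t \in [-T, T]$ and the limit $A \to \infty$, to combine into exactly $2\pi\mathcal{S}'(t/h)\int\sigma\,d\mathscr{L}_{t^2/h^2}$, with a remainder vanishing as $h \to 0$. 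A secondary technical issue is uniformity in $t$ of the Egorov estimate across the moving energy shell $|\xi| = |t|/h$, which the energy-dependent pseudo-differential calculus of the paper is designed to handle.
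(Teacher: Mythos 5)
You should first be aware that the paper does not prove this statement at all: it is recalled verbatim as Zelditch's theorem and attributed to \cite{MR1105653}, so there is no internal proof to compare your argument with; the relevant comparison is with Zelditch's original work. Your outline does reproduce his strategy --- a regularized local Weyl law for the Wigner distributions of Eisenstein series obtained from the Maass--Selberg relation, then Egorov, time averaging, Cauchy--Schwarz and ergodicity of the geodesic flow --- so the route is the right one. But the decisive ingredient, the averaged first-moment identity $h\int_{-T}^{T}\langle\mu^h(\tfrac12+it/h),\sigma\rangle\,dt = 2\pi h\int_{-T}^{T}\mathcal{S}'(t/h)\int\sigma\,d\mathscr{L}_{t^2/h^2}\,dt + o_\sigma(1)$, is only asserted ("should produce"), and you correctly identify it as the main obstacle. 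That identity \emph{is} the substance of Zelditch's theorem: one must truncate the cusps, control the boundary terms coming from the constant terms $y^s+\phi(s)y^{1-s}$, show that the truncation-dependent divergences cancel against the $2\log A$ term of Maass--Selberg leaving exactly the scattering-phase density, and separate the contribution of embedded cusp forms. Everything downstream (the normalization of the variance, the treatment of the non-$L^2$ states, the $O(1)$ bounds) hinges on it, so as written the proposal is a correct skeleton with its load-bearing step missing.

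There is also a concrete error in the Cauchy--Schwarz display: from $|\langle \Op_h(\tau)E_i,E_i\rangle|\le\|\Op_h(\tau)\chi E_i\|\,\|\chi E_i\|$ and Cauchy--Schwarz in $t$, the correct bound is $h\int_{-T}^{T}|\langle \Op_h(\tau)E_i,E_i\rangle|\,dt \le \bigl(h\int_{-T}^{T}\|\chi E_i\|_{L^2}^{2}\,dt\bigr)^{1/2}\bigl(h\int_{-T}^{T}\langle \Op_h(|\tau|^2)E_i,E_i\rangle\,dt\bigr)^{1/2}$, where the first factor is $O(1)$ only \emph{because of} the first-moment identity applied to a cutoff symbol (since $h\int_{-T}^{T}\mathcal{S}'(t/h)\,dt=O(1)$ by the Weyl asymptotics of the scattering phase). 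The truncated norms $\|\chi E_i(\tfrac12+it)\|^2$ are not uniformly bounded in $t$ --- Maass--Selberg makes them of size $\log$ plus scattering terms --- so the factor $(2Th)^{1/2}$ you wrote, with its extra decaying $h^{1/2}$, is not available; fortunately the conclusion you need only requires the $O(1)$ factor. Finally, the uniformity of Egorov across the moving shells $|\xi|=|t|/h$ is handled exactly by the homogeneity rescaling $\varphi_t(\kappa\xi)=\kappa\varphi_{\kappa t}(\xi)$ used in section 2.2.2 of the paper for $|t|\in[T^{-1},T]$, while the region $|t|\le T^{-1}$ must be discarded separately using the Weyl bound; this should be said explicitly rather than left implicit.
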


When working on the spectrum (i.e $\Re s = d/2$), it is a difficult problem to obtain asymptotics for $\mu^h_i(d/2 + i/h)$ without averaging on the spectrum. It is an open problem how close to the spectrum one can get without some averaging; it is also not easy to make a reasonable and precise conjecture on the behaviour of those Wigner measures when $s$ gets closer to the axis without being on it. This can be contrasted with the convex cocompact case --- replacing cusps by funnels --- for which results have recently appeared --- see \cite{MR3188066} and \cite{MR3215926}.

The only surface, as far as we know, for which such results have been obtained is the \emph{modular surface}. In 2000, Luo and Sarnak proved in \cite{MR1361757} that 
\begin{equation}\label{eq:Luo_Sarnak}
\mu^h\left(\frac{1}{2} + \frac{i}{h} \right) \sim \frac{6}{\pi} |\log h| \mathscr{L}_1.
\end{equation} 
Recently \cite{2011arXiv1111.6615P}, Petridis, Raulf and Risager extended this to
\begin{equation}\label{eq:Petridis_Raulf_Risager}
\mu^h\left(\frac{1}{2} + \eta + \frac{i}{h} \right) \sim \frac{6}{\pi} |\log h| \mathscr{L}_1.
\end{equation} 
as long as $\eta |\log h| \to 0$. However, the case of the modular surface, for which most of the spectrum is discrete, is very particular. There is no reason to expect that \eqref{eq:Luo_Sarnak} and \eqref{eq:Petridis_Raulf_Risager} should hold in general for other surfaces. \\

The main tool that enables us to improve Dyatlov's theorem is a long time Egorov lemma. Let $p$ be the principal symbol of the Laplacian, that is, the metric $p(x,\xi) = |\xi|^2_x$.
\begin{theorem}
Let $\sigma \in S$ be supported in a set where $p$ is bounded. There is a symbol $\sigma_t$ in an exotic class such that for $|t| \leq C |\log h|$
\begin{equation*}
e^{-ith \Delta}\Op(\sigma) e^{ith\Delta} = \Op(\sigma_t) + \mathcal{O}( h^\infty).
\end{equation*}
This holds as long as $C>0$ remains smaller than an explicit constant $C_{max}$.
\end{theorem}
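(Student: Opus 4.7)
The plan is to construct $\sigma_t$ as an $h$-asymptotic series that formally solves a transport equation along the geodesic flow, and then to Borel-sum it. Setting $P := -h^2\Delta$ (principal symbol $p$) and $A_t := e^{-ith\Delta}\Op(\sigma)e^{ith\Delta}$, the Heisenberg equation reads $\frac{d}{dt}A_t = \frac{i}{h}[P, A_t]$. Expanding the commutator in the calculus $\Psi(M)$, if we want $A_t = \Op(\sigma_t) + \mathcal{O}(h^\infty)$, then $\sigma_t$ should satisfy the formal equation
\begin{equation*}
\partial_t \sigma_t = \{p, \sigma_t\} + \sum_{k \geq 1} h^{2k} L_k(\sigma_t),
\end{equation*}
where each $L_k$ is a differential operator of order $2k+1$ in $(x,\xi)$. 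I would solve this order by order, writing $\sigma_t \sim \sum_k h^{2k}\sigma_t^{(k)}$ with $\sigma_t^{(0)} = \sigma \circ \phi_t$ (where $\phi_t$ is the Hamiltonian flow of $p$, i.e.\ the geodesic flow up to reparametrization), and with $\sigma_t^{(k)}$ given by Duhamel against $\sum_{0 \leq j < k} L_{k-j}(\sigma_t^{(j)})$.

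The key dynamical input is the Anosov structure of the geodesic flow on a negatively curved cusp manifold. Restricted to the bounded energy shell supporting $\sigma$, the differential $d\phi_t$ expands and contracts tangent vectors at rate controlled by a constant $\lambda_{\max} > 0$ (in the hyperbolic cusps this is exactly $1$). This yields $\|\partial^\alpha \partial^\beta(\sigma \circ \phi_t)\|_\infty \lesssim e^{(|\alpha|+|\beta|)\lambda_{\max}|t|}$, which for $|t| \leq C|\log h|$ is bounded by $h^{-C\lambda_{\max}(|\alpha|+|\beta|)}$. Iterating the Duhamel formula, each correction $\sigma_t^{(k)}$ lives in an exotic class on $T^\ast M$ in which each phase-space derivative costs a factor $h^{-C\lambda_{\max}}$.

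A parallel issue is that bounded-speed geodesics can ascend to height $y \sim e^{|t|} \sim h^{-C}$ inside a cusp, so the spatial support of $\sigma_t^{(k)}$ can extend very far into the cusp. The class $S(M)$ of Section 1 therefore has to be refined into a cusp-adapted exotic class $S_{C\lambda_{\max}}(M)$ that is still closed under composition in $\Psi(M)$ and satisfies a Calder\'on--Vaillancourt type $L^2$ bound. The condition $C\lambda_{\max} < 1/2$ is what makes this exotic calculus work, and this is the origin of the explicit threshold $C_{\max} \leq 1/(2\lambda_{\max})$. With this in hand, a standard Borel summation yields $\sigma_t \in S_{C\lambda_{\max}}(M)$ such that $\Op(\sigma_t)$ differs from $A_t$ by $\mathcal{O}(h^\infty)$ uniformly for $|t| \leq C|\log h|$, by comparing the two sides of the Heisenberg equation satisfied by both.

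The main obstacle is packaging these two ingredients --- Anosov derivative growth and cusp-adapted exotic calculus --- in a way that is uniform in $t$ up to $C|\log h|$. Ordinary Egorov on compact Anosov manifolds handles the first, and the usual exotic pseudodifferential calculus handles the second, but neither addresses the unbounded excursions into the cusps or the degeneration of the symbol class at infinity. Proving uniform derivative estimates on $\sigma \circ \phi_t$ across the transition from the compact part to the cusps (where the vertical direction is stable/unstable under the hyperbolic flow), while keeping $\sigma_t$ inside the cusp-adapted class of Section 1, is where the bulk of the technical work will lie.
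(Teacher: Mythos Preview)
Your approach is essentially the paper's: iterative Duhamel construction starting from $\sigma\circ\varphi_t$, derivative growth controlled by the maximal Lyapunov exponent $\lambda_{\max}$, and an exotic class with loss $h^{-\rho}$ per derivative where $\rho=C\lambda<1/2$ gives the threshold $C_{\max}$. Two remarks are worth making.

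First, you invoke the Anosov structure and negative curvature, but the Egorov lemma in the paper needs neither. All that is used is that $\lambda_{\max}(E)$ is finite on a bounded energy shell, which follows from bounded curvature of $M$ (via Jacobi fields / Rauch comparison); the negative curvature hypothesis appears only later, in the equidistribution argument for the Eisenstein measures.

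Second, you correctly flag the cusp uniformity as the main technical obstacle but do not say how to resolve it. The paper's answer is to work intrinsically: equip $T^\ast M$ with the Sasaki metric, show (Appendix~C) that on $\{p\leq E\}$ the Sasaki $\mathscr{C}^k$ norms are equivalent to the hyperbolic symbol seminorms built from $X_y, X_\theta, X_Y, X_J$, and then prove (Appendix~B) a covariant estimate $\|f\circ\varphi_t\|_{\mathscr{C}^n}\leq C_n e^{n\lambda|t|}\|f\|_{\mathscr{C}^n}$ valid on any manifold with bounded curvature tensor and derivatives. This sidesteps the issue of geodesics running up the cusp: the excursion to height $y\sim e^{|t|}$ is invisible in the intrinsic $\mathscr{C}^k$ norm, which is exactly what makes the exotic class $S^E_\lambda$ close under the flow without any separate bookkeeping for spatial support.
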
 
As in most situations, the proof of our Egorov lemma is relatively easy once the settings, and in particular, the relevant properties of the quantization have been established. Let us explain; apart from some abstract nonsense, the analysis in Egorov lemma is contained in an estimate of the derivatives of the flow. In the compact case, the choice of how those derivatives are estimated is not important. However, in a non-compact case, one has to use norms consistent with the geometry of the problem, and those norms determine the class of symbols to use.

One way to avoid those problems is to use a compactly supported quantization and operators. This is adapted in cases where the whole interesting part of the underlying dynamics takes place in a compact set (see for example \cite{MR3215926}). However in our case, a crucial part of the dynamics happens in the cusps, so we wanted to allow for symbols supported in the cusps, and we had to give a treatment for the ends.

The \emph{cusp}-calculus of Melrose sees the cusp as conformal to a Euclidean cylinder (see \cite{MR1734130}), and the corresponding class of symbols has a nice behavior with respect to the euclidean dynamics on a half cylinder with boundary, considering the point at infinity of a cusp as a circle boundary. Another description we found was in \cite{2014arXiv1405.2126B}. For symbols only depending on the vertical variables in the cusps, this is probably easier to manipulate. However it does not allow for flexible behavior on the $\theta$ variable. Let us observe however that \cite{2014arXiv1405.2126B} deals with cusps that are much more general than the ones we consider.

The only closely related description of pseudo-differential operators we found was \cite{MR850155} by Zelditch. In this paper, he develops a quantization procedure for the hyperbolic plane, using Fourier-Helgason transform on the unit disk. Then, he shows that those operators can be symmetrized to operate on compact hyperbolic surfaces. In some sense, we use \emph{the} class of symbols that are compatible with the metric, and that class is similar to that introduced by Zelditch. However, the quantization procedure is different: we use only euclidean Fourier transform to build operators specifically on cusps --- see \eqref{eq:Weyl-quantization-Rd}. We prove composition stability without any proper support assumption. This is available in any dimension $\geq 2$, in a semi-classical formulation. We also state usual theorems on pseudors, including $L^2$ bounds, and a trace formula. We detailed the proofs, with elementary tools, only referring to \cite{MR2952218}.

This will be part of the author's PhD thesis.

\textbf{Acknowledgement}. We thank Nalini Anantharaman and Colin Guillarmou for their fruitful and extensive advice. We thank Semyon Dyatlov for his suggestions. We thank Barbara Schapira for her explanations on dynamical matters.

\section{Quantizing in a full cusp}

\subsection{Symbols}

Let $Z_\Lambda$ be a full cusp. That is
\begin{equation*}
Z_{\Lambda} = \R^+ \times \R^d/\Lambda,
\end{equation*}
where $\Lambda$ is some lattice in $\R^d$. The first coordinate, $y$ is called the \emph{height} ; the second is denoted by $\theta$, and we write $x=(y,\theta) = (y, \theta_1, \dots, \theta_d)$. The cusp is endowed with a cusp metric :
\begin{equation*}
 d s^2 = \frac{ d y^2 +  d \theta^2}{y^2},
\end{equation*}
where $ d \theta^2 = d\theta_1^2 + \dots + d \theta_d^2$ is the canonical flat metric on $\R^d$. By rescaling, we see that $Z_{\Lambda}$ and $Z_{t\Lambda}$ are isometric whenever $t>0$, so we assume that $\Lambda$ has covolume $1$. In the first part of the article, we will write just $Z$ for $Z_\Lambda$ because $\Lambda$ will not change. The Riemannian volume in $Z$ is 
\begin{equation*}
d \mathrm{vol}(x) = \frac{ d y  d \theta^d}{y^{d+1}}.
\end{equation*}
We refer to the space of square integrable functions with respect to this volume as $L^2(Z)$. The Laplacian is
\begin{equation*}
\Delta = y^2 \Delta_{\mathrm{eucl}} - (d-1) y \frac{\partial}{\partial y},
\end{equation*}
where $\Delta_{\mathrm{eucl}}$ is the Laplacian for the Euclidean cylinder. On the cotangent bundle $T^\ast Z$, we let $Y$ and $J$ be the dual coordinates to $\partial_y$ and $\partial_\theta$, with $\xi = Ydy + Jd\theta$ ($J$ is a vector in $\R^d$). We also write $\xi= (Y, J)=(Y, J_1, \dots, J_d)$. The usual Poisson Bracket on $T^\ast Z$ writes
\begin{equation*}
\{ f, g \} = \partial_Y f \partial_y g - \partial_y f \partial_Y g + \nabla_J f . \nabla_\theta g - \nabla_\theta f . \nabla_J g \text{ with $\nabla$ the usual flat connexion on $\R^n$.}
\end{equation*}
The riemmannian metric on $Z$ gives an isomorphism between $T Z$ and $T^\ast Z$, and $T^\ast Z$ is thus endowed with the metric
\begin{equation*}
| \xi |^2_x =  y^2 (Y^2 + |J|^2).
\end{equation*}

In appendix \ref{appendix:functionnal_spaces_cusp}, we recall how to define the spaces $\mathscr{C}^n(Z)$ of functions using covariant derivatives. This definition is intrinsic of the metric, however it is not very practical for computations. Let 
\begin{equation*}
X_y := y\partial_y \quad X_{\theta_i} := y \partial_{\theta_i}.
\end{equation*}
If $\alpha= (\alpha_1, \dots, \alpha_k)$ is a sequence with $\alpha_i \in \{ y, \theta_1, \dots, \theta_d\}$ --- we say a \emph{space-index of length k} --- we denote $X_{\alpha_1} \dots X_{\alpha_k}$ by $X_\alpha$, and $|\alpha|=k$ is the length. Then we prove in \eqref{eq:equivalent_norms_C^n} that 
\begin{equation*}
\| . \|_{\mathscr{C}^n(Z)} \text{ and }\sum_{|\alpha| \leq n} \| X_\alpha f \|_{L^\infty(Z)} \text{ are equivalent norms.}
\end{equation*}
Since $[X_y, X_\theta] = X_\theta$, the order in which the derivatives are taken has little importance.

\begin{definition}
We call \emph{coefficients} the elements of $\mathscr{C}^\infty_b(Z)= \cap_{n\geq 0} \mathscr{C}^n(Z)$, that is, smooth functions $f$ on $Z$ such that for any space-index $\alpha$, 
\begin{equation*}
\| X_{\alpha} f\|_{L^\infty (Z)} < \infty.
\end{equation*}
Elements of  $\mathscr{C}^\infty_{b,h}(Z) := C^\infty(\R^+ \to \mathscr{C}^\infty_b(Z))$ are called \emph{(semi-classical)} coefficients. 
\end{definition}

Both $\mathscr{C}^\infty_b(Z)$ and $\mathscr{C}^\infty_{b,h}(Z)$ have a natural ring structure.
\begin{lemma}
The module generated by $X_y$ and $X_\theta$'s over the coefficients $\mathscr{C}^\infty_b(Z)$ is a Lie algebra.
\end{lemma}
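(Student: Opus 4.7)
The plan is to reduce everything to the bracket of pairs of generators, using the standard formula for Lie brackets of vector fields multiplied by functions. Concretely, a generic element of the module has the form $V = f_0 X_y + \sum_{i=1}^d f_i X_{\theta_i}$ with $f_0, f_1, \dots, f_d \in \mathscr{C}^\infty_b(Z)$, so by bilinearity and antisymmetry of the bracket it is enough to check that $[fX_\alpha, g X_\beta]$ lies in the module for every $f,g \in \mathscr{C}^\infty_b(Z)$ and every $\alpha, \beta \in \{y, \theta_1, \dots, \theta_d\}$.

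First I would expand
\begin{equation*}
[fX_\alpha, gX_\beta] = f (X_\alpha g)\, X_\beta \;-\; g (X_\beta f)\, X_\alpha \;+\; fg\, [X_\alpha, X_\beta].
\end{equation*}
The first two terms are already of the desired form, provided that $\mathscr{C}^\infty_b(Z)$ is stable under the action of the generators $X_\alpha$ and stable under multiplication. Stability under multiplication is the ring structure of $\mathscr{C}^\infty_b(Z)$ noted just before the statement, and stability under $X_\alpha$ is immediate from the definition of the coefficients in terms of uniform bounds on all iterated derivatives $X_\beta X_\alpha g$.

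It remains to treat the last term, which is where the structure of the generators enters. A direct computation on the vector fields $X_y = y\partial_y$ and $X_{\theta_i} = y\partial_{\theta_i}$ gives
\begin{equation*}
[X_y, X_{\theta_i}] = X_{\theta_i}, \qquad [X_{\theta_i}, X_{\theta_j}] = 0, \qquad [X_y,X_y] = 0,
\end{equation*}
so $[X_\alpha, X_\beta]$ is in the real span of the generators, and hence $fg\,[X_\alpha, X_\beta]$ is in the module. Combined with the previous observation this shows the bracket of any two elements of the module lies in the module, proving it is a Lie algebra.

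There is no real obstacle here; the only substantial point is the explicit bracket relation, which is the identity the authors highlight right before stating the lemma, and which makes the module behave like an enveloping object with the commutation relations of the affine Lie algebra of the half-space model.
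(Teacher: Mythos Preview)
Your proof is correct and follows essentially the same route as the paper's: reduce by bilinearity to brackets of the form $[fX_\alpha, gX_\beta]$, expand with the Leibniz identity, and use the relation $[X_y,X_{\theta_i}]=X_{\theta_i}$ (with the other brackets vanishing) together with the ring structure and $X_\alpha$-stability of $\mathscr{C}^\infty_b(Z)$. The paper merely records the single nontrivial case $[aX_y,bX_{\theta_i}]=(ab+aX_yb)X_{\theta_i}-(bX_{\theta_i}a)X_y$, leaving the rest implicit; your write-up is a slightly more detailed version of the same argument.
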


\begin{proof}
It suffices to consider the behavior of $[ a X_y, b X_{\theta_i}]$ with $a$, $b$ in $\mathscr{C}^\infty_b(Z)$ :
\begin{equation*}
[ a X_y, b X_{\theta_i}] = (ab + a X_y b) X_{\theta_i} - (b X_{\theta_i} a) X_y
\end{equation*}
\end{proof}

If we consider $hX_y$ and $hX_{\theta_i}$'s as vector fields on $Z$ with a parameter $h\geq 0$, we deduce that the module they generate over $\mathscr{C}^\infty_{b,h}(Z)$ is also a Lie algebra. Hence it makes sense to speak of its universal envelopping algebra $\mathcal{V}(Z)$. This is an algebra of \emph{semi-classical differential operators} on $Z$. From now on, all differential operators we use will be in $\mathcal{V}(Z)$. 

Inside of $\mathcal{V}(Z)$, we can consider the subalgebra generated by $hX_y$ and $hX_{\theta_i}$'s over $\C$; those are the \emph{constant coefficients} differential operators.

\begin{prop}
The elements of $\mathcal{V}(Z)$ can be decomposed as finite sums of the type
\begin{equation*}
\sum_{i\geq 0, \alpha}  h^{|\alpha|+i} a_{i,\alpha}(h,x) X_\alpha
\end{equation*}
with $a_{i,\alpha}$'s in $\mathscr{C}^\infty_{b,h}(Z)$.
\end{prop}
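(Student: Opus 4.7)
The plan is a standard Poincaré–Birkhoff–Witt–style normal form argument, by induction on the length of a product of generators. By $\C$-linearity, it suffices to treat a single finite product of generators, where each generator is either a coefficient $b \in \mathscr{C}^\infty_{b,h}(Z)$ (acting by multiplication) or one of the vector fields $hX_y, hX_{\theta_1}, \ldots, hX_{\theta_d}$. The base cases are immediate: the identity and a single coefficient $a$ are of the form $h^0 \cdot a \cdot X_\emptyset$, while a single vector field $hX_\gamma$ is $h^1 \cdot 1 \cdot X_\gamma$; both fit the claimed normal form.

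For the inductive step, suppose a product $P$ of length $n$ has already been rewritten as $P = \sum h^{|\alpha|+i} a_{i,\alpha} X_\alpha$ in normal form, and consider $G \cdot P$ for one additional generator $G$. If $G = b$ is a coefficient, then $b \cdot P = \sum h^{|\alpha|+i}(b\, a_{i,\alpha}) X_\alpha$, still in normal form since $\mathscr{C}^\infty_{b,h}(Z)$ is a ring. If instead $G = hX_\gamma$, the Leibniz rule $hX_\gamma \cdot a_{i,\alpha} = a_{i,\alpha} \cdot hX_\gamma + h (X_\gamma a_{i,\alpha})$ gives
\begin{equation*}
hX_\gamma \cdot a_{i,\alpha} X_\alpha \;=\; h\, a_{i,\alpha}\, X_{\gamma\alpha} \;+\; h\, (X_\gamma a_{i,\alpha})\, X_\alpha,
\end{equation*}
where $\gamma\alpha$ denotes the space-index of length $|\alpha|+1$ obtained by prepending $\gamma$. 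Multiplying by the outer $h^{|\alpha|+i}$ and summing yields
\begin{equation*}
hX_\gamma \cdot P \;=\; \sum h^{|\gamma\alpha|+i}\, a_{i,\alpha}\, X_{\gamma\alpha} \;+\; \sum h^{|\alpha|+(i+1)}\, (X_\gamma a_{i,\alpha})\, X_\alpha,
\end{equation*}
so both summands remain in normal form, with new index pairs $(i,\gamma\alpha)$ and $(i+1,\alpha)$ respectively, and the induction closes.

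The only ingredient used in the commutation step is that $X_\gamma a_{i,\alpha}$ is again a coefficient, which is precisely the defining property of $\mathscr{C}^\infty_b(Z)$: any finite composition of $X_y, X_{\theta_j}$ leaves the bounded class invariant. The preceding lemma (that the module generated by the $hX_\gamma$'s over the coefficients is already a Lie algebra) is what guarantees that this construction is consistent inside $\mathcal{V}(Z)$. I do not foresee any genuine obstacle beyond the bookkeeping of $h$-exponents: each commutation consumes one vector-field factor and releases one extra $h$, which is absorbed into the index $i$ that effectively counts the total number of commutations carried out along the way.
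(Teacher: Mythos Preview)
Your argument is correct. The paper states this proposition without proof, presumably because it is a routine consequence of the preceding algebraic setup (the ring structure on $\mathscr{C}^\infty_{b,h}(Z)$ and the Lie-algebra lemma). Your induction on word length, together with the Leibniz commutation $hX_\gamma \cdot a = a\cdot hX_\gamma + h(X_\gamma a)$, is exactly the standard PBW-type normalisation the paper has in mind, and the bookkeeping of $h$-exponents is done correctly: the extra $h$ released by each commutation is absorbed into the index $i$.
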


We define :
\begin{equation*}
P = -\frac{h^2}{2}\Delta.
\end{equation*}
It is in $\mathcal{V}(Z)$ and in some sense, $\mathcal{V}(Z)$ has been taylored to satisfy this property. Actually, $P$ is a constant coefficient operator: 
\begin{equation*}
\Delta = X_y^2 - d \times X_y + X_{\theta_1}^2 + \dots + X_{\theta_d}^2.
\end{equation*}

Using the algebraic properties described above, one can prove:
\begin{prop}\label{prop:semi-classical-symbol_differential_operator}
Let $A\in \mathcal{V}(Z)$, and $(x,\xi)\in T^\ast Z$. Let $\phi$ be a smooth function on $Z$, with $\phi(x) = 0$ and $ d  \phi(x) = \xi$. Then we let
\begin{equation*}
\sigma^0(A)(x,\xi) =\lim_{h\to 0} A_h(e^{ i  \phi /h}) (x).
\end{equation*}
This limit exists and does not depend on the choice of $\phi$. It is called the \emph{principal symbol} of $A$. It is a polynomial in the $\xi$ variable, smoothly depending on $x$ and $h$. What is more, its monomials are of the form $a(x)y^{k+l} Y^k J^l$ where $a\in C^\infty_b(Z)$.

The mapping $\sigma^0$ from $\mathcal{V}(Z)$ to functions on $T^\ast Z$ is linear. What is more, if $A, B$ are in $\mathcal{V}(Z)$, 
\begin{gather*}
\sigma^0(AB) = \sigma^0(A) \sigma^0(B) \quad \text{ and } \quad \sigma^0 \left( \frac{1}{h} [ A, B] \right) =  \left\{\sigma^0(A), \sigma^0(B) \right\} \\
\sigma^0 \left(\frac{h}{i}X_y \right)= yY  \quad \text{ and } \quad \sigma^0\left( \frac{h}{i} X_{\theta_i} \right) = y J_i
\end{gather*}
\end{prop}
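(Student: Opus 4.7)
My plan is to compute $\sigma^0$ first on the monomial generators of $\mathcal{V}(Z)$, then extend by linearity and Leibniz. The basic computation is that $hX_y(e^{i\phi/h})=iy\,\partial_y\phi\cdot e^{i\phi/h}$ and $hX_{\theta_i}(e^{i\phi/h})=iy\,\partial_{\theta_i}\phi\cdot e^{i\phi/h}$. Iterating by induction on $|\alpha|$, I would prove
\[
h^{|\alpha|}X_\alpha\bigl(e^{i\phi/h}\bigr)(y)=\Bigl[\prod_{j=1}^{|\alpha|} iy\,\partial_{\alpha_j}\phi(y)\Bigr]e^{i\phi(y)/h}+h\,R_\alpha(y;\phi,h)\,e^{i\phi(y)/h},
\]
with $R_\alpha$ smooth and polynomial in $h$. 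The induction is forced by the observation that every time $hX=hy\,\partial$ acts on a product, the derivative either hits the phase (producing $iy\,\partial\phi$ with no extra $h$) or hits the polynomial factor (paying an explicit factor of $h$). Evaluating at $y=x$ with $\phi(x)=0$ and $d\phi(x)=\xi$ sends the remainder to $0$ and leaves $i^{|\alpha|}y^{|\alpha|}Y^{k_y}J_1^{k_{\theta_1}}\cdots J_d^{k_{\theta_d}}$, where $k_y,k_{\theta_j}$ count occurrences of each generator in $\alpha$.

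Combined with the decomposition $A=\sum h^{|\alpha|+i}a_{i,\alpha}(h,x)X_\alpha$ supplied by the previous proposition, only the $i=0$ terms survive the limit, which at once yields existence of $\sigma^0(A)$, the claimed polynomial form $a(x)y^{k+l}Y^kJ^l$, the explicit values $\sigma^0(\tfrac{h}{i}X_y)=yY$ and $\sigma^0(\tfrac{h}{i}X_{\theta_i})=yJ_i$, and independence of the choice of $\phi$ (the formula involves $\phi$ only through $d\phi(x)$). Linearity of $\sigma^0$ is immediate.

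For multiplicativity I would write $B(e^{i\phi/h})(y)=b_h(y)\,e^{i\phi(y)/h}$, where the same inductive expansion shows $b_h$ is smooth and $b_h(x)\to\sigma^0(B)(x,\xi)$, indeed in $C^\infty$ near $x$. Expanding $A(b_h\,e^{i\phi/h})(x)$ by Leibniz, any $hX$ that falls on $b_h$ costs one extra power of $h$ compared with the same $hX$ falling on the phase, so the only $h^0$-contribution is $\sigma^0(A)(x,\xi)\,b_h(x)\to\sigma^0(A)\sigma^0(B)(x,\xi)$.

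The commutator/Poisson-bracket identity is the step I expect to demand the most care. It reduces to checking on pairs of generators via the commutator Leibniz rule $[UV,W]=U[V,W]+[U,W]V$ together with its Poisson analogue $\{fg,h\}=f\{g,h\}+\{f,h\}g$ and the multiplicativity already established. On the generators, the only nontrivial Lie bracket is $[X_y,X_{\theta_i}]=X_{\theta_i}$, so $\tfrac{1}{h}[hX_y,hX_{\theta_i}]=hX_{\theta_i}$, whose principal symbol by the first step matches the Poisson bracket of $\sigma^0(hX_y)$ and $\sigma^0(hX_{\theta_i})$ as computed from the explicit formula for $\{\cdot,\cdot\}$ given in the paper; all remaining generator brackets vanish and the identity holds trivially. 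The main obstacle is the consistent bookkeeping of the factors of $i$ (and the powers of $h$ that appear when expanding commutators of non-commuting generators), but the structural argument itself is forced by the Lie relations and costs no additional geometry.
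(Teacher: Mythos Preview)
Your approach is exactly what the paper intends: the paper gives no proof beyond the sentence ``using the algebraic properties described above, one can prove'', and your inductive computation of $h^{|\alpha|}X_\alpha(e^{i\phi/h})$, the passage through the decomposition $A=\sum h^{|\alpha|+i}a_{i,\alpha}X_\alpha$, and the Leibniz argument for multiplicativity are all correct and standard.

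Two small points on the commutator step. First, your reduction via the Leibniz rules for $[\,\cdot\,,\,\cdot\,]$ and $\{\,\cdot\,,\,\cdot\,\}$ is right, but the base case must include not only pairs of vector-field generators but also pairs where one factor is a coefficient $a\in\mathscr C^\infty_{b,h}(Z)$: one has $\tfrac1h[a,hX_j]=-X_j a$ as a multiplication operator, and this must be matched against $\{a,\sigma^0(hX_j)\}$. Second, your claim that the generator computation ``matches'' does not survive the bookkeeping you flag at the end: with the paper's convention $\{f,g\}=\partial_Y f\,\partial_y g-\partial_y f\,\partial_Y g+\cdots$ one gets $\{iyY,iyJ_i\}=-yJ_i$, while $\sigma^0(\tfrac1h[hX_y,hX_{\theta_i}])=\sigma^0(hX_{\theta_i})=iyJ_i$. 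The identity as printed is therefore off by a factor of $1/i$; the correct normalization is $\sigma^0(\tfrac{i}{h}[A,B])=\{\sigma^0(A),\sigma^0(B)\}$, which is the standard semiclassical formula. Your structural argument is fine; just be aware that when you actually carry out the generator check it will force this correction rather than confirm the statement verbatim.
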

We deduce that the principal symbol of $P$ is the metric 
\begin{equation}\label{eq:symbol_laplacian}
p=|\xi |^2_x/2.
\end{equation}

If we consider the set of functions $S_{\mathcal{V}}$ on $T^\ast Z$ obtained by taking principal symbols of differential operators, we see that it is stable by the action of the vectors $X_y$ and $X_{\theta_i}$'s. In the $\xi$ direction, we introduce
\begin{equation*}
X_Y := \frac{1}{y} \partial_Y \quad X_{J_i} := \frac{1}{y}\partial_{J_i}.
\end{equation*}
Then $X_Y$ and $X_{J_i}$ also stabilize $S_\mathcal{V}$. Let us refine this description. $S_\mathcal{V}$ is a graded algebra decomposing as $S_\mathcal{V} = \cup_{n\geq -\infty} S^n_\mathcal{V}$, where $S^n_\mathcal{V}$ is the set of $q$'s of degree lesser than $n$. Then $X_y$ and $X_\theta$'s map $S^n_\mathcal{V}$ to itself while $X_Y$ and $X_J$'s map $S^n_{\mathcal{V}}$ to $S^{n-1}_\mathcal{V}$. Also remark that whenever $q\in S^n_\mathcal{V}$, $q = \mathcal{O}(\langle y \xi \rangle^n)$ where $\langle  x\rangle$ is the usual bracket $\sqrt{1+ x^2}$.

The above motivates the introduction of the following class of symbols:
\begin{definition}
Let $\sigma$ be a smooth function on $T^\ast Z$. We say that $\sigma$ is a \emph{(hyperbolic) symbol} on $Z$ of order $n \in \R$ if for any finite sequence $\{ \alpha_k \}$ with $\alpha_k \in \{ y,\theta_{1\dots d}, Y, J_{1\dots d}\}$, if $(\alpha)$ is the number of $Y$'s and $J$'s in the sequence,
\begin{equation}\label{eq:symbolic-estimate}
q_{n,\alpha} := \sup_{T^\ast X_i} | X_{\alpha_1} \dots X_{\alpha_k} a(x, \xi) | \leq C \langle y\xi \rangle^{n-(\alpha)}.
\end{equation}

We let $S(Z)$ be the set of symbols, $S^n(Z)$ be the set of symbols of order $n$, and $S^{-\infty} = \cap_{n\in \R} S^n(Z)$. $S$ is graded by the order, and
\begin{align*}
S^n_\mathcal{V} \subset & S^n(Z) \\
X_y, X_{\theta_i} : & S^n(Z) \to S^n(Z) \\
X_Y, X_{J_i} : & S^n(Z) \to S^{n-1}(Z) \\
\sigma \in S^m, \mu \in S^n & \Rightarrow \{ \sigma, \mu \} \in S^{m+n-1}
\end{align*}

The family of semi-norms $q_{n, \alpha}$ gives a structure of metric space to $S^n$.
\end{definition}

We have not specified an $h$-dependency. Actually, we will need to let symbols depend on $h$ in a slightly rough fashion, so we use the classes :
\begin{definition}
Let $0 \leq \rho < 1/2$. Consider complex functions $\sigma$ of $h>0$ and $(x,\xi)\in T^\ast Z$ such that for fixed $h$, $\sigma_h$ is in $S^n$. If $\alpha$ is a sequence of indices, let $|\alpha|$ be its length. Assume that $\sigma$ additionnally satisfy the family of estimates
\begin{equation*}
q_{n,\alpha,\rho} := \sup_{h} h^{\rho |\alpha|} q_{n, \alpha}(\sigma_h) < \infty, \text{ for $\alpha$ finite sequence of }y,\theta_i,Y,J_i.
\end{equation*}
Then we say that $\sigma$ is an exotic symbol of order $(n,\rho)$, and write $\sigma \in S^n_\rho(Z)$. We also define $S_\rho (Z) = \cup S^n_\rho (Z)$ and $S^{-\infty}_\rho (Z) = \cap S^n_\rho (Z)$. We have:
\begin{align*}
S^n_\mathcal{V} \subset & S^n(Z) \\
h^\rho X_y, h^\rho X_{\theta_i} : & S^n(Z) \to S^n(Z) \\
h^\rho X_Y, h^\rho X_{J_i} : & S^n(Z) \to S^{n-1}(Z) \\
\sigma \in S^m, \mu \in S^n & \Rightarrow h^{2\rho}\{ \sigma, \mu \} \in S^{m+n-1}
\end{align*}
The semi-norms $q_{n,\alpha,\rho}$ also give a structure of metric space to $S^n_\rho$.
\end{definition}

The rest of section 1 is devoted to describing a quantization procedure for this algebra of symbols. In section 1.2, we give a definition, and prove that we obtain pseudo-differential operators in the usual sense. Then, in section 1.3, we give a stationary phase lemma. We use it to prove usual properties of the quantization in section 1.4.

\subsection{Quantizing symbols.}

After we give a quantization procedure $\Op$ in \ref{definition:Quantization} for $\sigma \in S_\rho$, we first prove that the operators we obtain have pseudo-differential behaviour locally. That is, if $\gamma$ is a diffeomorphism from a relatively compact open set $U$ in $\R^n$ onto its image in $Z$, the pullback
\begin{equation*}
(\gamma^\ast \Op(\sigma)) f(x) = \left[\Op(\sigma)(f\circ \gamma) \right] ( \gamma(x)).
\end{equation*}
is a pseudo-differential operator in $U$. Then we prove (lemma \ref{prop:support_around_y-diagonal}) that $\Op$ is \emph{pseudo-local} in the following sense : if $\phi_1$ and $\phi_2$ are two coefficients on $Z$ not depending on $\theta$, with disjoint support, 
\begin{equation*}
\phi_1 \Op(\sigma) \phi_2 = O_{H^{-n} \to H^n}( h^\infty) \quad \text{ for every }n\in \N.
\end{equation*}
The Sobolev spaces on cusps are defined in appendix \ref{appendix:functionnal_spaces_cusp}. To get to \ref{prop:support_around_y-diagonal}, we have to prove that our operators have \emph{some} Sobolev regularity --- see proposition \ref{prop:Sobolev_boundedness}. This is deduced, with a usual parametrix argument, of the crucial lemma \ref{lemma:roughL2boundedness} about regularity on $L^2(Z)$. The regularity we prove in this section is certainly not optimal, and we will get better statements --- see proposition \ref{prop:optimal_L^2_regularity} --- once we obtain stability by composition.

For convenience, we will use some expressions in the half-space $\Hh^{d+1} = \R^+\times\R^d$, which is the universal cover of $Z$. If $a$ is some function on $Z$ (resp. $T^\ast Z$), we identify it with its unique lift to $\Hh^{d+1}$ (resp. $T^\ast \Hh^{d+1}$). We denote by $\mathbf{Op}_h^w$ the \emph{usual Weyl quantization} of a symbol on $\R^{d+1}$ :
\begin{equation}\label{eq:Weyl-quantization-Rd}
\mathbf{Op}_h^w(\eta)f = \frac{1}{(2\pi h)^{d+1}}\int_{\R^{d+1}\times\R^{d+1}}e^{ i  \langle x-x',\xi\rangle/h} \eta\left( \frac{x+x'}{2}, \xi\right) f(x')  d x'  d \xi.
\end{equation}
If $\eta$ and $f$ are only defined on the upper half space, this also makes sense (continuing all functions by $0$ in the lower half space). Now, assume that $\eta$ is actually a symbol of order $n$, and $f$ is a smooth compactly supported function on $Z$ --- that is a periodic function in $\theta$ in $\Hh^{d+1}$, with compact support in $y$. Then, in each fiber of $T^\ast Z$, $\eta$ is a symbol in the usual sense of $\R^n$, with uniform estimates as long as $y$ stays in a compact set of $\R^{+\ast}$. We deduce that the Fourier transform of $\eta$ fiberwise is a distribution in $\mathscr{S}'$, whose singular support is $\{0\}$ and with fast decay at infinity, with estimates uniform in $x$ as long as $y$ stays in a compact set. We deduce that $\mathbf{Op}_h^w(\eta)f$ is well defined. Actually, applying a finite number of $X_y$'s and $X_\theta$'s, we can repeat the argument and observe it is also a smooth function. A similar argument will be detailed in the proof of lemma \ref{lemma:roughL2boundedness}. Now, we can give our basic definition :
\begin{definition}\label{definition:Quantization}
Let $\sigma\in S(Z)$ be some hyperbolic symbol on $T^\ast Z$. Then, for any $f\in C^\infty_c (Z)$, we let
\begin{equation*}
\Op_h(\sigma)f(y,\theta) = y^{\frac{d+1}{2}} \mathbf{Op}_h^w(\lift{\sigma}) \left(\frac{1}{y^{\frac{d+1}{2}}}f\right).
\end{equation*}
This is seen to be a $\Lambda$-periodic function in the $\theta$ direction. $\Op_h(\sigma)$ defines an operator $C^\infty_c(Z) \to C^\infty(Z)$. When $\sigma$ is a symbol, $\Op(\sigma)$ denotes the family $\{ \Op_h(\sigma)\}_h$. 

We let $\Psi_\rho^n$ be the set of $\{ \Op(\sigma) | \sigma \in S_\rho^n\}$.
\end{definition}

The introduction of $(y/y')^{(d+1)/2}$ corresponds the conjugacy by the unitary map $f \to y^{(d+1)/2} f$ from $L^2(dy d\theta)$ to $L^2(Z)$.

\subsubsection{The principal symbol}

In this section, we check that:

\begin{lemma}\label{lemma:Pseudo-differential_behavior}
The definition of the semi-classical principal symbol given for elements of $\mathcal{V}(Z)$ in \ref{prop:semi-classical-symbol_differential_operator} extends to operators $\Op(\sigma)$'s, and $\sigma^0(\Op(\sigma))=\sigma$.
\end{lemma}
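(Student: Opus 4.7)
The plan is to compute $\Op_h(\sigma)(e^{i\phi/h})(x_0)$ directly from Definition~\ref{definition:Quantization} and to identify the limit as $h\to 0$ by stationary phase in the oscillatory integral representing the Weyl quantization on $\R^{d+1}$. Fix $(x_0,\xi_0)\in T^\ast Z$, choose $\phi\in C^\infty(Z)$ with $\phi(x_0)=0$ and $d\phi(x_0)=\xi_0$, lifted periodically to $\Hh^{d+1}$, and insert a cutoff $\chi\in C_c^\infty(Z)$ equal to one near $x_0$ and supported inside a single fundamental domain about $x_0$.

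First I would write
\begin{equation*}
\Op_h(\sigma)(\chi e^{i\phi/h})(x_0) = \frac{y_0^{(d+1)/2}}{(2\pi h)^{d+1}}\int e^{i\Phi/h}\,\lift\sigma\Big(\tfrac{x_0+x'}{2},\xi\Big)(y')^{-(d+1)/2}\chi(x')\,dx'\,d\xi,
\end{equation*}
with phase $\Phi(x',\xi)=\langle x_0-x',\xi\rangle+\phi(x')$. The unique stationary point in $\supp\chi\times\R^{d+1}$ is $(x_0,\xi_0)$, with critical value $0$ and Hessian $\begin{pmatrix}\phi''(x_0) & -I\\ -I & 0\end{pmatrix}$ of absolute determinant $1$ and signature zero. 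Stationary phase, adapted to the exotic class $S^n_\rho$ with $\rho<1/2$ (each derivative on $\lift\sigma$ costs at most $h^{-\rho}$), then yields
\begin{equation*}
\Op_h(\sigma)(\chi e^{i\phi/h})(x_0) = y_0^{(d+1)/2}\cdot\sigma(x_0,\xi_0)\cdot y_0^{-(d+1)/2}+O(h^{1-2\rho}) = \sigma(x_0,\xi_0)+O(h^{1-2\rho}).
\end{equation*}
Since only $\sigma(x_0,d\phi(x_0))$ enters the leading term, the limit exists and is independent of the choice of $\phi$. The complementary piece $(1-\chi)e^{i\phi/h}$ contributes $O(h^\infty)$ at $x_0$ by integration by parts in $\xi$: the non-stationarity $|\partial_\xi\Phi|=|x_0-x'|$ is bounded below on $\supp(1-\chi)$, and each integration by parts gains $(h^{1-\rho}/|x_0-x'|)$ after accounting for the cost of differentiating $\lift\sigma$.

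The principal technical point is that $\lift\sigma$ is not compactly supported in $\xi$: the tail $|\xi-\xi_0|\gg 1$ is controlled by an additional localization in $\xi$ and integration by parts in $x'$, using the non-stationarity $|\partial_{x'}\Phi|=|\xi-d\phi(x')|\gtrsim|\xi|$ for $|\xi|$ large, together with the symbolic estimates~\eqref{eq:symbolic-estimate}; this is standard (cf.\ \cite{MR2952218}). No single step is deep, but one must track the $y$-weights coming from the conjugation in Definition~\ref{definition:Quantization} and the exotic $h^{-\rho}$ losses to be sure the remainder is $o(1)$ as $h\to 0$.
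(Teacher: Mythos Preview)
Your approach matches the paper's: write the oscillatory integral on $\Hh^{d+1}$, apply stationary phase near $(\tilde x_0,\xi_0)$, and kill the off-diagonal contribution by integration by parts in $\xi$. One point to tighten: since $\chi\in C^\infty_c(Z)$ lifts to a $\Lambda$-\emph{periodic} function on $\Hh^{d+1}$, your condition ``supported inside a single fundamental domain'' does not actually localize the $x'$-integral --- the integrand still sees every $\Lambda$-translate of $\supp\chi$. The paper deals with this by inserting a second, genuinely compactly supported cutoff $\tilde\chi\in C^\infty_c(\Hh^{d+1})$ around the chosen lift $\tilde x_0$; the $(1-\tilde\chi)$ piece then contains all the translates, and is shown to be $O(h^{2k(1-\rho)-d-1})$ by exactly the integration-by-parts-in-$\xi$ argument you give for your $(1-\chi)$ piece (the factor $(y+y')^{2k}/|\tilde x_0-x'|^{2k}$ is summable over translates once $2k>d$). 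With that small fix your argument is the paper's.
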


\begin{proof}
Let $\chi$ be some $C^\infty_c(Z)$ function equal to $1$ near $x\in Z$. Let $\xi_0 \in T^\ast_x Z$ and $\phi$ some smooth function on $Z$ such that $\phi(x)= 0$ and $ d \phi(x) = \xi_0$. Let $\sigma \in S_\rho$, take $\tilde{x}$ a lift of $x$ in $\Hh^{d+1}$ and integrating over $\Hh^{d+1}\times\R^{d+1}$
\begin{equation*}
\Op_h(\sigma)\left(\chi e^{ i  \phi/h}\right)(x) =\frac{1}{(2\pi h)^{d+1}} \int  e^{ i  (\langle \tilde{x}-x',\xi\rangle + \phi(x'))/h} \left(\frac{y}{y'}\right)^{(d+1)/2}\sigma\left(\frac{\tilde{x}+x'}{2}, \xi\right) \chi(x') d x' d \xi
\end{equation*}
The proof here will be very similar to the case of $\R^n$, so we just insist on the differences. We let $\tilde{\chi}$ be some smooth compactly supported function equal to $1$ around $\tilde{x}$ ; we insert $1=\tilde{\chi}+(1-\tilde{\chi})$ to break the integral into two parts (I) and (II).

For the first term (I), observe that it is an integral over a fixed compact set in the $x'$ variable, with an integrand that has symbolic behavior in the $\xi$ variable, and a very simple phase function. Classical stationary phase results directly apply to give that :
\begin{equation*}
\lim_{h\to 0} \mathrm{(I)} = \sigma(x,\xi_0).
\end{equation*}

To estimate (II), we integrate by parts in $\xi$; we just have to introduce suitable powers of $(y+y')$ to obtain the new integrand 
\begin{equation*}
C_k h^{2k-d-1} e^{ i  (\langle \tilde{x}-x',\xi\rangle + \phi(x'))/h} \frac{(y+y')^{2k}(1-\tilde{\chi}(x'))}{|\tilde{x}-x'|^{2k}} \left(\frac{y}{y'}\right)^{(d+1)/2} \sigma^\ast_k\left(\frac{\tilde{x}+x'}{2}, \xi\right) \chi(x'),
\end{equation*}
where $\sigma^\ast_k = (X_Y^2 + X_J^2)^k \sigma$ and $C_k=(i/2)^{2k-d-1} \pi^{-d-1}$. With $k$ big enough, this is $\mathcal{O}(h^{(1-\rho)2k-d-1})$ in $L^1( d x' d \xi)$.

\end{proof}

\subsubsection{Basic boundedness estimates.}

\begin{lemma}\label{lemma:roughL2boundedness}
For all $\epsilon >0$, the elements of $\Psi_\rho^{-d-1-\epsilon}$ extend to bounded operators on $L^2(Z)$, with $\mathcal{O}(h^{-\rho d-1})$ norm as $h\to 0$.
\end{lemma}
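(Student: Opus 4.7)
The plan is to reduce to the Weyl quantization against Lebesgue measure via a unitary conjugation and then apply a weighted Schur test. By definition $\Op_h(\sigma)f = y^{(d+1)/2}\mathbf{Op}_h^w(\sigma)(y^{-(d+1)/2}f)$, and the map $f\mapsto y^{-(d+1)/2}f$ is a unitary isomorphism from $L^2(Z)$ onto $L^2(dy\,d\theta)$ (acting on $\Lambda$-periodic functions on $\Hh^{d+1}$). Therefore $\|\Op_h(\sigma)\|_{L^2(Z)\to L^2(Z)} = \|\mathbf{Op}_h^w(\sigma)\|_{L^2(dy\,d\theta)\to L^2(dy\,d\theta)}$, and one works on the latter, whose Schur integral over a fundamental domain in $\theta$ equals, after periodizing the kernel, the integral over all of $\R^d$ in $\theta'$.

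The main pointwise ingredient is a kernel estimate obtained by integration by parts in $\xi$ in the oscillatory integral \eqref{eq:Weyl-quantization-Rd}, using the vector field $L=-ih(x-x')\cdot\partial_\xi/|x-x'|^2$ which preserves the phase. Each application of its adjoint costs one $\partial_\xi$-derivative of $\sigma$: in the exotic class this loses a factor $h^{-\rho}$ but lowers the symbolic order by $1$, providing $\bar y\langle\bar y\xi\rangle^{-1}$. After $N$ iterations, combined with the trivial bound $\int\langle\bar y\xi\rangle^{-d-1-\epsilon}d\xi\lesssim \bar y^{-(d+1)}$, one gets
\begin{equation*}
|\tilde K(x, x')| \leq \frac{C_N}{(h\bar y)^{d+1}}\left(1+\frac{|x-x'|}{h^{1-\rho}\bar y}\right)^{-N},\qquad \bar y=(y+y')/2.
\end{equation*}
I would then apply a weighted Schur test with weight $w(y)=y^a$ for some fixed $a\in(-1,0)$; by symmetry of $|\tilde K|$ in $(x,x')$, only the single condition $\sup_x\int_{\Hh^{d+1}}|\tilde K(x,x')|(y'/y)^a\,dy'\,d\theta' \leq Ch^{-\rho(d+1)}$ needs to be checked. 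Splitting the integral in $y'$: the on-diagonal region $y'\in[y/2,2y]$ yields the dominant contribution $\mathcal{O}(h^{-\rho(d+1)})$ via the rescaling $(u,v)=((y-y')/(h^{1-\rho}y),(\theta-\theta')/(h^{1-\rho}y))$; in each of the two off-diagonal regions one has $|x-x'|/(h^{1-\rho}\bar y)\gtrsim h^{-(1-\rho)}$, so the IBP factor gives arbitrary decay $h^{N(1-\rho)}$, negligible for $N$ large. Since $\rho<1$, $h^{-\rho(d+1)}\leq h^{-\rho d-1}$, yielding the claimed bound.

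The main difficulty is the behaviour at the $y'$-tails: without any weight the Schur test fails, because after integrating the kernel bound in $\theta'\in\R^d$ the resulting expression in $y'$ behaves like $h^{N(1-\rho)-d-1}/\max(y,y')$ in the off-diagonal regions, whose unweighted integral diverges logarithmically either at $y'=0^+$ or at $y'=\infty$. The restricted range $a\in(-1,0)$ is exactly what makes both tails integrable: $a>-1$ handles $y'\to 0^+$ while $a<0$ handles $y'\to\infty$.
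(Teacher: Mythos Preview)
Your proof is correct and follows essentially the same strategy as the paper: obtain a pointwise kernel bound by integration by parts in the frequency variable, then apply a weighted Schur test to handle the divergent $y'$-tails. The paper integrates by parts only in the $J$ direction (via the identity $\frac{\theta-\theta'}{ih(y+y')/2}K^w_\sigma = K^w_{X_J\sigma}$), obtains decay only in $\theta-\theta'$, and then treats the $y'$-integral uniformly via the change of variables $u=y'/y$; you instead integrate by parts in all of $\xi$, gain decay in the full $|x-x'|$, and split the $y'$-integral into near- and off-diagonal pieces. Your extra decay in $|y-y'|$ actually yields the slightly sharper bound $\mathcal{O}(h^{-\rho(d+1)})$, while the paper is content with $\mathcal{O}(h^{-\rho d-1})$; your weight $y^a$ with $a\in(-1,0)$ plays exactly the role of the paper's parameter $\tau\in(-\tfrac{d+1}{2}-1,-\tfrac{d+1}{2})$ in its modified Schur lemma.
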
 

In the subsequent developments, we will call \emph{Schwarz Kernel} of $A: C^\infty_c \to C^\infty$ the distribution defined by :
\begin{equation*}
Af(x) = \int_Z K(x,x') f(x')  d x'
\end{equation*}
where $ d x' = dy d \theta$ is the Lebesgue measure in the half-cylinder. Let us state a modified Schur lemma --- see page 82 in \cite{MR2952218} for the original version.
\begin{lemma}\label{lemma:Schur_boundedness} 
Let $A$ be an operator from $C^\infty_c(Z)$ to $C^\infty(Z)$, with Schwarz kernel $K$. Assume that for some $\tau \in \R$,
\begin{equation}\label{eq:Schur_estimate}
C(A,\tau) := \sup_{x} \int_{x'\in Z} \left(\frac{y'}{y}\right)^{d+1+\tau} |K(x,x')|dx' \times \sup_{x'} \int_{x \in Z} \left(\frac{y}{y'}\right)^\tau |K(x,x')| dx < \infty.
\end{equation}
Then $A$ can be extended to a bounded operator on $L^2$ with
\begin{equation*}
\| A \|_{L^2 \to L^2 }^2 \leq C(A,\tau).
\end{equation*}
\end{lemma}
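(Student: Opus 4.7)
The plan is to run a weighted Schur test adapted to the mismatch between the Lebesgue measure $dx'$ used to define the kernel and the hyperbolic volume $d\mathrm{vol}(x)=dx/y^{d+1}$ used to define $L^2(Z)$. The asymmetric exponents $d+1+\tau$ versus $\tau$ in the statement are exactly what one obtains after absorbing the factor $1/y^{d+1}$ coming from the volume form, so the proof is essentially Cauchy--Schwarz in $x'$ with a well-chosen multiplicative weight, followed by Fubini.

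Concretely, for $f \in C^\infty_c(Z)$, I would insert the weight $g(x,x') = (y'/y)^{d+1+\tau}$ by writing $K f = (|K|^{1/2} g^{1/2}) \cdot (|K|^{1/2} g^{-1/2} \operatorname{sgn}(K) f)$ and applying Cauchy--Schwarz in $x'$:
\begin{equation*}
|Af(x)|^2 \leq \left(\int |K(x,x')|\,(y'/y)^{d+1+\tau}\,dx'\right)\left(\int |K(x,x')|\,(y/y')^{d+1+\tau}\,|f(x')|^2\,dx'\right).
\end{equation*}
The first factor is bounded in $x$ by the first supremum in \eqref{eq:Schur_estimate}, call it $M_1$. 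Then I integrate against $d\mathrm{vol}(x)=dx/y^{d+1}$ and apply Fubini:
\begin{equation*}
\|Af\|_{L^2(Z)}^2 \leq M_1 \int |f(x')|^2\,(y')^{-(d+1+\tau)}\left(\int |K(x,x')|\,y^{\tau}\,dx\right)dx',
\end{equation*}
using that $(y/y')^{d+1+\tau}\cdot y^{-(d+1)} = y^{\tau}\cdot(y')^{-(d+1+\tau)}$.

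Finally, I rewrite the inner integral as $(y')^{\tau}\int |K(x,x')|(y/y')^{\tau}\,dx$, which is bounded by $M_2(y')^{\tau}$ (the second supremum in \eqref{eq:Schur_estimate}). The factor $(y')^{\tau}\cdot(y')^{-(d+1+\tau)} = (y')^{-(d+1)}$ reconstitutes the volume form on the $x'$ side, giving $\|Af\|_{L^2(Z)}^2 \leq M_1 M_2 \|f\|_{L^2(Z)}^2 = C(A,\tau)\|f\|_{L^2(Z)}^2$. There is no real obstacle here beyond bookkeeping: the whole content of the lemma is to choose the CS weight so that the two resulting one-variable integrals both have $y$--powers compatible with the conversion between $dx$ and $d\mathrm{vol}$. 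Equivalently, one could first conjugate $A$ by the unitary $f\mapsto y^{(d+1)/2}f$ from $L^2(dx)$ to $L^2(Z)$ and then apply the classical Schur test with weight $w(y)=y^{(d+1)/2+\tau}$; I would present whichever version is more compact in context.
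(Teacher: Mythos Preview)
Your proof is correct and is essentially the same weighted Schur/Cauchy--Schwarz argument as the paper's: the paper splits with the weight $y'^{d+1+\tau}$ while you use $(y'/y)^{d+1+\tau}$, but since these differ only by a function of $x$ alone the two computations are line-for-line equivalent once one integrates against $dx/y^{d+1}$. Your final remark about conjugating by $f\mapsto y^{(d+1)/2}f$ and running the classical Schur test with weight $y^{(d+1)/2+\tau}$ is a legitimate (and arguably cleaner) repackaging of the same calculation.
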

 
\begin{proof}
We follow the classical proof. All the integrals are over $Z$. if $u\in C^\infty_c(Z)$,
\begin{align*}
| Au(x)| & \leq \int \sqrt{y'^{d+1 + \tau}|K|} \sqrt{y'^{-d-1 - \tau} |K| |u|^2}  d x' \\
		& \leq \| y'^{d+1+ \tau} |K| \|_{L^1(x')}^{1/2}\left[ \int y'^{-\tau}|K(x,x')| |u|^2 y'^{-d-1} d x' \right]^{1/2} \\
\int |Au(x)|^2 y^{-d-1}  d x &\leq \left[ \sup_{x} \int_{x'} \left(\frac{y'}{y}\right)^{d+1+\tau} |K(x,x')| \right] \int_{x,x'} \left( \frac{y}{y'}\right)^\tau |K(x,x')| \frac{|u|^2}{y'^{d+1}} \\
									&\leq C(A,\tau)\int \frac{|u|^2 dx'}{y'^{d+1}}
\end{align*}
\end{proof}

Now, we prove lemma \ref{lemma:roughL2boundedness}
\begin{proof}
Let $\sigma \in S^{-d-1-\epsilon}_\rho$ with some $\epsilon>0$. Formula \eqref{eq:Weyl-quantization-Rd} actually defines $\Op^w(\sigma)$ acting on the half plane. We let $K^w_\sigma(x,x')$ be its kernel, and we let $K_\sigma$ be the kernel of $\Op_h(\sigma)$ --- $K_\sigma$ depends on $h$. Then we have
\begin{equation}\label{eq:summing_kernel_quotient}
K_\sigma (y,\theta, y', \theta') = \left(\frac{y}{y'}\right)^{\frac{d+1}{2}}\sum_{k\in \Lambda} K^w_\sigma(y,\theta, y',\theta' + k).
\end{equation}
Plugging this identity in \eqref{eq:Schur_estimate}, we see that instead of integrating over the cusp $Z$, we can integrate over the half space; this way, we prove that $C(\Op(\sigma), \tau)$ is less than
\begin{equation*}
\left[\sup_x \int_{x'\in \Hh^{d+1}} \left( \frac{y'}{y}\right)^{\frac{d+1}{2} + \epsilon} |K^w_\sigma(x,x')| dx'\right] \left[\sup_{x'} \int_{x \in \Hh^{d+1}} \left( \frac{y}{y'}\right)^{\frac{d+1}{2} + \epsilon} |K^w_\sigma(x,x')| dx\right] .
\end{equation*}
By linearity of $\Op$, it suffices to consider real symbols, so we assume that $\sigma$ is real. Then $K^w_\sigma(x,x') = \overline{K^w_\sigma(x',x)}$. By symmetry of the two terms in the above equation, it suffices to prove that for some $\tau \in \R$, the first term is finite.

Since $\sigma$ is of order strictly less than $-d-1$, it is integrable in the fibers, and we can estimate : 
\begin{equation*}
|K^w_\sigma(x,x')| \leq  \frac{1}{(2\pi h)^{d+1}} \int_{\R^{d+1}} d \xi \left|\sigma(\frac{x+x'}{2}, \xi) \right| \leq C_\epsilon \frac{1}{h^{d+1}} \frac{1}{(y+y')^{d+1}}q_{d+1+\epsilon, 0}(\sigma).
\end{equation*}

With no decay in $\theta$, this is obviously not sufficient to prove boundedness. We observe the following fact :
\begin{equation}\label{eq:Differentiating-J}
\frac{\theta - \theta'}{ i  h \frac{y+y'}{2}} K^w_\sigma = K^w_{X_J \sigma}.
\end{equation}
Using \eqref{eq:Differentiating-J} $d+1$ times, and the definition of symbols, we get that :
\begin{equation}\label{eq:growth_away_diagonal}
|K^w_\sigma(x,x')| \leq C \frac{1}{h^{d+1}} \frac{1}{(y+y')^{d+1}} \frac{1}{1+ h^{(d+1)\rho}\left| \frac{\theta - \theta'}{h (y+y')}\right|^{d+1}}.
\end{equation}
Now, we integrate \eqref{eq:growth_away_diagonal} in the $\theta'$ variable. Actually, we translate by $\theta$, and we rescale with $\mu = h^{\rho - 1} (\theta' - \theta)/(y+y')$ to get:
\begin{equation*}
\int |K(y,\theta, y', \theta')| d \theta' \leq C h^{-\rho d - 1} \frac{1}{y+y'} \int_{\R^d} \mathrm{d}\mu \frac{1}{1 + |\mu|^{d+1}} \leq \frac{C}{h^{1+\rho d}} \frac{1}{y+y'} .
\end{equation*}
We just have to find $\tau$ such that
\begin{equation*}
\sup_{y>0} \int_{y'>0} \frac{1}{y+y'} \left(\frac{y'}{y} \right)^{\frac{d+1}{2} + \tau}  d y' < \infty.
\end{equation*}
and then the norm of $\Op(\sigma)$ on $L^2$ will be $\mathcal{O}(h^{-\rho d -1})$ times some symbol norm. Changing variables to $u=y'/y$, the LHS is
\begin{equation*}
\sup_{y> 0} \int_{0}^\infty \frac{u^{\frac{d+1}{2}+\tau}}{1+u}   d u = \int_0^\infty \frac{u^{\frac{d+1}{2}+\tau}}{1+u}   d u.
\end{equation*}
For $\tau \in ]-(d+1)/2 - 1, -(d+1)/2[$, this is a convergent integral.
\end{proof}

If we wanted an optimal statement in terms of regularity, we could remark here that we only use $d+1$ symbol estimates (differentiating only in the $J$ direction) to obtain this result.

\subsubsection{Sobolev regularity}

Recall that all functionnal spaces are defined in appendix \ref{appendix:functionnal_spaces_cusp}.

We need a parametrix lemma for the composition of a pseudor with a differential operator:
\begin{lemma}\label{lemma:parametrix_1}
Let $\sigma\in S^n_\rho$, $Q_{1,2,}$ be constant-coefficient elements of $\mathcal{V}(Z)$, of order $k_{1,2}$. Then there exists a symbol $\tilde{\sigma} \in S^{n+k_1+k_2}_\rho$ such that
\begin{align*}
Q_1 \Op(\sigma) Q_2 &= \Op(\tilde{\sigma}), \\
\tilde{\sigma} 		&= \sigma \times \sigma^0(Q_1) \sigma^0(Q_2) + \mathcal{O}(h^{1-\rho} \Psi^{n+k_1+k_2-1}_\rho).
\end{align*}

Additionally, let $Q_{3,4}$ also be constant-coefficient differential operators with order $k_{3,4}$, and $N \in \N$, satisfying the ellipticity condition that $\sigma^0(Q_3) \sigma^0(Q_4)$ does not vanish. Then, there exists a symbol $\tilde{\sigma}_N$ of order $n+ k_1 + k_2 - k_3 - k_4$, such that :
\begin{align}
Q_1 \Op(\sigma) Q_2 &= Q_3\Op(\tilde{\sigma}_N) Q_4 + \mathcal{O}(h^N \Psi^{-N}_\rho), \label{eq:parametrix_1} \\
\tilde{\sigma}_N 	&= \sigma \frac{\sigma^0(Q_1) \sigma^0(Q_2)}{\sigma^0(Q_3) \sigma^0(Q_4)} + \mathcal{O}(h^{1-\rho} \Psi^{n+k_1+k_2 - k_3 - k_4 -1}_\rho).\notag
\end{align}
\end{lemma}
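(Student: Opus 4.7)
The plan is to handle the composition $Q_1\Op(\sigma) Q_2$ first (the first statement) and then build the parametrix by iteration. For the composition, my approach is to lift everything to the Euclidean half-space $\Hh^{d+1}$ and exploit the standard Weyl calculus. Writing $U$ for multiplication by $y^{(d+1)/2}$, Definition \ref{definition:Quantization} gives $\Op_h(\sigma) = U\,\mathbf{Op}_h^w(\sigma)\,U^{-1}$, so
\[
Q_1\Op(\sigma) Q_2 = U\, \widetilde Q_1\, \mathbf{Op}^w(\sigma)\, \widetilde Q_2\, U^{-1}, \quad \widetilde Q_i := U^{-1} Q_i U.
\]
A direct computation using $X_y(y^{(d+1)/2}) = \frac{d+1}{2}\, y^{(d+1)/2}$ gives $U^{-1}(hX_y)U = hX_y + h(d+1)/2$ and $U^{-1}(hX_{\theta_i})U = hX_{\theta_i}$, so each $\widetilde Q_i$ remains a constant-coefficient element of $\mathcal V(Z)$ of the same order $k_i$, with the same principal symbol as $Q_i$ (only subprincipal scalars are shifted). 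Its full semi-classical Weyl symbol $\widetilde q_i$ is polynomial in $(y, Y, J)$, of degree $\leq k_i$ separately in $(Y,J)$ and in $y$; in particular the Moyal product $\widetilde q_1 \# \sigma \# \widetilde q_2$ is a \emph{finite} sum, and $Q_1\Op(\sigma) Q_2 = \Op(\widetilde q_1 \# \sigma \# \widetilde q_2)$.

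The technical content is then to show $\widetilde\sigma := \widetilde q_1 \# \sigma \# \widetilde q_2 \in S^{n+k_1+k_2}_\rho$ with the announced leading term. I would convert Euclidean derivatives into the hyperbolic vector fields via $\partial_y = y^{-1}X_y$ and $\partial_Y = yX_Y$ (and the analogues in $\theta$ and $J$). In a Moyal term of total order $j$, each $\xi$-derivative on $\widetilde q_i$ brings a factor $y^{+1}$ and drops its polynomial order by $1$, while the paired $x$-derivative on $\sigma$ brings $y^{-1}$ and costs $h^{-\rho}$ in the exotic class; symmetrically for the other pairing. All $y$-powers cancel within each term --- this is the key bookkeeping --- and the $j$-th Moyal contribution lies in $h^{j(1-\rho)} S^{n+k_1+k_2 - j}_\rho$. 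The leading $j=0$ term is $\widetilde q_1 \widetilde q_2 \sigma$, which modulo $h\cdot S^{n+k_1+k_2-1}_{\mathcal V}$ coincides with $\sigma^0(Q_1)\sigma^0(Q_2)\sigma$ (since $\widetilde q_i = \sigma^0(Q_i) + \mathcal O(h)\cdot S^{k_i-1}_{\mathcal V}$), yielding the first claim.

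For the parametrix I iterate. Set $e := \sigma^0(Q_3)\sigma^0(Q_4)$, nowhere vanishing by ellipticity, and define $\widetilde\sigma^{(0)} := \sigma\cdot \sigma^0(Q_1)\sigma^0(Q_2)/e \in S^{n+k_1+k_2-k_3-k_4}_\rho$. Applying the first part to $Q_3\Op(\widetilde\sigma^{(0)}) Q_4$ produces a symbol whose leading term matches that of $\widetilde\sigma$, so the difference satisfies $Q_1\Op(\sigma) Q_2 - Q_3\Op(\widetilde\sigma^{(0)}) Q_4 = \Op(r_1)$ with $r_1 \in h^{1-\rho} S^{n+k_1+k_2-1}_\rho$. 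Iteratively define $\widetilde\sigma^{(j+1)} := r_{j+1}/e$; because the first part is linear in its input symbol, an easy induction gives $r_j \in h^{j(1-\rho)} S^{n+k_1+k_2-j}_\rho$. Choosing $N'$ large enough that $N'(1-\rho) \geq N$ and $N' \geq N + n + k_1 + k_2$, the remainder $\Op(r_{N'})$ lies in $h^N \Psi^{-N}_\rho$, and $\widetilde\sigma_N := \sum_{j < N'} \widetilde\sigma^{(j)}$ gives the desired expansion with the stated principal part.

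The principal obstacle is the Moyal bookkeeping in the second paragraph: one must verify that the Weyl composition, computed in Euclidean coordinates of the half-space, actually produces symbols in the geometrically defined hyperbolic class $S_\rho$. The crucial point is that every $y$-power arising from converting $\partial_y, \partial_Y$ into the geometric $X$-fields cancels between $\widetilde q_i$ (where powers absorb into the polynomial) and $\sigma$ (where they balance via the opposite pairing), so that the result genuinely respects the anisotropic weight $\langle y\xi\rangle$. Once this cancellation is verified, the iteration for the parametrix is mechanical.
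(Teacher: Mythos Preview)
Your argument is correct, and it takes a genuinely different route from the paper for the first part of the lemma. The paper proceeds by induction on the order of $Q_{1,2}$, reducing to the four base cases $Q_1 = hX_y$ or $hX_\theta$ (and symmetrically for $Q_2$), and in each case manipulates the Schwartz kernel directly: writing $y = (y+y')/2 + (y-y')/2$, integrating by parts in $\xi$, and reading off the resulting symbol explicitly (e.g.\ $\sigma_y = iyY\sigma + \frac{h}{2}((d+1)\sigma - yYX_Y\sigma) + \frac{h}{2}X_y\sigma + i\frac{h^2}{4}X_YX_y\sigma$). Your approach instead conjugates by $U = y^{(d+1)/2}$ to reduce to the flat Weyl calculus on $\Hh^{d+1}$, observes that $\widetilde Q_i = \mathbf{Op}^w_h(\widetilde q_i)$ with $\widetilde q_i$ polynomial in $(y,Y,J)$, and then uses that the Moyal expansion terminates. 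The $y$-power cancellation you describe is exactly the mechanism that makes the outcome land in the hyperbolic class $S_\rho^{n+k_1+k_2}$, and your accounting is right: each $\partial_\xi$ on $\widetilde q_i$ contributes a surplus $y$ that is absorbed by the matching $\partial_x = y^{-1}X_x$ on $\sigma$, and vice versa. What you gain is a cleaner, coordinate-free statement that avoids the case splits; what the paper's route buys is completely explicit subprincipal terms, which are sometimes useful but not needed here.

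For the second part, your iterative construction (correct the leading term, apply the first part to the correction, repeat) is the same mechanism as the paper's, which phrases it as solving a lower-triangular linear system for the coefficients of a formal expansion $\sum h^{(1-\rho)k}\sigma_k$; the diagonal of that system is multiplication by $e = \sigma^0(Q_3)\sigma^0(Q_4)$, so inverting it is precisely your division step. Both arguments implicitly use that $1/e \in S^{-k_3-k_4}$, i.e.\ a quantitative ellipticity $|e| \gtrsim \langle y\xi\rangle^{k_3+k_4}$, which is stronger than mere nonvanishing but is satisfied in every application (powers of $P+1$); this is a shared imprecision, not a flaw in your proof.
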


\begin{proof}
We start with the first part. Proceeding by induction, we see that it is enough to prove the property when $Q_{1,2}$ are constants, or first order differential operators. The case of constants is straightforward. Now, let us assume $Q_1 = hX_\theta$ and $Q_2 =1$. The kernel of $Q_1\Op_h(\sigma)$ is
\begin{align*}
{} &  \frac{1}{(2\pi h)^{d+1}}\int_{\R^{d+1}} e^{i\langle x- x',\xi\rangle} \left(\frac{y}{y'}\right)^{\frac{d+1}{2}}\left[ i  y J \sigma\left(\frac{x+x'}{2}, \xi\right) + \frac{h}{2} y \partial_\theta\sigma\left(\frac{x+x'}{2},\xi\right) \right]d\xi\\
\intertext{Decomposing $y= (y+y')/2 + (y-y')/2$, integrating by part in the $\xi$ variable when necessary, we get}
	& = \frac{1}{(2\pi h)^{d+1}}\int_{\R^{d+1}} e^{i\langle x- x',\xi\rangle} \left(\frac{y}{y'}\right)^{\frac{d+1}{2}}\left[ \sigma_\theta\left(\frac{x+x'}{2}, \xi\right) \right]d\xi \\
\intertext{where}
\sigma_\theta	& =  i  yJ\sigma - \frac{h}{2} yJ X_Y \sigma + \frac{h}{2} X_\theta\sigma + i\frac{h^2}{4} X_\theta X_Y \sigma.\\
\sigma_\theta	& =  i  yJ\sigma  + \mathcal{O}(h^{1-\rho} S^n_\rho). \\
\intertext{similarly, for $Q_1= hX_y$, we get $Q_1 \Op(\sigma) = \Op(\sigma_y)$ with}
\sigma_y		&= i  yY\sigma + \frac{h}{2} ((d+1)\sigma - yY X_Y \sigma) + \frac{h}{2} X_y\sigma + i\frac{h^2}{4} X_Y X_y  \sigma.
\end{align*}
The case when $Q_1=1$ and $Q_2 = hX_\theta, hX_y$ will lead to similar computations, and the same conclusion.

Now, we prove the second part of the lemma. We look for a semi-classical expansion for $\tilde{\sigma}_N$, in the following form : $\tilde{\sigma}_N = \sum_0^{\infty} h^{(1-\rho)k}\sigma_k$ with $\sigma_k \in S^{(n+k_1+k_2 - k_3 - k_4) - k}$. Injecting this formal development in \eqref{eq:parametrix_1}, we find a linear system of equations on the $\sigma_k$'s. Actually, identifying powers of $h$, we see that this system is in lower-triangular form. The diagonal coefficients are all the same, equal to $\frac{\sigma^0(Q_1) \sigma^0(Q_2)}{\sigma^0(Q_3) \sigma^0(Q_4)}$. The ellipticity condition is sufficient to see that this system has a unique solution of the above form. 

Our formal series does not converge, so we truncate at order $M$ for some integer $M\gg 1$. The remainder is then $\mathcal{O}(h^{(M+1)(1-\rho)}\Psi_\rho^{n+k_1+k_2 - k_3 - k_4 -M -1})$. This is $\mathcal{O}(h^N \Psi_\rho^{-N})$ for $M$ big enough; we take $\tilde{\sigma}_N$ to be this truncated series.
\end{proof}

\begin{prop}\label{prop:Sobolev_boundedness}
For all $\epsilon>0$, the elements of $\Psi^{n-d-1-\epsilon}$ are bounded from $H^s$ to $H^{s-n}$ for all $s, n$ real numbers, with norm $\mathcal{O}(h^{-|s|-|n|-\rho d -1})$.
\end{prop}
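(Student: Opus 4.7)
The strategy is the classical parametrix argument: use Lemma \ref{lemma:parametrix_1} to reduce the Sobolev boundedness of $\Op(\sigma)$ to the $L^2$ boundedness of a related pseudor, and then apply Lemma \ref{lemma:roughL2boundedness}. The natural elliptic operator to work with is $I + P$, whose principal symbol $1 + p = 1 + |\xi|^2_x/2$ is nowhere vanishing; every positive integer power $(I+P)^k$ then lies in $\mathcal{V}(Z)$ and satisfies the non-vanishing hypothesis required by the second part of Lemma \ref{lemma:parametrix_1}.

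I would first reduce to the case where both $s$ and $s - n$ are non-negative even integers. Negative orders are handled by duality: the formal $L^2$-adjoint of $\Op(\sigma)$, computed directly from Definition \ref{definition:Quantization} and the formal self-adjointness (up to complex conjugation of the symbol) of the Weyl quantization on $\R^{d+1}$, is again in $\Psi_\rho^{n-d-1-\epsilon}$, so that $H^s \to H^{s-n}$ boundedness of $\Op(\sigma)$ is equivalent to $H^{n-s} \to H^{-s}$ boundedness of the adjoint. Non-integer values of $s$ and $n$ are then recovered by complex interpolation.

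Under the reduction, apply the second part of Lemma \ref{lemma:parametrix_1} with $Q_1 = (I+P)^{(s-n)/2}$, $Q_2 = Q_3 = I$, $Q_4 = (I+P)^{s/2}$ and $N$ large:
\[
(I+P)^{(s-n)/2}\,\Op(\sigma) \;=\; \Op(\widetilde{\sigma}_N)\,(I+P)^{s/2} \;+\; \mathcal{O}\bigl(h^N \Psi_\rho^{-N}\bigr),
\]
where $\widetilde{\sigma}_N$ has order $(n-d-1-\epsilon) + (s-n) - s = -d-1-\epsilon$ and leading term $\sigma (1+p)^{-n/2}$. By Lemma \ref{lemma:roughL2boundedness}, $\Op(\widetilde{\sigma}_N)$ is bounded on $L^2(Z)$ with norm $\mathcal{O}(h^{-\rho d - 1})$; composing the remainder on the right with $(I+P)^{-s/2}$, whose operator norm is at most $1$ since $(I+P) \geq I$, yields a term of size $\mathcal{O}(h^{N - \rho d - 1})$, which is negligible for $N$ large. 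This produces the $L^2$ estimate
\[
\bigl\|(I+P)^{(s-n)/2}\,\Op(\sigma)\,f\bigr\|_{L^2} \;=\; \mathcal{O}(h^{-\rho d - 1})\,\bigl\|(I+P)^{s/2}f\bigr\|_{L^2}.
\]

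The last step is to translate this $(I+P)^{r/2}$ estimate into a genuine Sobolev estimate via the norm equivalence established in Appendix \ref{appendix:functionnal_spaces_cusp}. Since $I + P = I + h^2(-\Delta)/2$ only dominates $I - \Delta$ up to a factor $h^2$, the norms $\|\cdot\|_{H^r}$ and $\|(I+P)^{r/2}\cdot\|_{L^2}$ are equivalent with constants of order $h^{-|r|}$ in the unfavourable direction. Inserting this equivalence on both sides gives a prefactor of $h^{-|s| - |s-n|}$, which by the triangle inequality $|s-n| \leq |s| + |n|$ is bounded by $h^{-|s| - |n|}$ for $h \leq 1$. Combined with the $h^{-\rho d - 1}$ from Lemma \ref{lemma:roughL2boundedness}, this yields the claimed estimate. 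The main obstacle is really just bookkeeping: once the elliptic parametrix is correctly set up and the Sobolev norm equivalence is in hand, all analytic content is contained in Lemma \ref{lemma:roughL2boundedness}.
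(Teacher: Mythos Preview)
Your proof is correct and follows essentially the same line as the paper's: reduce to even integer orders by interpolation, use Lemma~\ref{lemma:parametrix_1} with powers of $I+P$ to rewrite $(I+P)^{(s-n)/2}\Op(\sigma)(I+P)^{-s/2}$ as a pseudor of order $-d-1-\epsilon$ plus a harmless remainder, apply Lemma~\ref{lemma:roughL2boundedness}, and finally absorb the $h$-dependence from the Sobolev norm equivalence \eqref{eq:equivalent-norms}. The only cosmetic difference is that you reduce to $s,\,s-n\geq 0$ by a duality argument before applying the parametrix, whereas the paper treats all sign cases at once by writing the remainder as $(P+1)^{-(s-n)^-/2}\,\mathcal{O}(h^N\Psi_\rho^{-N})\,(P+1)^{-s^+/2}$, with the negative-exponent factors automatically bounded on $L^2$; both routes are equivalent.
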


\begin{proof}
Proceeding by interpolation, we only need to prove this result for $s,n$ even integers. Let $\sigma \in S^k_\rho$ with $k < n- d- 1$. Then by \eqref{eq:equivalent-norms}
\begin{equation*}
\| \Op_h(\sigma) \|_{H^s \to H^{s-n}} =  h^{-|s|-|n|}\| (P + 1)^{(s-n)/2}\Op_h(\sigma)(P + 1)^{-s/2} \|_{L^2 \to L^2}.
\end{equation*}
By lemma \ref{lemma:parametrix_1}, there is a symbol $\tilde{\sigma}_N \in S^{k-n}_\rho$ such that
\begin{equation*}
(P + 1)^{(s-n)/2}\Op_h(\sigma)(P + 1)^{-s/2} = \Op_h(\tilde{\sigma}_N) + (P + 1)^{-(s-n)^-/2}\left[ \mathcal{O}(h^N \Psi_\rho^{-N})\right](P + 1)^{-s^+/2}.
\end{equation*}
Now, we only have to apply lemma \ref{lemma:roughL2boundedness} to $\tilde{\sigma}_N$ to conclude since $(P+1)^{-k}$ is bounded on $L^2$ as soon as $k\geq 0$.
\end{proof}

\subsubsection{Pseudo-locality statements}

Before going on to prove pseudo-locality, we need to define what we mean when we say that a family of operators is smoothing.

\begin{definition}\label{def:smoothing_cusp}
We say that a family of operators $\{A_h\}_{h>0}$ on $L^2(Z)$ is \emph{smoothing} if for every $h>0$ and $n>0$, $A_h$ maps $H^{-n}$ to $H^n$ in a continuous fashion. Additionnaly, we require that the following semi-norms
\begin{equation*}
\| . \|_{n,n} = \sup_{h >0} \| . \|_{H^{-n}\to H^n}, \quad n\in \N.
\end{equation*} 
are finite. We refer to the space of smoothing operators as $\Psi^{-\infty}$. The semi-norms give a topology to that space. We let $\Psi_\rho = \Psi^{-\infty} \cup_n \Psi_\rho^n$.

A family $\{A_h\}_{h>0}$ is said to be \emph{asymptotically} smoothing if for every $n>0$, there is a $h_n >0$ such that for every $0<h<h_n$, $A_h$ is uniformly bounded from $H^{-n}$ to $H^n$. This space is also endowed with semi-norms
\begin{equation*}
\| . \|_{n,n,k} = \sup_{0< h < 1/k} \| . \|_{H^{-n}\to H^n} \quad n,k\in \N.
\end{equation*}

Finally, we say that a family of operators is (asymptotically) negligible if it is $\mathcal{O}(h^\infty)$ in the space of (asymptotically) smoothing operators. The space of negligible operators is denoted $\mathcal{O}(h^{\infty})\Psi^{-\infty}$.
\end{definition}

We deduce of \ref{prop:Sobolev_boundedness}

\begin{cor}\label{cor:composition_negligible}
The composition of a negligible (resp. asymptotically negligible) operator with a pseudor is still a negligible (resp. asymptotically negligible) operator.
\end{cor}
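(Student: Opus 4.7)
The plan is to deduce the corollary directly from proposition \ref{prop:Sobolev_boundedness}, which gives polynomial-in-$h^{-1}$ bounds for pseudors acting between Sobolev scales. Fix a negligible (resp.~asymptotically negligible) family $R$ and a pseudor $A\in\Psi_\rho^k$ of some finite order $k\in\R$. I need to check that both $RA$ and $AR$ satisfy $\|\cdot\|_{H^{-n}\to H^n} = O(h^M)$ for every $n\in\N$ and every $M>0$ (uniformly, resp.~for $h$ below a threshold).

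The key input is the following. For any fixed $n\in\N$, choose an integer $N\geq n$ large enough that $N-n > k+d+1$; then proposition \ref{prop:Sobolev_boundedness} (applied with $\epsilon = 1/2$, say) yields
\begin{equation*}
\|A\|_{H^{-n}\to H^{-N}} + \|A\|_{H^{N}\to H^{n}} \leq C h^{-p}
\end{equation*}
for some $p = p(n,k)\geq 0$. By the definition of (asymptotic) negligibility, for every $M > 0$ there exists a constant $C_M$ (and possibly a threshold $h_N$ in the asymptotic case) such that $\|R\|_{H^{-N}\to H^N} \leq C_M h^{M+p}$. For $RA$, I factor through $H^{-N}$, noting also that $H^N\hookrightarrow H^n$ continuously since $N\geq n$:
\begin{equation*}
\|RA\|_{H^{-n}\to H^n} \leq \|R\|_{H^{-N}\to H^N}\,\|A\|_{H^{-n}\to H^{-N}} \leq C\,C_M\, h^M.
\end{equation*}
For $AR$ I factor through $H^N$: using the inclusion $H^{-n}\hookrightarrow H^{-N}$ (valid because $n\leq N$) together with $R:H^{-N}\to H^N$ and then $A:H^N\to H^n$, the same product estimate gives $\|AR\|_{H^{-n}\to H^n}\leq C\,C_M\,h^M$. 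Since $M$ is arbitrary, both compositions are $O(h^\infty)$ in the appropriate sense, so they are negligible (resp.~asymptotically negligible).

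There is no substantive obstacle here: proposition \ref{prop:Sobolev_boundedness} has already absorbed the entire analytic content, and any polynomial loss in $h^{-1}$ coming from $A$ is immediately beaten by the rapid decay of $R$. The only thing to be careful about is the Sobolev bookkeeping --- i.e.~choosing $N$ large enough to simultaneously absorb the order $k$ of $A$ and the $d+1$ appearing in proposition \ref{prop:Sobolev_boundedness} --- and this is straightforward once the dependence $p = p(n,k)$ is tracked.
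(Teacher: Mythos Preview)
Your proof is correct and is essentially the argument the paper has in mind: the corollary is stated immediately after ``We deduce of \ref{prop:Sobolev_boundedness}'' with no further proof, so you have simply written out that deduction in detail. The Sobolev bookkeeping (choosing $N$ large enough to absorb the order $k$ and the $d+1$ loss) is done correctly, and the polynomial $h^{-p}$ loss from $A$ is indeed harmless against the $\mathcal{O}(h^\infty)$ decay of $R$.
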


\begin{notation}
We denote by $S(\R,\alpha)$ the class of symbols on $\R$ of order $\alpha$, meaning that $\eta \in S(\R, \alpha)$ when for all $k\geq 0$, there is a constant $C_k >0$ with
\begin{equation*}
\eta^{(k)}(u) \leq C_k \langle u \rangle^{\alpha - k}.
\end{equation*}

Let $K$ be the kernel of some operator $A$ on $C^\infty_c(Z)$. Let $\eta \in S(\R, \alpha)$. Then define $A_\eta$ to be the operator with kernel
\begin{equation*}
K_\eta (x,x') = K(x,x') \eta\left( \frac{y'}{y}- \frac{y}{y'}\right).
\end{equation*}
\end{notation}

\begin{prop}\label{prop:support_around_y-diagonal}
Let $\eta \in S(\R,\alpha)$ vanish near $0$ with $\alpha \leq 0$. Let $\sigma \in S_\rho$. Then $\Op(\sigma)_\eta$ is negligible.
\end{prop}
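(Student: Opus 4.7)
The plan is to reduce the claim to an $L^2\to L^2$ bound of order $O(h^\infty)$ via the Schur Lemma \ref{lemma:Schur_boundedness}, and then promote this to all Sobolev bounds by showing that the class of operators with kernel of the form $K_{\sigma'}(x,x')\,\eta'(w)$ (with $\sigma' \in S_\rho$ and $\eta' \in S(\R,\alpha')$ vanishing near zero) is stable under composition on either side with the constant-coefficient operators in $\mathcal V(Z)$.

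The single geometric input is the elementary observation that on $\supp \eta(w)$, the ratio $(y+y')/|y-y'|$ is uniformly bounded: setting $u = y'/y$, the hypothesis $|u - 1/u| \geq c > 0$ pushes $u$ away from $1$ and hence bounds $(1+u)/|1-u|$. To exploit this, I iterate $N$ times the identity $\frac{2(y-y')}{ih(y+y')}K^w_\tau = K^w_{X_Y\tau}$ (the $X_Y$-analogue of \eqref{eq:Differentiating-J}) to obtain
$$K^w_\sigma = \left(\frac{ih(y+y')}{2(y-y')}\right)^N K^w_{X_Y^N\sigma}.$$
Since $h^{N\rho}X_Y^N\sigma \in S^{n-N}_\rho$ and the prefactor is bounded by $(Ch)^N$ on $\supp \eta(w)$, one gains a net factor $h^{N(1-\rho)}$. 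Interpolating with $M$ iterations of \eqref{eq:Differentiating-J} in each $J$-direction (via the standard $\min$-trick between ``no $J$-derivatives'' and ``$M$ $J$-derivatives'') gives, for $N,M$ large,
$$|K^w_\sigma(x,x')\,\eta(w)| \leq \frac{C_{N,M}\,h^{N(1-\rho)-(d+1)}}{y_{mid}^{d+1}\prod_i(1+|\theta_i-\theta_i'|/(h^{1-\rho}y_{mid}))^M}.$$

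To apply Schur, unfolding the $\Lambda$-sum in \eqref{eq:summing_kernel_quotient} converts the $\theta'$-integration over $\R^d/\Lambda$ into one over $\R^d$, contributing a factor $(h^{1-\rho}y_{mid})^d$ for $M>d$. The remaining $y'$-integral, weighted by $(y'/y)^{(d+1)/2+\tau}/y_{mid}$, collapses under $u = y'/y$ to $\int_{|u-1/u|\geq c} u^{(d+1)/2+\tau}/(1+u)\,du$, which is convergent for $\tau \in \bigl(-\tfrac{d+3}{2},-\tfrac{d+1}{2}\bigr)$ and --- crucially --- independent of $y$. The symmetric Schur condition is handled identically, yielding $\|\Op(\sigma)_\eta\|_{L^2 \to L^2} = O(h^\infty)$.

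Finally, to upgrade to negligibility I observe that applying any constant-coefficient $Q \in \mathcal V(Z)$ on either side of $\Op(\sigma)_\eta$ produces, by Lemma \ref{lemma:parametrix_1} together with the product rule, a finite sum of operators of the same type $\Op(\tilde\sigma)_{\tilde\eta}$: derivatives either hit $K_\sigma$ (giving a new pseudor kernel of controlled order via the parametrix) or hit $\eta(w)$, producing $\eta'(w)\sqrt{4+w^2}$ and similar expressions that are still in $S(\R,\alpha)$ and still vanish near $0$, since $w$ depends only on $y,y'$. Hence $(P+1)^{k_1}\Op(\sigma)_\eta(P+1)^{k_2}$ is $O(h^\infty)$ in $L^2 \to L^2$ for every $k_1,k_2 \in \N$, which gives $\|\Op(\sigma)_\eta\|_{H^{-n}\to H^n} = O(h^\infty)$ for every integer $n$ (and all real $n$ by interpolation). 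The main obstacle is the kernel estimate and the Schur step: one must balance the $Y$- and $J$-derivatives precisely enough that the resulting integrals converge \emph{uniformly in $y$}, despite the full cusp $Z$ having infinite volume both as $y\to 0$ and as $y\to\infty$. Once the $y$-independence of the $u$-integral is secured, everything else is symbolic bookkeeping.
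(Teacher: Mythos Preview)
Your proof is correct and follows essentially the same strategy as the paper's: gain $h^{N(1-\rho)}$ from the $X_Y$-identity $\frac{y-y'}{ih(y+y')/2}K^w_\sigma = K^w_{X_Y\sigma}$ combined with the fact that $(y+y')/|y-y'|$ is bounded on $\supp\eta$, control the $\theta$-direction with the $X_J$-identity \eqref{eq:Differentiating-J}, apply Schur via the $u=y'/y$ scaling, and then promote to all Sobolev orders by stability of the class $\{\Op(\sigma')_{\eta'}\}$ under composition with $(P+1)^k$. The only packaging difference is that the paper absorbs the factor $\bigl((y+y')/(y-y')\bigr)^N$ into a new cutoff $\eta_N\in S(\R,\alpha)$, writing $\Op(\sigma)_\eta = (ih/2)^N\Op(X_Y^N\sigma)_{\eta_N}$, and then invokes Lemma~\ref{lemma:roughL2boundedness} directly rather than redoing the Schur estimate; your version is more self-contained but otherwise identical in content.
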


\begin{proof}
We first give bounds on $L^2$. Recall that $K^w_\sigma$ is the kernel of the operator in \eqref{eq:Weyl-quantization-Rd}. Similarly to \eqref{eq:Differentiating-J}, we have:
\begin{align}
\frac{y - y'}{ i  h \frac{y+y'}{2}} K^w_\sigma &= K^w_{X_Y \sigma}.\label{eq:Differentiating-Y}\\
\intertext{from this, we deduce that for all $N\in \N$, }
\Op(\sigma)_\eta &= ( i  h/2)^{N}\Op((X_Y)^N\sigma)_{\eta_N},\notag\\
\intertext{where }
\eta_N\left(t-\frac{1}{t}\right)&= \left(\frac{1+t}{1-t}\right)^{N}\eta\left(t-\frac{1}{t}\right),\notag
\end{align}
so that $\eta_N \in S(\R, \alpha)$. For $N$ big enough, $X_Y^N \sigma \in S_\rho^{-d-2}$. From lemma \ref{lemma:roughL2boundedness}, we thus determine that for all $N\geq 0 $ 
\begin{equation*}
\| \Op_h(\sigma)_\eta \|_{L^2 \to L^2} = \mathcal{O}(h^{N(1-\rho)}),
\end{equation*}
where the constant depends on $\|\eta_N \|_\infty$ --- which is finite since $\alpha \leq 0$.

Now, up to some fixed power of $h$, the $H^{-2N} \to H^{2N}$ norm is bounded by 
\begin{equation}\label{eq:proving_pseudo-local}
\| (P+1)^N \Op_h(\sigma)_\eta (P+1)^N \|_{L^2 \to L^2}.
\end{equation}
Observe that composition with $X_\theta$ commutes with the $A \to A_\eta$ operation. Further, 
\begin{equation*}
X_y \left[\eta(\frac{y}{y'}-\frac{y'}{y})\right] = \left[\frac{y}{y'}+\frac{y'}{y}\right] \eta'(\frac{y}{y'}-\frac{y'}{y})  = \eta^\ast\left( t - \frac{1}{t}\right) \text{ with }\eta^\ast \in S(\R, \alpha).
\end{equation*}
Combining this with \eqref{eq:Differentiating-Y}, we deduce that if we expand both $(P+1)^N$'s in \eqref{eq:proving_pseudo-local}, we will get a finite sum of $\Op(\sigma^*)_{\eta^*}$, with $\sigma^\ast$ in $S_\rho$, and $\eta^\ast$ in $S(\R, \alpha)$ still vanishing near $0$. We can apply the first part of our proof to conclude.
\end{proof}

\begin{remark}
Actually, if we take $\eta(u) = \tilde{\eta}(u/h^{\rho'})$, and go through the above proof, we see that it works as long as $\rho' < 1-\rho$. We deduce that the kernel of $\Op_h(\sigma)$ is essentially supported at distance $h^{1-\rho}$ of $\{y=y'\}$.
\end{remark}

\subsection{Stationary Phase}

Now that we proved that off-diagonal terms in the kernel of our pseudors give rise to negligible operators, it is legitimate to cutoff the kernels and keep only the part supported near the diagonal. While proving composition formulae, or when changing quantizations, this will produce in the equations expressions of the type
\begin{equation*}
\sigma_1(x_0, \xi_0) \times \sigma_2(x_1, \xi_1) \times \chi( x_0, x_1)
\end{equation*}
where $\chi(x_1, x_2)$ is a function of $y_1/y_2$ supported near $1$. This motivates the introduction of
\begin{definition}\label{definition:multi-variable_symbols}
For $\epsilon>0$, let $\Omega_{k,\epsilon}$ be the subset of $(T^\ast Z)^{k+1}$ :
\begin{equation*}
\Omega_{k, \epsilon} = \left\{ (x_0, \xi_0 ; x_1, \xi_1, \dots, x_k, \xi_k) \in (T^\ast Z)^{k+1} \quad | \quad \forall i, \epsilon \leq \left|\frac{y_i}{y_0}\right| \leq 1/\epsilon \right\}.
\end{equation*}
Let $\sigma$ be some smooth function $(T^\ast Z)^{k+1}\times [0,h_0[$, supported in some $\Omega_{k,\epsilon}$. We say that $\sigma$ is a $(k, \rho)$-symbol if it is a symbol w.r.t the weights
\begin{equation*}
\{ \langle y_0 \xi_0 \rangle^{\beta_0} \dots \langle y_0 \xi_k \rangle^{\beta_k} | (\beta_0, \dots, \beta_k) \in \R^{k+1} \}
\end{equation*}
and the vector fields $X_{x,i}=y_0 \partial_{x_i}$ and $X_{\xi,i}=1/y_0 \partial_{\xi_i}$, losing a constant $h^{-\rho}$ when differentiating. By this we mean that there is $\beta \in \R^{k+1}$ such that whenever $\alpha$ is a finite sequence of indices $\alpha_j \in \{ (x,i), (\xi,i) | i=0 \dots k \}$, if $n_i$ is the number of $(\xi,i)$ in the sequence, 
\begin{equation}\label{eq:semi-norms_multivariable}
|X_\alpha \sigma | \leq C_\alpha \langle y_0 \xi_0 \rangle^{\beta_0 - n_0} \dots \langle y_0 \xi_k \rangle^{\beta_k - n_k} \text{ for some constant $C_\alpha >0$.}
\end{equation}
In particular, a $(0,\rho)$-symbol is just a symbol in $S_\rho$. The semi-norms defined in \eqref{eq:semi-norms_multivariable} give a topology to the space of $(k,\rho)$-symbols.
\end{definition}

For $\sigma$ a $(k,\rho)$-symbol, we define the following function on $T^\ast Z$ :
\begin{equation*}
T_k \sigma : (x, \xi, h) \mapsto (2\pi h)^{-k (d+1)}\int_{i=1\dots k}  e^{\frac{ i }{h} [\sum \langle x_i-x,\xi_i - \xi \rangle ]} \sigma(x, \xi ; (x_1,\xi_1), \dots, (x_k,\xi_k))  d x_i d \xi_i,
\end{equation*}
where the integration has been taken over the universal cover $T^\ast(\Hh^{d+1})^k$. 
\begin{remark}
$T_k \sigma$ is well defined. Indeed, if we first perform the integration in the $\xi_i$ variables, we obtain Fourier transforms of symbols. Those are distributions whose singular support is reduced to $\{0\}$ and are decreasing faster than any power at infinity. Using compact support in $y_{1, \dots, k}$ --- depending on $y_0$ --- we see that such distributions can be integrated against $1$.
\end{remark}

Recall the notation
\begin{equation*}
\nabla_x.\nabla_\xi  = \partial_y \partial_Y + \sum_{i=1\dots d} \partial_{\theta_i}\partial_{J_i}
\end{equation*}

\begin{prop}\label{prop:Stationary_phase}
Let $\sigma$ be some $(k,\rho)$-symbol, of order $\beta$. Then $T_k \sigma$ is in $S^{|\beta|}_\rho(Z)$ and we have the following expansion :
\begin{equation*}
T_k \sigma(x,\xi, h) \sim \sum_{\alpha \in \N^k} \frac{( i  h)^{|\alpha|}}{\alpha !} \left[\prod_1^k (\nabla_{x_i}.\nabla_{\xi_i})^{\alpha_i} \right] \sigma(x, \xi; x_1,\xi_1, \dots, x_k, \xi_k )_{|(x_i,\xi_i)=(x,\xi)}.
\end{equation*}
In addition, $T_k$ is continuous from the space of $(k,\rho)$-symbols to $S_\rho$.
\end{prop}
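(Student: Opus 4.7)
My plan is to reduce $T_k$ to a standard oscillatory integral on $\R^{2k(d+1)}$ via a rescaling adapted to the cusp metric, apply the classical stationary phase lemma, and then verify the resulting symbol estimates.

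\textbf{Step 1 (normalize the phase).} For $i = 1, \dots, k$ I would substitute $u_i := (x_i - x)/y$ and $\eta_i := y(\xi_i - \xi)$, where $y$ is the height of the base point $x$. The Jacobian is $1$, the phase becomes the flat symplectic pairing $\sum_i \langle u_i, \eta_i\rangle$ with a unique non-degenerate critical point at the origin, and the vector fields $X_{x,i}, X_{\xi,i}$ of Definition \ref{definition:multi-variable_symbols} pull back to $\partial_{u_i}, \partial_{\eta_i}$. Setting $\tilde\sigma(x,\xi; u, \eta) := \sigma(x, \xi; x + yu_1, \xi + \eta_1/y; \dots; x+yu_k, \xi+\eta_k/y)$, the support condition $\Omega_{k,\epsilon}$ restricts each $u_{i,y}$ to the compact interval $[\epsilon-1, \epsilon^{-1}-1]$, and the symbolic bounds translate into
\begin{equation*}
|\partial_u^\alpha \partial_\eta^\gamma \tilde\sigma| \leq C\, h^{-\rho(|\alpha|+|\gamma|)} \langle y\xi\rangle^{\beta_0}\prod_{i=1}^k \langle y\xi + \eta_i\rangle^{\beta_i - \gamma_i},
\end{equation*}
since $y_0\xi_i = y\xi + \eta_i$ in the new coordinates.

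\textbf{Step 2 (stationary phase).} Applied pair by pair, I would split $\tilde\sigma$ with a cutoff $\chi$ localized near $(u, \eta) = (0, 0)$. The non-stationary piece $(1-\chi)\tilde\sigma$ yields an $\mathcal{O}(h^\infty)$ contribution by iterated integration by parts using the vector field $\langle u, \nabla_\eta\rangle$ where $|u|$ dominates and $\langle \eta, \nabla_u\rangle$ where $|\eta|$ dominates. On the stationary piece, Taylor-expanding $\tilde\sigma$ at the origin and using the identity $u^\alpha e^{i\langle u, \eta\rangle/h} = (-ih)^{|\alpha|}\partial_\eta^\alpha e^{i\langle u,\eta\rangle/h}$ followed by Fourier inversion in $u$ produces, pair by pair, exactly $\frac{(ih)^{|\alpha|}}{\alpha!} (\prod_i (\nabla_{x_i}\cdot\nabla_{\xi_i})^{\alpha_i})\sigma$ restricted to the diagonal $(x_i, \xi_i) = (x, \xi)$. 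Truncation at order $N$ leaves a remainder controlled by $|\alpha| + |\gamma| > N$ derivatives of $\tilde\sigma$ — each costing $h^{-\rho}$ — against the $h^{|\alpha|}$ gain from stationary phase, for a net size $\mathcal{O}(h^{N(1-2\rho)}) \langle y\xi\rangle^{|\beta| - N}$; this is negligible for $N \to \infty$ since $\rho < 1/2$.

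\textbf{Step 3 (symbol class and continuity).} To confirm $T_k\sigma \in S_\rho^{|\beta|}(Z)$, I would apply the hyperbolic vector fields $X_y, X_\theta, X_Y, X_J$ to the parameters $(x, \xi)$; by the chain rule applied to the substitutions defining $\tilde\sigma$, they commute into the integral as linear combinations of the $X^0$- and $X^i$-vector fields on the multi-symbol, modulated by the bounded factors $y_i/y \in [\epsilon, 1/\epsilon]$. Each such operation preserves the $(k,\rho)$-class and shifts the weights correctly, so every term of the truncated expansion, as well as the remainder, is a hyperbolic symbol of the claimed order. The bound is linear in finitely many semi-norms of $\sigma$, giving continuity of $T_k$.

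The main obstacle is the bookkeeping of the weights $\langle y\xi + \eta_i\rangle$: they are symbolic in $\eta_i$ but centered at $-y\xi$ rather than at $0$, and must collapse after integration to yield decay in $\langle y\xi\rangle$ uniformly in $(x, \xi)$. Handling this carefully in the non-stationary region — in particular splitting according to whether $|\eta_i|$ or $|u_i|$ dominates and always integrating by parts with the large variable — is the place where a naive estimate would lose powers of $\langle y\xi\rangle$ and where the bulk of the technical work sits.
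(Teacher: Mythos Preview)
Your approach is correct and leads to the same conclusion as the paper's, but the organization is different enough to be worth comparing. The paper does \emph{not} treat all $k$ at once: it observes that $T_{k+1}=T_1\circ T_k$ and reduces by induction to the case $k=1$ (this is where the continuity statement is actually used). For $T_1$ it changes variables to $(v,V)=(x_1-x,\xi_1-\xi)$ without your $y$-rescaling, Taylor-expands in the $V$ variable only, and integrates by parts in $v$; the remainder is then recognized as $(ih)^{n+1}\int_0^1 T_1\sigma^\ast(\cdot,th)\,dt$ applied to a symbol whose order in the second slot has dropped by $n+1$. The weight issue you flag in your last paragraph is isolated as a separate ``basic control'' lemma: if the $(1,\rho)$-symbol has strictly negative order $k_1$ in its second slot, then $|T_1\sigma|\leq C\langle y\xi\rangle^{k_0+k_1}$. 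That lemma is proved by introducing the $y$- and $h$-adapted cutoffs $\chi(h^\rho yW)$ and $\chi(h^{-\rho}v/y)$, integrating by parts, and then splitting the resulting integral according to whether $|W|\gtrless\varepsilon|y\xi|$ --- exactly the secondary splitting you anticipate. So the paper's version is more modular (induction plus a clean basic-bound lemma), while yours front-loads the $y$-normalization and runs stationary phase in one pass; analytically the two are the same, and the ``bulk of the technical work'' you identify is precisely the content of the paper's Lemma~\ref{lemma:basic_control}.
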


\begin{remark}\label{rem:terms_expansion}
All the terms in the expansion are in the right symbol class : if $\sigma$ is a $(k,\rho)$-symbol of order $\beta$, the terms with coefficient $h^{|\alpha|}$ are in finite number, and are symbols in $S^{|\beta| -|\alpha|}_\rho$.

We will only use this proposition for $k=1$ and $k=2$
\end{remark}

\begin{proof}
We prove the result by induction on $k$. First, if $k=0$, this is obvious, since $T_0 \sigma = \sigma$. Now, if we assume it is true for $k$, let $\sigma$ be some $(k+1,\rho)$ symbol. Then, we can consider that $\sigma$ is a $(k,\rho)$ symbol in its $k$ first coordinates, with the last coordinates as parameters. Applying $T_k$ on those first coordinates, we obtain that $T_k \sigma$ is a $(1,\rho)$ symbol by the induction hypothesis (here, the continuity of $T_k \sigma$ is important). Then, we remark that $T_{k+1} \sigma = T_1 T_k \sigma$.

Hence, proving the announced property for $T_1$ is sufficient. Assume for now that :
\begin{lemma}\label{lemma:basic_control}
If $\sigma$ is a $(1,\rho)$ symbol of order $(k_0,k_1)$ with $k_1 <0$, for some constant
\begin{equation*}
| T_1 \sigma(x,\xi,h)| \leq C \langle y\xi \rangle^{k_0+k_1}.
\end{equation*}
\end{lemma}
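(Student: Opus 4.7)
The plan is to reduce the defining integral to a standard non-degenerate oscillatory integral by rescaling, then bound it via iterated integration by parts combined with Peetre's inequality.

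First I would substitute $x_1 = x + y_0 u$ and $\xi_1 = \xi + v/y_0$ (Jacobian $1$) in the defining integral for $T_1\sigma$. The phase simplifies to $\langle u,v\rangle$, and, writing $\tilde\sigma(u,v) := \sigma(x,\xi;x+y_0u, \xi+v/y_0)$,
\[
T_1\sigma(x,\xi,h) = (2\pi h)^{-(d+1)}\int e^{i\langle u,v\rangle/h}\tilde\sigma(u,v)\,du\,dv.
\]
Because $\partial_u$ and $\partial_v$ in the rescaled coordinates act exactly as the vector fields $X_{x,1}$ and $X_{\xi,1}$ of Definition \ref{definition:multi-variable_symbols}, the $(1,\rho)$-symbol estimates for $\sigma$ translate to
\[
|\partial_u^\alpha\partial_v^\beta\tilde\sigma(u,v)| \leq C_{\alpha,\beta}\langle y_0\xi\rangle^{k_0}\langle y_0\xi + v\rangle^{k_1-|\beta|}.
\]
The $\Omega_{1,\epsilon}$-support cutoff keeps $u_y$ in a bounded interval, while $u_\theta$ ranges over $\R^d$ with $\tilde\sigma$ merely $(\Lambda/y_0)$-periodic.

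A further substitution $v = hw$ eliminates $h$ both from the phase and the prefactor, giving
\[
T_1\sigma = (2\pi)^{-(d+1)}\int e^{i\langle u, w\rangle}\tilde\sigma(u, hw)\,du\,dw,
\]
a non-semiclassical oscillatory integral whose $h$-dependence sits only in the amplitude. I would then integrate by parts using the classical identities $(1-\Delta_w)e^{i\langle u,w\rangle} = (1+|u|^2)e^{i\langle u,w\rangle}$ and $(1-\Delta_u)e^{i\langle u,w\rangle} = (1+|w|^2)e^{i\langle u,w\rangle}$, applying the first $N$ times and the second $M$ times for $N > (d+1)/2$ and $M$ sufficiently large. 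Each $\Delta_w$ landing on $\tilde\sigma(u,hw)$ produces a factor $h^2$ from the chain rule together with a $\partial_v^2$-derivative, so the amplitude is a finite sum of terms of the form $h^{2l}(\partial_u^{2j}\partial_v^{2l}\tilde\sigma)(u,hw)$ with $j\leq M$, $l\leq N$, each bounded by $Ch^{2l}\langle y_0\xi\rangle^{k_0}\langle y_0\xi + hw\rangle^{k_1-2l}$. Since $k_1-2l \leq 0$, Peetre's inequality yields
\[
\langle y_0\xi + hw\rangle^{k_1-2l} \leq C\langle y_0\xi\rangle^{k_1-2l}\langle hw\rangle^{2l-k_1}.
\]
The $u$-integral converges by the $(1+|u|^2)^{-N}$ factor, and the $w$-integral $\int(1+|w|^2)^{-M}\langle hw\rangle^{2l-k_1}\,dw$ is $O(1)$ in $h$: split into $|hw|\leq 1$, where $\langle hw\rangle$ is bounded, and $|hw|\geq 1$, where the substitution $\tau = hw$ shows that for $M$ large enough the rescaled integral is uniformly bounded. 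Each term thus contributes $Ch^{2l}\langle y_0\xi\rangle^{k_0+k_1-2l}$; since $\langle y_0\xi\rangle\geq 1$ and $h\in(0,1]$, summing over $j,l$ gives $|T_1\sigma| \leq C\langle y_0\xi\rangle^{k_0+k_1}$, as required.

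The main obstacle is that $\tilde\sigma$ is only periodic (hence bounded but not decaying) in the $u_\theta$-direction over the universal cover, so the naive pointwise bound on the integrand is not absolutely integrable. Integration by parts in $v$ is what furnishes the $(1+|u|^2)^{-N}$ factor needed for $u_\theta$-convergence; the substitution $v=hw$ is what exactly cancels the semiclassical prefactor $(2\pi h)^{-(d+1)}$ and lets the $h$-powers from the chain rule balance against the symbol weights to yield a bound independent of $h$.
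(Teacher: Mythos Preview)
Your argument has a gap that breaks the proof for $\rho>0$. When you write
\[
|\partial_u^\alpha\partial_v^\beta\tilde\sigma(u,v)| \leq C_{\alpha,\beta}\langle y_0\xi\rangle^{k_0}\langle y_0\xi + v\rangle^{k_1-|\beta|},
\]
you have silently dropped the factor $h^{-\rho(|\alpha|+|\beta|)}$ that Definition~\ref{definition:multi-variable_symbols} explicitly imposes (``losing a constant $h^{-\rho}$ when differentiating''). For the $\partial_v$-derivatives this omission is harmless: each $\Delta_w$ landing on $\tilde\sigma(u,hw)$ brings a compensating $h^2$ from the chain rule, so the net contribution is $h^{2(1-\rho)}$ per application. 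But the $\partial_u$-derivatives produced by your $(1-\Delta_u)^M$ integration by parts carry no such compensation: each $\Delta_u$ on $\tilde\sigma$ costs $h^{-2\rho}$, and since $M$ must exceed roughly $N - k_1/2 + (d+1)/2$ for your $w$-integral to converge after Peetre, your bound carries an overall factor $h^{-2\rho M}$ that blows up as $h\to 0$. The constant in the lemma is meant to be uniform in $h$ --- that is exactly what Proposition~\ref{prop:Stationary_phase} needs to conclude $T_1\sigma\in S_\rho^{|\beta|}$ --- so this is a real failure, not a cosmetic loss.

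The paper's proof is organised precisely to neutralise these $h^{-\rho}$ losses: one inserts cutoffs at the exotic scale, splitting $1 = \chi(h^\rho yW) + (1-\chi)(h^\rho yW)$ and integrating by parts in the spatial variable only on the second piece, where $|h^\rho yW|\gtrsim 1$. Each such integration by parts costs $h^{-\rho}$ from the symbol estimate but gains $(h^\rho yW)^{-1}$ from the denominator, and these cancel. A dual cutoff $\chi(h^{-\rho}v/y)$ handles the frequency direction, and there the net gain per integration by parts is $h^{1-2\rho}>0$. Your scheme can be repaired by the same device (replace $(1-\Delta_u)$ by $(1-h^{2\rho}\Delta_u)$ and adjust the $w$-side accordingly), but as written it only establishes the lemma for $\rho=0$.
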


Let $\sigma$ be some $(1,\rho)$ symbol of order $(k_0, k_1)$. Changing variables $(v,V)=(x_1,\xi_1)-(x,\xi)$ in $T_1 \sigma$,
\begin{equation}\label{eq:changing_variables_T_1}
T_1 \sigma : (x, \xi, h) \mapsto (2\pi h)^{-(d+1)}\int  e^{\frac{ i }{h} \langle v,V \rangle} \sigma(x, \xi ;(x,\xi) + (v,V))  d v d V.
\end{equation}
Hence, the following identities hold :
\begin{align*}
X_y T_1 \sigma &= T_1(y_0\partial_{y_0} \sigma) + T_1(y_0\partial_{y_1}\sigma) \\
X_\theta T_1 \sigma &= T_1(y_0\partial_{\theta_0}\sigma) + T_1(y_0\partial_{\theta_1}\sigma) \text{ where $\theta_i$ is the $\theta$ coordinate for $x_i$} \\
X_Y T_1\sigma &= T_1\left(\frac{1}{y_0}(\partial_{Y_0} \sigma +\partial_{Y_1} \sigma)\right)\\
X_J T_1\sigma &= T_1\left(\frac{1}{y_0}(\partial_{J_0} \sigma +\partial_{J_1} \sigma)\right).
\end{align*}

We deduce then from \ref{lemma:basic_control} that $T_1\sigma$ is in $S_\rho$ with the correct order whenever $k_1 < 0$. Now, in the general case, we apply Taylor's formula to the $V$ variable :
\begin{equation*}
\begin{split}
\sigma(x,\xi; x+v, \xi + V) &= \\
\sum_{s = 0}^n \frac{1}{s !}  d _{\xi_1}^s \sigma( x,\xi;(x&+ v,\xi)).  V^{\otimes s}+ \int_0^1 \frac{(1-t)^n}{n !} ( d _{\xi_1}^{n+1} \sigma)(x,\xi ; x+v, \xi + tV) .V^{\otimes n+1}  d t.
\end{split}
\end{equation*}
Plugging this in the formula for $T_1\sigma$, integrating by parts in the $v$ variable, we obtain a sum
\begin{equation}\label{eq:asymptotic_expansion_first_variable}
\sum_{s=0}^n \frac{( i  h)^s}{s !} (\nabla_{x_1}.\nabla_{\xi_1})^s \left( \sigma(x,\xi ; x_1, \xi_1)\right)_{| (x_1, \xi_1) = (x, \xi)}
\end{equation}
and a remainder term
\begin{equation*}
(2\pi h)^{-(d+1)}( i  h)^{n+1}\int_0^1 \frac{(1-t)^n}{n !} \int e^{\frac{ i }{h} \langle v, V\rangle} \left[(\nabla_{x_1}.\nabla_{\xi_1})^{n+1} \sigma \right](x, \xi ; x+v, \xi + tV) d v d V   d t
\end{equation*}
Actually, after rescaling $V$ by a factor $t$, this remainder term is seen to be 
\begin{equation*}
( i  h)^{n+1}\int_0^1 \frac{(1-t)^n}{n !} T_1 \sigma^\ast (x,\xi, th)  d t
\end{equation*}
where $\sigma^\ast$ is of order $(k_0, k_1 - n- 1)$ and depends continuously on $\sigma$ (we only took a finite number of derivatives). If we take $n \geq k_1$, we already know that $T_1 \sigma^\ast$ is a symbol, so that the remainder is $\mathcal{O}(h^{n+1})$ is $S_\rho^n$, with constants depending on a finite number of derivatives of $\sigma$. Together with remark \ref{rem:terms_expansion}, this is enough to conclude.
\end{proof}

Now, let us prove lemma \ref{lemma:basic_control}.

\begin{proof} 
We rescale variable $V$ in \eqref{eq:changing_variables_T_1} to $W=V/h$, absorbing the $h^{-d-1}$ constant. Let $\chi\in C^\infty_c(\R^{d+1})$ equal $1$ near $0$, and break the integral into two parts with $1= (\chi + (1-\chi))(h^\rho yW)$. In the part with $1-\chi$, we can also introduce $1= (h^\rho yW)^{2N}/(h^\rho yW)^{2N}$ for some $N$ big enough, and get
\begin{equation*}
\int e^{ i  \langle v, W\rangle} \left[\chi(h^\rho y W) +(1-\chi(h^\rho y W))\frac{(h^\rho yW)^{2N}}{(h^\rho yW)^{2N}} \right] \sigma(x,\xi; x+v, \xi + hW) d v d W 
\end{equation*}
If we integrate the second term by parts in the $v$ variable $2N$ times, we get rid of the $(h^\rho yW)^{2N}$ on top. We see that for both terms we obtain an expression of the form
\begin{equation*}
\int e^{ i  \langle v, W\rangle} \psi(h^\rho yW) \sigma^\ast(x,\xi; x+v, \xi + hW) d v d W 
\end{equation*}
where either $(\sigma^\ast,\psi) = (\sigma, \chi)$ or $(\sigma^\ast, \psi) = (h^{2\rho N}y^{2N}(\partial_{y_1}^2 + \partial_{\theta_1}^2)^N\sigma,(1-\chi(x))/x^{2N})$. In both cases, $\sigma^\ast$ has the same properties as $\sigma$ (including support, bounds, and order), and $\psi$ is some symbol on $\R^{d+1}$ in the usual sense, of order $-2N$. We apply the same trick in the $v$ variable now, introducing
\begin{equation*}
1=\chi(h^{-\rho}v/y) + (1-\chi)(h^{-\rho}v/y)\frac{(h^{-\rho}v/y)^{2N}}{ (h^{-\rho}v/y)^{2N}}
\end{equation*} 
and integrating by parts in the $W$ variable for the second term. When differentiating $\psi$, the powers of $h$ compensate; when differentiating $\sigma^\ast$, we gain a positive power $h^{1-2\rho}$.In the end, we get new expressions of the form
\begin{equation*}
\int e^{ i  \langle v, W\rangle} \psi(h^\rho yW) \tilde{\psi}(h^{-\rho}v/y)\sigma^\ast(x,\xi; x+v, \xi + hW) d v d W 
\end{equation*}
where $\sigma^\ast$ still has the same properties as $\sigma$, and $\psi, \tilde{\psi}$ are some symbols on $\R^{d+1}$ in the usual sense, of order $-2N$. We can take the $L^1$ norm of the integrand, and it is bounded by :
\begin{align*}
 & C \int_{\R^{2(d+1)}} \langle h^\rho y W \rangle^{-2N} \langle h^{-\rho}v/y\rangle^{-2N} \langle y(\xi + h W) \rangle^{k_1} \langle y\xi \rangle^{k_0} dv dW. \\
\intertext{rescaling both $v$ and $W$, this is bounded by}
			& C \langle y\xi \rangle^{k_0} \int_{\R^{2(d+1)}} \langle v\rangle^{-2N} \langle  W \rangle^{-2N} \langle y\xi + h^{1-\rho} W \rangle^{k_1} dv dW \\
			&\leq C \langle y\xi \rangle^{k_0} \int_{\R^{d+1}} \langle  W \rangle^{-2N} \langle y\xi + h^{1-\rho} W  \rangle^{k_1} dW \quad \text{ for $N>d$.}
\end{align*}
We break the integral into two parts : $\{|W| > \varepsilon |y\xi|\}$ and $\{ |W| \leq \varepsilon |y \xi| \}$. Since $k_1<0$, $\langle y\xi + hW\rangle^{k_1} \leq 1$, and the first part is bounded by 
\begin{equation*}
C\langle y\xi\rangle^{k_0}\int_{|W| >\varepsilon |y\xi|}\langle W \rangle^{-2N} = \mathcal{O}(\langle y \xi \rangle^{k_0 + d- N + 1})=\mathcal{O}(\langle y \xi \rangle^{k_0 + k_1}) \text{ when $N \geq k_1 + d+1$.}
\end{equation*}
The second part is bounded by 
\begin{equation*}
C\langle y\xi\rangle^{k_0+k_1}\times \int_\R \langle W \rangle^{-2N}=\mathcal{O}(\langle y \xi \rangle^{k_0 + k_1}).
\end{equation*}

\end{proof}

\subsection{Symbolic calculus consequences}

We start this section by proving that the class of operator $\Psi_\rho$ is stable by composition.
\begin{prop}\label{prop:composition_lemma}
Let $a\in S^m_\rho(Z)$ and $b\in S^n_\rho(Z)$. Then, there is a symbol $c\in S^{m + n}_\rho(Z)$ and a negligible family of operators $R_h \in \mathcal{O}(h^{\infty})\Psi^{-\infty}$ such that 
\begin{equation*}
\Op(a)\Op(b) = \Op(c) + R_h
\end{equation*}
where
\begin{equation}\label{eq:asymptotic_expansion_product}
c(x,\xi) \sim \sum_{\alpha\in \N^2} \frac{(-1)^{\alpha_2}( i  h)^{|\alpha|}}{2^{|\alpha|}\alpha !} (\nabla_{x_1} . \nabla_{\xi_2})^{\alpha_1}(\nabla_{x_2} . \nabla_{\xi_1})^{\alpha_2} a(x_1, \xi_1)b(x_2, \xi_2) _{| x_1 = x_2 =x, \xi_1= \xi_2 = \xi}
\end{equation}
\end{prop}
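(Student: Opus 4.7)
The plan is to reduce to a Euclidean Weyl composition on the universal cover $\mathbb{H}^{d+1}$ and extract the expansion via Proposition \ref{prop:Stationary_phase} with $k=2$. As a first step, choose a compactly supported cutoff $\chi$ equal to $1$ near $0$ and let $\psi = 1 - \chi$, so $\psi \in S(\R, 0)$ vanishes near $0$. Proposition \ref{prop:support_around_y-diagonal} makes $\Op(a)_\psi$ and $\Op(b)_\psi$ negligible, hence by Corollary \ref{cor:composition_negligible}, $\Op(a) \Op(b) = \Op(a)_\chi \Op(b)_\chi + \mathcal{O}(h^\infty) \Psi^{-\infty}$. The cutoff composition has kernel supported where all three $y$-coordinates are comparable.

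Next, insert \eqref{eq:summing_kernel_quotient} into the formula for the kernel of $\Op(a)_\chi \Op(b)_\chi$. The inner sum $\sum_{k'} K^w_b(\cdot, x'' + (0, k'))$ is $\Lambda$-periodic in the intermediate $\theta'$-variable, since $K^w_b(x + (0, \ell), x' + (0, \ell)) = K^w_b(x, x')$ for $\ell \in \Lambda$ by $\Lambda$-periodicity of $b$ in $\theta$. This periodicity lets one unfold the outer $\Lambda$-sum; the $\chi$ cutoffs ensure $y' > 0$ on the support, so the resulting integration over $\mathbb{H}^{d+1}$ coincides with the full Euclidean integration over $\R^{d+1}$. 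One arrives at
\begin{equation*}
K_{\Op(a)_\chi \Op(b)_\chi}(x, x'') = \left(\frac{y}{y''}\right)^{(d+1)/2} \sum_{k \in \Lambda} K^w_c(x, x'' + (0, k)),
\end{equation*}
where $K^w_c$ is the Weyl kernel on $\mathbb{H}^{d+1}$ of the classical Euclidean Weyl composition of the cutoff lifts of $a$ and $b$. This $c$ is $\Lambda$-periodic in $\theta$ and descends to a function on $T^\ast Z$.

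Now extract the asymptotic expansion of $c$. Starting from the Moyal oscillatory integral on $\R^{d+1}$ representing the classical Weyl composition, a linear change of variables centring around $(x, \xi)$ — namely a swap and sign flip of the two $\xi$-variables combined with a rescaling by a factor of $2$ — converts the symplectic Moyal phase into the diagonal $T_2$-phase of Proposition \ref{prop:Stationary_phase}. One obtains $c(x, \xi, h) = T_2 \Sigma(x, \xi, h)$ for a $2$-symbol $\Sigma(x, \xi; x_1, \xi_1; x_2, \xi_2)$ built from $a$, $b$ with small shifts of order $(\xi_i - \xi)/2$ in their cotangent arguments and decorated by the $\chi$ cutoffs. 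One checks that $\Sigma$ is a $(2, \rho)$-symbol in the sense of Definition \ref{definition:multi-variable_symbols}. Proposition \ref{prop:Stationary_phase} then yields $c \in S^{m+n}_\rho(Z)$ together with an asymptotic expansion; expanding the shifts in Taylor series and collecting terms produces exactly \eqref{eq:asymptotic_expansion_product}, with the factors $(-1)^{\alpha_2}/2^{|\alpha|}$ arising from the $\pm (\xi_i - \xi)/2$ shifts, and the $\chi$ cutoffs contributing trivially since they equal $1$ near the stationary point $x_1 = x_2 = x$, $\xi_1 = \xi_2 = \xi$. Truncating the expansion at order $N$ leaves a remainder in $h^N S^{-N}_\rho(Z)$, which by Proposition \ref{prop:Sobolev_boundedness} is negligible.

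The main obstacle is the bookkeeping of the linear change of variables in the third step, converting the symplectic Moyal phase into the diagonal $T_2$-phase and producing the exact coefficients $(-1)^{\alpha_2}/(2^{|\alpha|}\alpha!)$, together with verifying that the shift-and-cutoff-decorated $2$-symbol $\Sigma$ satisfies the weighted estimates of Definition \ref{definition:multi-variable_symbols} uniformly in $h$. The remainder of the argument — pseudo-locality, Sobolev control of remainders, and removal of the cutoff — is routine from the preceding lemmas.
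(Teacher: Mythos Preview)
Your proposal is correct and follows essentially the same route as the paper: truncate near the $y$-diagonal via Proposition~\ref{prop:support_around_y-diagonal}, pass to the Weyl composition on $\Hh^{d+1}$, and after a linear change of variables recognize the result as $T_2$ applied to a $(2,\rho)$-symbol so that Proposition~\ref{prop:Stationary_phase} yields both $c\in S^{m+n}_\rho$ and the Moyal expansion. The paper's change of variables is slightly cleaner than your description --- it produces $c=T_2(\sigma_2\chi_2)(x,\xi,h/2)$ with $\sigma_2=a(x_2,\xi_1)b(x_1,\xi_2)$ and no shifts in the cotangent arguments, the factor $2^{-|\alpha|}$ coming from the $h/2$ and the $(-1)^{\alpha_2}$ from the swap --- but this is only a bookkeeping difference.
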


\begin{proof}
First, we choose a truncation $\eta \in C^\infty_c(\R)$ equal to $1$ around the origin. Then $1-\eta$ is a symbol in $S(\R,0)$ vanishing around $0$, so we can apply proposition \ref{prop:support_around_y-diagonal}, and replace $\Op(a)$ and $\Op(b)$ by respectively $\Op(a)_{\eta}$ and $\Op(b)_{\eta}$. Recalling corollary \ref{cor:composition_negligible}, there exists $R_h \in \mathcal{O}(h^{\infty})\Psi^{-\infty}$ such that
\begin{equation*}
\Op(a)\Op(b) = \Op(a)_{\eta}\Op(b)_{\eta} + R_h.
\end{equation*}

If $K^w_\sigma$ is the kernel of $\mathbf{Op}^w_h(\sigma)$ on $\Hh^{d+1}$ as in \eqref{eq:Weyl-quantization-Rd}, we have
\begin{align*}
\sigma(x,\xi) &= \int e^{\frac{ i }{h}\langle u, \xi\rangle}  K^w_\sigma\left(x - \frac{u}{2}, x + \frac{u}{2}\right) d u . \\
\intertext{Since both $\mathbf{Op}^w_h(a)_{\eta}$ and $\mathbf{Op}^w_h(b)_{\eta}$ act on $\Hh^{d+1}$, the product also, and its kernel on $\Hh^{d+1}$ is}
K^w(x,\tilde{x}) &= \int K^w_a(x,x') K^w_b(x',\tilde{x}) \eta\left(\frac{y_{x'}}{y_x}-\frac{y_x}{y_{x'}}\right)\eta\left(\frac{y_{\tilde{x}}}{y_{x'}}-\frac{y_{x'}}{y_{\tilde{x}}}\right) dx'\\
\intertext{Hence, the solution to our problem is (formally) the function $c$ defined by}
c(x,\xi) &= h^{-2d-2}\int e^{\frac{ i }{h}\phi} \sigma_1(u,x', \xi_1, \xi_2)\chi\left(\frac{y+y_u/2}{y'},\frac{y-y_u/2}{y'}\right)du d\xi_1 dx' d\xi_2\\
\intertext{integrating over $(T^\ast \R^{d+1})^2$, where}
\phi &= \langle u,\xi\rangle + \langle x-u/2-x', \xi_1\rangle + \langle x'-x-u/2, \xi_2 \rangle \\
\sigma_1(u,x', \xi_1, \xi_2)	&= a\left(\frac{x-u/2+x'}{2}, \xi_1 \right) b\left(\frac{x+u/2+x'}{2}, \xi_2 \right)\\
\intertext{and $\chi$ is a smooth function on $\R^2$ supported in a rectangle } 
 &\{ (\tau,\kappa) \in \R^2 | 0< \epsilon \leq \tau \leq 1/\epsilon \quad 0< \epsilon \leq \kappa \leq 1/\epsilon\} \\
\intertext{After a change of variables, we will be able to use our stationary phase lemma. Let }
x_1 &= \frac{1}{2}(x + u/2 + x') \quad x_2 = \frac{1}{2}(x - u/2 + x') \\
\intertext{we get to write $c$ in the suitable fashion}
c(x,\xi) &=\left(\frac{2}{h}\right)^{-2d-2} \int e^{\frac{2 i }{h}( \langle x-x_1, \xi-\xi_1\rangle + \langle x-x_2, \xi-\xi_2\rangle)} \sigma_2 \chi_2 = T_2(\sigma_2 \chi_2)(x,\xi,h/2). \\
\intertext{where}
\sigma_2 &= a\left( x_2, \xi_1\right) b\left( x_1 , \xi_2 \right) \\
\chi_2	&= \chi\left(\frac{y_1-y_2 + y}{y_1 + y_2 - y},\frac{y_2 - y_1 + y}{y_1 + y_2 - y}\right).
\end{align*}

An elementary computation shows that $\chi_2$ is supported in some $\{\epsilon' y \leq y_{1,2} \leq y /\epsilon'\}$; hence, it is a smooth function of $y_{1,2}/y$ and it will have a good behavior w.r.t vector fields $y\partial_{y_{012}}$. Function $\sigma_2 \chi_2$ is supported in $\Omega_{2,\epsilon'}$. In addition, since the weights $(\langle y_i \xi_j \rangle)_{i=1,2}$ are equivalent to $\langle y_0 \xi_j \rangle$ in $\Omega_{2,\epsilon}$, $\sigma_2$ satisfies the desired estimates in that region, and $\sigma_2\chi_2$ is a $(2,\rho)$-symbol. From proposition \ref{prop:Stationary_phase}, we conclude directly that $c$ is in $S^{\alpha + \beta}_\rho(Z)$.
\end{proof}

\begin{prop}
The adjoint of $\Op_h(a)$ for the $L^2$ inner product is $\Op_h(\overline{a})$, so that real symbols yield self-adjoint operators, which is a key feature of the Weyl quantization.
\end{prop}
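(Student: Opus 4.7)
The plan is to reduce the statement to the $L^2(\R^{d+1})$-self-adjointness of the Euclidean Weyl quantization, by exploiting the conjugation structure built into Definition~\ref{definition:Quantization}. The multiplication operator $U : f \mapsto y^{-(d+1)/2} f$ is a unitary isomorphism from $L^2(Z)$ onto $L^2(Z, dy\,d\theta)$, since $\|Uf\|^2_{L^2(dy\,d\theta)} = \int y^{-(d+1)} |f|^2\, dy\,d\theta = \|f\|^2_{L^2(Z)}$. By the very definition of the quantization, $\Op_h(\sigma) = U^{-1} \mathbf{Op}^w_h(\lift{\sigma}) U$, so that $\Op_h(\sigma)^* = U^{-1} \mathbf{Op}^w_h(\lift{\sigma})^* U$, where the right-hand adjoint is taken in $L^2(Z, dy\,d\theta)$. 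The proof therefore reduces to showing that $\mathbf{Op}^w_h(\lift{\sigma})^* = \mathbf{Op}^w_h(\overline{\lift{\sigma}})$ as operators on $L^2(Z, dy\,d\theta)$.

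For this I would compare Schwartz kernels on $Z$. Acting on $\Lambda$-periodic functions, $\mathbf{Op}^w_h(\lift{\sigma})$ has kernel $\tilde{K}_\sigma(x,x') = \sum_{k\in\Lambda} K^w_{\lift{\sigma}}(x, x'+k)$, in line with \eqref{eq:summing_kernel_quotient}. Self-adjointness with respect to $dy\,d\theta$ is equivalent to the kernel identity $\overline{\tilde{K}_\sigma(x,x')} = \tilde{K}_{\overline{\sigma}}(x',x)$. On the universal cover, the standard Euclidean Weyl symmetry $\overline{K^w_{\lift{\sigma}}(x,y)} = K^w_{\overline{\lift{\sigma}}}(y,x)$ reads off directly from \eqref{eq:Weyl-quantization-Rd}, giving
\[
\overline{\tilde K_\sigma(x,x')} \;=\; \sum_{k\in\Lambda} K^w_{\overline{\lift\sigma}}(x'+k,\, x).
\]
After reindexing $k \mapsto -k$ (using that $\Lambda$ is a subgroup), it remains to check the pointwise identity $K^w_{\overline{\lift\sigma}}(x'-k, x) = K^w_{\overline{\lift\sigma}}(x', x+k)$ for every $k\in\Lambda$. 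Both sides have the same phase factor $e^{i\langle x'-x-k,\xi\rangle/h}$, and the arguments $\tfrac{x'+x-k}{2}$ and $\tfrac{x'+x+k}{2}$ of $\overline{\lift\sigma}$ differ by $k\in\Lambda$ in the $\theta$-direction. Since $\sigma$ is a function on $Z$, its lift $\overline{\lift\sigma}$ is $\Lambda$-periodic in $\theta$, and the two values coincide.

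There is no real analytic obstacle: for $f, g \in C^\infty_c(Z)$, the absolute convergence of the periodization and the Fubini manipulations are granted by the rapid off-diagonal decay of $K^w_{\lift\sigma}$ already exploited in the proof of Lemma~\ref{lemma:roughL2boundedness}, in particular the pointwise estimate \eqref{eq:growth_away_diagonal}. The whole substance of the proposition is the compatibility between the Euclidean Weyl symmetry and the $\Lambda$-periodicity of the symbol; the step that genuinely uses the geometry of the cusp is precisely the matching of the two periodized kernels, which works because the lattice acts only in $\theta$, a direction in which the Weyl quantization on $\R^{d+1}$ is translation-equivariant.
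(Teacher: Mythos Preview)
Your reduction to the Euclidean Weyl quantization via the unitary conjugation $U:f\mapsto y^{-(d+1)/2}f$ is exactly the paper's approach for the adjoint relation $\Op_h(a)^\ast=\Op_h(\overline a)$; you are simply more explicit than the paper about why the periodization in $\theta$ is compatible with the Weyl kernel symmetry, and that check is correct.

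However, your argument stops at showing that real symbols give \emph{symmetric} operators on $C^\infty_c(Z)$. The statement asserts that they are \emph{self-adjoint}, and for symbols of positive order (unbounded operators) this does not follow from symmetry alone. The paper carries out a second step you have omitted: it invokes the standard criterion that a symmetric operator $A$ is essentially self-adjoint iff $A\pm i$ has dense range, and verifies this by a parametrix. Since $a\pm i$ is elliptic (nonvanishing), the composition calculus of Proposition~\ref{prop:composition_lemma} produces $\sigma_N^\pm$ with
\[
\Op(a\pm i)\,\Op(\sigma_N^\pm)=\mathbb{1}+\mathcal{O}(h^N\Psi^{-N}),
\]
and for $h$ small the right-hand side is invertible on $L^2$, so $\Op_h(a)\pm i$ is surjective. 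You should add this argument; without it, you have only established the formal adjoint identity, not self-adjointness.
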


\begin{proof}
Taking $\mathcal{L} : f\in L^2 (Z) \to y^{-(d+1)/2} f \in L^2(dyd\theta)$, we see that 
\begin{equation*}
\Op(a) = \mathcal{L}^\ast \mathbf{Op}^w_h(a) \mathcal{L}.
\end{equation*}
Since the usual Weyl quantization on $\R^n$ has the property announced for $\Op$, we deduce the first part of the proposition: $\Op_h(a)^\ast = \Op_h(\overline{a})$.

Now, we use proposition 8.5 in appendix A in Taylor \cite{MR2744150}. It suffices to prove that when $a$ is real, $\Op_h(a) \pm i$ is surjective. Since $a$ is real, $a\pm i$ never vanishes, and we can find a symbol $\sigma^\pm_N$ such that 
\begin{equation*}
\Op(a\pm i) \Op(\sigma^\pm_N) = 1 + \mathcal{O}(h^N \Psi^{-N}).
\end{equation*}
for $h$ small enough, the operator on the LHS is invertible. In particular it is surjective, and so is $\Op_h(a\pm i)$.
\end{proof}

\begin{prop}
Let $a$ and $b$ be in $S_\rho(Z)$. Then, with $R_h \in \mathcal{O}(h^{\infty})\Psi^{-\infty}$,
\begin{equation*}
[\Op(a), \Op(b)] = \Op(c) + R_h
\end{equation*}
where $c$ is a semi-classical symbol with an asymptotic expansion with only odd powers of $h$, such that :
\begin{equation*}
c(x,\xi) = \frac{h}{ i } \{a, b\} + \mathcal{O}(h^{3(1-2\rho)} S^{n+ m - 3}_\rho)
\end{equation*}
where $\{.,.\}$ is the Poisson bracket defined with the symplectic form $ d \xi \wedge  d x$ :
\begin{equation*}
\{ f, g \} = \nabla_\xi f . \nabla_x g - \nabla_x f . \nabla_\xi g.
\end{equation*}
\end{prop}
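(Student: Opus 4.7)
The plan is to apply Proposition \ref{prop:composition_lemma} separately to $\Op(a)\Op(b)$ and $\Op(b)\Op(a)$ and subtract the resulting asymptotic expansions term by term; the negligible remainders $R_h$ from each composition combine into the negligible remainder in the statement. Writing $D_1 = \nabla_{x_1}\cdot\nabla_{\xi_2}$ and $D_2 = \nabla_{x_2}\cdot\nabla_{\xi_1}$, the coefficient of $h^k$ in the symbol of $\Op(a)\Op(b)$ reads
\begin{equation*}
c_{ab,k} = \sum_{|\alpha|=k} \frac{(-1)^{\alpha_2} i^k}{2^k \alpha!} \bigl(D_1^{\alpha_1}D_2^{\alpha_2}\,a(x_1,\xi_1)b(x_2,\xi_2)\bigr)\Big|_{x_1=x_2=x,\,\xi_1=\xi_2=\xi}.
\end{equation*}

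To obtain the analogous expression for $\Op(b)\Op(a)$, I would first swap $a$ and $b$ in this formula, and then rename the dummy variables $(x_1,\xi_1)\leftrightarrow(x_2,\xi_2)$ inside the integrand before restricting to the diagonal. The latter substitution exchanges $D_1$ and $D_2$; after further relabeling $\alpha_1\leftrightarrow\alpha_2$ in the sum, I recover exactly the formula for $c_{ab,k}$ but with $(-1)^{\alpha_2}$ replaced by $(-1)^{\alpha_1}$. Subtracting, each summand in $c_{ab,k}-c_{ba,k}$ picks up a prefactor $(-1)^{\alpha_2}-(-1)^{\alpha_1}$, which vanishes as soon as $|\alpha|=k$ is even. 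This gives the claimed odd-parity expansion.

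At leading order $k=1$, the two multi-indices $(1,0)$ and $(0,1)$ yield
$c_{ab,1}-c_{ba,1} = ih\bigl(\nabla_x a\cdot\nabla_\xi b - \nabla_\xi a\cdot\nabla_x b\bigr) = -ih\{a,b\} = (h/i)\{a,b\}$, exactly as stated. For the remainder, the next surviving order is $k=3$. Each $D_i$ applies one $x$-derivative to one factor and one $\xi$-derivative to the other; in the exotic class $S_\rho$, each such derivative costs a factor $h^{-\rho}$, so every $D_i$ contributes $h^{-2\rho}$. Three $D_i$'s acting on a product in $S^m_\rho\otimes S^n_\rho$ therefore produce a symbol in $h^{-6\rho} S^{m+n-3}_\rho$, and together with the explicit $h^3$ prefactor the $k=3$ term lies in $h^{3(1-2\rho)} S^{m+n-3}_\rho$, matching the stated error estimate.

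The only delicate step is the dummy-variable relabeling used to compare $c_{ab}$ and $c_{ba}$: the asymmetric dependence of $D_1$ and $D_2$ on the two pairs of dummy variables is precisely what drives the cancellation of the even-order terms. Once this bookkeeping is in place, the rest is simply reading off the first odd term of the expansion from Proposition \ref{prop:composition_lemma} and bounding the tail with the composition estimate.
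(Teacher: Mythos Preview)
Your argument is correct and follows exactly the route the paper takes: both proofs read off the commutator directly from the composition expansion of Proposition \ref{prop:composition_lemma} and use the $(a,b)\leftrightarrow(b,a)$ symmetry (equivalently the $\alpha_1\leftrightarrow\alpha_2$ swap) to kill the even-order terms. Your write-up is in fact more explicit than the paper's one-line justification, and your check of the leading term and of the $h^{3(1-2\rho)}S^{m+n-3}_\rho$ size of the next surviving contribution is accurate.
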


\begin{proof}
Since in the asymptotic expansion \eqref{eq:asymptotic_expansion_product} the terms in odd powers of $h$ are symmetric in $a$ and $b$, this other key feature of Weyl quantization is now trivial.
\end{proof}

\begin{prop}\label{prop:optimal_L^2_regularity}
Let $a\in S^0_\rho(Z)$. Then $\Op(a)$ is bounded on $L^2$, with norm $\|a\|_{\infty} + o_{h\to 0}(1)$.
\end{prop}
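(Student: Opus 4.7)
The plan is to use the G\r{a}rding / H\"ormander square-root trick, iterated enough times to land the remainder in a symbol class for which Lemma~\ref{lemma:roughL2boundedness} already gives $L^2$-boundedness. Setting $M := \|a\|_\infty$ (one may assume $M > 0$) and fixing $\epsilon > 0$, I would first reduce the proposition to proving $\|\Op(a)u\|_{L^2} \leq (M + \epsilon + o_{h\to 0}(1))\|u\|_{L^2}$ for $u \in C^\infty_c(Z)$, then let $\epsilon \to 0$ and extend by density. The starting symbol is $b_0 := (M+\epsilon)^2 - |a|^2 \in S^0_\rho$, which is bounded below by $2M\epsilon > 0$; thanks to this uniform positive lower bound, $c_0 := \sqrt{b_0}$ is again in $S^0_\rho$, as a chain-rule induction (using that $1/c_0$ is bounded) shows that every symbolic estimate, including the $h^{-\rho}$ losses per derivative, propagates.

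The core of the argument is the inductive construction, for each integer $N \geq 0$, of a real symbol $c_N = c_0 + \widetilde c_1 + \cdots + \widetilde c_N$ with $\widetilde c_k \in h^{k(1-2\rho)} S^{-k}_\rho$, satisfying
\begin{equation*}
(M+\epsilon)^2 I - \Op(a)^*\Op(a) - \Op(c_N)^*\Op(c_N) \;=\; \Op(r_N) \;+\; \mathcal{O}(h^\infty)\Psi^{-\infty},
\end{equation*}
with $r_N \in h^{(N+1)(1-2\rho)} S^{-(N+1)}_\rho$. The base case $N=0$ follows from Proposition~\ref{prop:composition_lemma} and $\Op(1) = I$: the principal symbols of $\Op(a)^*\Op(a)$ and $\Op(c_0)^*\Op(c_0)$ are $|a|^2$ and $c_0^2$, which sum to $(M+\epsilon)^2$, with subleading contribution in $\mathcal{O}(h^{1-2\rho}\Psi^{-1}_\rho)$. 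For the inductive step, given the current \emph{real} error $e_k \in h^{k(1-2\rho)}S^{-k}_\rho$ (its reality is forced since the operator at stake is self-adjoint), I set $\widetilde c_{k+1} := -e_k/(2c_0)$, which lies in the same symbol class because $c_0$ is bounded below. The decisive identity is
\begin{equation*}
\Op(c_k)\Op(\widetilde c_{k+1}) + \Op(\widetilde c_{k+1})\Op(c_k) \;=\; 2\Op(c_k \widetilde c_{k+1}) + \mathcal{O}\bigl(h^{(k+2)(1-2\rho)}\Psi^{-k-2}_\rho\bigr),
\end{equation*}
which holds because the odd-order terms in the product expansion of Proposition~\ref{prop:composition_lemma} are antisymmetric under swapping the two arguments and cancel in the symmetric sum; combining this with $c_k = c_0 + \mathcal{O}(h^{1-2\rho}S^{-1}_\rho)$ gives $2\Op(c_k \widetilde c_{k+1}) = -\Op(e_k) + \mathcal{O}(h^{(k+1)(1-2\rho)}\Psi^{-(k+1)}_\rho)$, which is exactly the cancellation demanded by the induction.

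To conclude, I would choose $N$ large enough that both $N+1 > d+1$ and $(N+1)(1-2\rho) - \rho d - 1 > 0$, which is possible precisely because $\rho < 1/2$. Lemma~\ref{lemma:roughL2boundedness} then applies to $\Op(r_N)$ and yields $\|\Op(r_N)\|_{L^2 \to L^2} = o_{h\to 0}(1)$. Pairing the identity with $u \in C^\infty_c(Z)$ and exploiting positivity of $\Op(c_N)^*\Op(c_N)$,
\begin{equation*}
\|\Op(a)u\|^2 \;=\; (M+\epsilon)^2 \|u\|^2 - \|\Op(c_N)u\|^2 - \langle \Op(r_N)u, u\rangle \;\leq\; \bigl((M+\epsilon)^2 + o(1)\bigr)\|u\|^2.
\end{equation*}
Taking square roots and then sending $\epsilon \to 0$ finishes the proof. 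The main obstacle --- and the reason iteration is unavoidable --- is a circularity: a single square-root correction only brings the error down to $\Psi^{-1}_\rho$, whose $L^2$-boundedness is precisely what the proposition is intended to establish. The iteration breaks the loop by descending, at each step, one order deeper and one factor $h^{1-2\rho}$ smaller, until Lemma~\ref{lemma:roughL2boundedness} --- which only handles orders strictly below $-d - 1$ --- becomes applicable.
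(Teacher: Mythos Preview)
Your argument is correct and is essentially the same square-root/G\r{a}rding trick the paper uses; the only difference is organizational. The paper first iterates the \emph{squaring} $a\mapsto |a|^2$ to show, as a standalone intermediate step, that every $\Psi^{-\epsilon}_\rho$ with $\epsilon>0$ is $L^2$-bounded (with norm $\mathcal{O}(h^{-\epsilon'})$), and then performs a \emph{single} square-root step for $a\in S^0_\rho$, the error landing in $\Psi^{-1}_\rho$ which is now known to be bounded. You instead iterate the square-root correction itself until the remainder reaches order $<-d-1$ and Lemma~\ref{lemma:roughL2boundedness} applies directly. Both resolve the same circularity; the paper's route has the minor advantage of yielding the negative-order boundedness as a reusable byproduct. (A small cosmetic point: your indices for $e_k$, $\widetilde c_{k+1}$, and $r_k$ are off by one relative to each other, but the underlying induction is sound.)
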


\begin{proof}
We have all the ingredients to make the classical proof work. Consider
\begin{align*}
\| \Op_h(a) \|^2_{L^2} &= \| \Op_h(a)^\ast \Op_h(a) \|_{L^2}\\
\Op_h(a)^\ast \Op_h(a)	 &= \Op_h( |a|^2) + \mathcal{O}(hS^{n-1}_\rho) \text{ for }a\in S^n_\rho(Z)
\end{align*}
When $a$ has negative order, $|a|^2$ has a more negative order. Since $\Psi^{-d-2}$ operators are bounded with norm $O(h^{-\rho d -1})$, one can prove by induction that for any $\epsilon$, $\epsilon'$, $\Psi^{-\epsilon}$ operators are bounded on $L^2$ whenever $\epsilon>0$, with norm $\mathcal{O}(h^{-\epsilon'})$. Now, take $a\in S^0_\rho(Z)$. Let $M=\|a\|^2_\infty$. We just have to prove that $M - \Op_h(a)^\ast \Op_h(a) + o(1)$ is a positive operator. Take $\kappa>0$. Consider
\begin{equation*}
M + \kappa - \Op_h(a)^\ast \Op_h(a) = \Op_h( M + \kappa - |a|^2) + O(h^{1-\rho} S^{-1}_\rho).
\end{equation*}
But $M+ \kappa + |a|^2>\kappa$ so $b=\sqrt{M +\kappa + |a|^2}$ is in $S^0_\rho(Z)$ and real, so that $\Op(b)$ is self-adjoint, and
\begin{equation*}
M + \kappa + \Op_h(a)^\ast \Op_h(a) = \Op_h( b)^2 + \mathcal{O}(h^{1-\rho} S^{-1}_\rho) \geq -C.h^{1-\rho -\epsilon'} \text{ for any $\epsilon'$ given}.
\end{equation*}
We deduce that $M - \Op_h(a)^\ast \Op_h(a) \geq - C o(1)$.
\end{proof}

\begin{prop}\label{prop:functions_of_laplacian}
Let $f\in S(\R, n)$. With $P= -h^2 \Delta /2$, we define $f(P)$ by the spectral theorem. Recall that $p$ is the symbol of $P$. Then there is a symbol $\sigma$ such that
\begin{equation*}
\sigma = f\circ p + \mathcal{O}(h S^{n-1})
\end{equation*}
and 
\begin{equation*}
f(P) = \Op(\sigma) + R
\end{equation*}
where $R$ is \emph{asymptotically} negligible and commutes with $\partial_\theta$.
\end{prop}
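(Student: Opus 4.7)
My plan is to apply the Helffer--Sjöstrand formula, combined with a symbolic parametrix for the resolvent $(z-P)^{-1}$ built from the pseudo-differential calculus of the previous sections. Since $P$ is self-adjoint on $L^2(Z)$ and commutes with $\partial_\theta$ (the Laplacian being translation-invariant in $\theta$), pick an almost analytic extension $\tilde f\in C^\infty(\C)$ of $f$, supported in $|\Im z|\leq 1$, satisfying $|\bar\partial\tilde f(z)|\leq C_N |\Im z|^N\langle \Re z\rangle^{n-1-N}$ for every $N$. Then
\[
f(P) = -\frac{1}{\pi}\int_\C \bar\partial\tilde f(z)\,(z-P)^{-1}\, dm(z).
\]

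The main step is to construct, for $\Im z\ne 0$, a symbol $\sigma_z^{(N)} = \sum_{k=0}^{N} h^k a_{k,z}$ with $a_{0,z}=(z-p)^{-1}\in S^{-2}$ such that
\[
(z-P)\,\Op(\sigma_z^{(N)}) = I + h^{N+1}\Op(r_{z,N}).
\]
This is the usual inductive elliptic-parametrix argument: at each step the composition error produced by proposition \ref{prop:composition_lemma} lies in $h\,S^{-2-k}_\rho$, and one corrects by setting $a_{k+1,z} = (z-p)^{-1}\cdot(\text{error})$. The seminorms of each $a_{k,z}$ in $S^{-2-k}$ grow polynomially in $|\Im z|^{-1}$, but this growth will be absorbed by the $|\Im z|^{N}$ factor in $\bar\partial\tilde f$ for $N$ large. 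Because $p=y^2(Y^2+|J|^2)/2$ does not depend on $\theta$, each $a_{k,z}$ can be chosen $\theta$-independent, hence $\Op(\sigma_z^{(N)})$ commutes with $\partial_\theta$.

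Now set
\[
\sigma = -\frac{1}{\pi}\int_\C \bar\partial\tilde f(z)\,\sigma_z^{(N)}\, dm(z),
\]
which by the weight estimates above defines an element of $S^n_\rho(Z)$ independent of $\theta$. The Helffer--Sjöstrand identity $-\frac{1}{\pi}\int \bar\partial\tilde f(z)(z-p)^{-1}dm(z)=f(p)$ applied pointwise in $(x,\xi)$ shows $\sigma = f\circ p + \mathcal O(hS^{n-1})$, the $h$-correction coming from $k\geq 1$. The remainder is
\[
R = \frac{h^{N+1}}{\pi}\int_\C \bar\partial\tilde f(z)\,(z-P)^{-1}\Op(r_{z,N})\, dm(z),
\]
and we control it using $\|(z-P)^{-1}\|_{L^2\to L^2}\le|\Im z|^{-1}$ together with proposition \ref{prop:Sobolev_boundedness} applied to $\Op(r_{z,N})$, whose $H^{-s}\to H^s$ norm is polynomial in $|\Im z|^{-1}$. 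Choosing $N$ arbitrarily large makes $R$ be $\mathcal O(h^\infty)$ in every $H^{-s}\to H^s$ norm, for $h$ small enough; i.e. $R$ is asymptotically negligible. Since both $f(P)$ and $\Op(\sigma)$ commute with $\partial_\theta$, so does $R$.

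The main obstacle is the careful bookkeeping in the inductive parametrix: tracking how seminorms of $a_{k,z}$ blow up in $|\Im z|^{-1}$ and matching this decay against $|\Im z|^N$ in $\bar\partial\tilde f$, so that the Bochner integral defining $\sigma$ genuinely lives in $S^n_\rho$ and the remainder integral converges in operator norm between Sobolev spaces. A secondary issue is that gaining $H^s$-regularity on the remainder requires commuting $(z-P)^{-1}$ with $(P+1)^{s/2}$, which is harmless since these operators commute, but we must then express $(P+1)^{s/2}\Op(r_{z,N})(P+1)^{s/2}$ as a pseudor via the parametrix lemma \ref{lemma:parametrix_1} in order to control its $L^2$ norm --- this is where polynomial growth in $|\Im z|^{-1}$ reappears and gets absorbed by choosing $N$ sufficiently large.
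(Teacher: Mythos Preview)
Your proposal is correct and follows essentially the same route as the paper: a parametrix for $(z-P)^{-1}$ with symbol norms polynomially bounded in $|\Im z|^{-1}$, combined with the Helffer--Sj\"ostrand/quasi-analytic-extension formula, the $|\Im z|^N$ decay in $\bar\partial\tilde f$ absorbing the blow-up. The only cosmetic difference is that the paper first reduces positive-order $f$ to negative order by factoring out $(P+i)^{n+1}$, whereas you handle arbitrary order directly through the $\langle\Re z\rangle^{n-1-N}$ decay of the extension; also, the paper invokes lemma~\ref{lemma:parametrix_1} (which is tailored to constant-coefficient operators like $P$) rather than the full composition lemma, but this is immaterial.
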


\begin{proof}
If $f$ has positive order $n$, consider 
\begin{equation*}
f(P) = (P+i)^{n+1} \frac{ f}{(x + i)^{n+1}}(P).
\end{equation*}
Since $(P+i)^{n+1}$ is a pseudor, we only need to consider cases when $f$ has negative order. Following the method in lemma \ref{lemma:parametrix_1}, we get symbols $q_N(z)$ and $r_N(z)$ such that
\begin{equation*}
(P+z)\Op(q_N(z)) = \mathbb{1} + \Op(h^N r_N(z)).
\end{equation*}
What is more, the symbol norms of $q_N$ and $r_N$ are bounded by a power $|\Im z|^{-L_N}$ with $L_N \to \infty$ when $N \to \infty$. This establishes that $(P+z)^{-1}$ is a pseudor up to an asymptotically negligible remainder, for fixed $z$. Now, using a quasi-analytic extension of $f$ as in p.358 in \cite{MR2952218} (theorem 14.8 therein), and the bounds on $q_N$ and $r_N$, we see that the same can be said about $f(P)$.

To conclude, observe that $\Op(\sigma)$ commutes with $\partial_\theta$ whenever $\sigma$ does not depend on $\theta$, which is the case for $q_N$ and $r_N$.
\end{proof}

To prove a trace formula, it is convenient to be able to change quantizations.
\begin{lemma}\label{lemma:changing_quantization}
On $\R^{d+1}$, we can define a family of quantization as usual by
\begin{equation*}
\mathbf{Op}^t_h(\sigma)f(x) = \frac{1}{(2\pi h)^{d+1}} \int e^{i \langle x-x',\xi\rangle /h} \sigma\left(tx + (1-t)x', \xi \right)f(x')d\xi dx'
\end{equation*}
and then define $\Op^t_h(\sigma) := \mathcal{L}^\ast \mathbf{Op}^t_h(\sigma) \mathcal{L}$, for $\sigma \in S_\rho(Z)$ --- so that $\Op^{1/2} = \Op$.

If $a \in S^n_\rho$, there is a family $a_t$ of symbols so that for all $t\in [0,1]$,
\begin{equation*}
\Op^t(a_t) = \Op(a)  + \mathcal{O}(h^{\infty}\Psi^{-\infty}).
\end{equation*}
What is more
\begin{equation*}
a_t = a + \mathcal{O}(h^{1-\rho} S^{n-1}_\rho).
\end{equation*}
\end{lemma}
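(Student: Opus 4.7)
The plan is to transfer the problem to the half-plane via the unitary $\mathcal{L}$, identify $a_t$ by Fourier inversion, and recognize the resulting formula as the $T_1$-transform of a $(1,\rho)$-symbol, so that Proposition \ref{prop:Stationary_phase} delivers the result.

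Since $\Op^t = \mathcal{L}^\ast \mathbf{Op}^t \mathcal{L}$ with $\mathcal{L}$ unitary, the claim reduces to producing $a_t \in S^n_\rho(Z)$ such that $\mathbf{Op}^t_h(a_t) = \mathbf{Op}^{1/2}_h(a)$ modulo an operator that is negligible on the half-plane and remains so on $Z$ after the $\theta$-periodization \eqref{eq:summing_kernel_quotient}. Pseudo-locality (Proposition \ref{prop:support_around_y-diagonal}) then lets me truncate both kernels near the diagonal $\{y = y'\}$ at the cost of a negligible remainder, so that every kernel under consideration is supported in $\{y/y' \in [\epsilon, 1/\epsilon]\}$.

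Writing the half-plane kernels of $\mathbf{Op}^t_h(a_t)$ and $\mathbf{Op}^{1/2}_h(a)$ and changing variables to $z = tx + (1-t)x'$, $u = x - x'$ (so that $(x+x')/2 = z + (\tfrac12 - t)u$), matching the two kernels and applying Fourier inversion in $u$ forces
$$a_t(z,\xi) = \frac{1}{(2\pi h)^{d+1}} \iint e^{i\langle u, \eta\rangle/h}\, a\bigl(z + (\tfrac12 - t)u,\ \xi + \eta\bigr)\, du\, d\eta.$$
After insertion of the near-diagonal cutoff, the integrand $\sigma_t(z, \xi; z_1, \eta_1) := a\bigl(z + (\tfrac12 - t)(z_1 - z), \eta_1\bigr)$ is a $(1,\rho)$-symbol supported in some $\Omega_{1,\epsilon}$ in the sense of Definition \ref{definition:multi-variable_symbols}, and the integral above is exactly $T_1 \sigma_t$. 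Proposition \ref{prop:Stationary_phase} then yields $a_t \in S^n_\rho(Z)$ together with the asymptotic expansion
$$a_t \sim \sum_{k \geq 0} \frac{(ih(\tfrac12 - t))^k}{k!}\, (\nabla_x \cdot \nabla_\xi)^k a,$$
whose $k=0$ term is $a$; the first correction, $ih(\tfrac12 - t)\bigl(X_y X_Y + \sum_i X_{\theta_i} X_{J_i}\bigr) a$, is of the claimed size in $S^{n-1}_\rho$ after rewriting $\partial_y\partial_Y = X_y X_Y$ and $\partial_{\theta_i}\partial_{J_i} = X_{\theta_i} X_{J_i}$.

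The main obstacle, as is typical in this calculus, is the bookkeeping of the cutoff: one must check that the near-diagonal truncations, inserted both at the level of the Weyl kernel and at the level of the oscillatory integral defining $a_t$, alter the operators on $Z$ only by $\mathcal{O}(h^\infty)\Psi^{-\infty}$ remainders, both before and after the $\theta$-periodization. This rests on the same pseudo-local mechanism already exploited in the proof of the composition formula (Proposition \ref{prop:composition_lemma}); once it is carried out, the stationary phase lemma does the rest.
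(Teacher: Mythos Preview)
Your proposal is correct and follows essentially the same route as the paper: truncate near the $y$-diagonal via pseudo-locality, recover $a_t$ by Fourier inversion of the kernel, and recognize the resulting oscillatory integral as $T_1(\sigma_t)$ for the $(1,\rho)$-symbol $\sigma_t(x,\xi;x_1,\xi_1)=a\bigl((\tfrac12+t)x+(\tfrac12-t)x_1,\xi_1\bigr)\chi(\cdot)$, so that Proposition~\ref{prop:Stationary_phase} gives both $a_t\in S^n_\rho$ and the expansion $a_t=a+\mathcal{O}(h^{1-\rho}S^{n-1}_\rho)$. The only cosmetic difference is that the paper writes the cutoff explicitly as $\chi\bigl(\tfrac{y(1+t)-ty_1}{ty+y_1(1-t)}\bigr)$ and checks directly that it is supported in some $\Omega_{1,\epsilon'}$, whereas you defer this to the ``bookkeeping'' remark; that verification is indeed routine once one unwinds the support of $\eta$ under the change of variables.
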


\begin{proof}
Following the scheme of proof of the composition lemma, we truncate the kernel of $\Op^{1/2}$ around $y=y'$ with a $\eta$ compactly supported, and we want to solve
\begin{equation*}
\Op^t(a_t)_\eta = \Op(a)_\eta.
\end{equation*}
If $K^t$ is the kernel of $\mathbf{Op}^t_h(a_t)$, we have
\begin{align*}
a_t(x,\xi) 	&= \int e^{ i  \langle u, \xi \rangle/h} K^t(x+ (t-1)u,x+tu) d u. \\
\intertext{so it is legitimate to consider }
b_t(x,\xi)	&:= (2\pi h)^{-d-1} \int e^{i \langle u , \xi- \xi'\rangle/h} a\left(x + (t-1/2) u, \xi' \right) \chi \left(\frac{y+ty_u}{y+ (t-1)y_u}\right) du d\xi'\\
			&= T_1 (\sigma_t)(x,\xi,h)
\end{align*}
with 
\begin{equation*}
\sigma_t(x, \xi; x_1, \xi_1) = a\left( (1/2+ t)x + (1/2 - t)x_1, \xi_1 \right) \chi\left(\frac{y(1+t) -ty_1}{ty + y_1(1-t)} \right)
\end{equation*}
and
\begin{equation*}
\chi(x) = \eta( x - 1/x).
\end{equation*}
One can check that $\sigma_t$ is a $(1,\rho)$-symbol. We deduce then from proposition \ref{prop:Stationary_phase} that
\begin{equation*}
\Op(a) = \Op^t(b_t) + \mathcal{O}(h^\infty\Psi^{-\infty}).
\end{equation*}
and
\begin{equation*}
b_t = a + \mathcal{O}( h^{1-\rho} S_\rho^{n-1}) \in S^n_\rho.
\end{equation*}
\end{proof}

Before we turn to a trace formula, observe that when one imposes Dirichlet conditions at $y=y_0$ and considers the Laplacian on $L^2( Z, \{y>y_0\})$, one finds that it has continuous spectrum $[d^2/4, +\infty)$. We cannot expect our operators to be trace class, if they are not even compact. This is why we introduce the following. 

Let $\Pi^\ast$ is the orthogonal projection (in $L^2(Z)$) on the non-zero Fourier modes in the $\theta$ direction. Also let $\Lambda'$ be the dual lattice to $\Lambda$ and $\Lambda'^\ast= \Lambda' \setminus \{ 0\}$.
\begin{lemma}\label{lemma:HS_cusp}
Let $\epsilon>0$ and $\kappa>0$. Let $\chi \in \mathscr{C}_b^\infty(Z)$ be supported in $\{ y > \kappa \}$. When $a\in S_\rho^{-(d+1)/2 - \epsilon}$ is supported in $\{y> \kappa\}$, both $\Op^1(a) \Pi^\ast$ and $\Pi^\ast \Op^0(a)$ are Hilbert-Schmidt. As a consequence, for any $A\in \Psi_\rho^{-(d+1)/2 -\epsilon}$, both $\Pi^\ast \chi A \chi$ and $\chi A \chi \Pi^\ast $ are Hilbert-Schmidt --- this is also true if $A$ is only asymptotically negligible.
\end{lemma}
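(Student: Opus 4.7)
The plan is to compute both Hilbert–Schmidt norms directly from the Schwartz kernels, exploiting Fourier analysis in the torus variable and Plancherel in the $y$-difference. For $\Op^1(a)\Pi^*$: by \eqref{eq:summing_kernel_quotient} adapted to $t=1$, its Lebesgue kernel on $Z$ is $K(x,x')=(y/y')^{(d+1)/2}\sum_{k\in\Lambda}K^{w,1}_a(y,\theta,y',\theta'+k)$, and the kernel of $\Op^1(a)\Pi^*$ equals $K$ minus its $\theta'$-average over $\R^d/\Lambda$. Unfolding the $\Lambda$-sum inside the $\theta'$-Fourier transform (using $e^{2\pi i\langle\ell,k\rangle}=1$ for $(\ell,k)\in\Lambda'\times\Lambda$) yields, for $\ell\in\Lambda'^*$,
\[
\hat K(x,y',\ell)=(y/y')^{(d+1)/2}\,e^{-2\pi i\langle\ell,\theta\rangle}\,\tilde a(y,\theta,y-y',-2\pi h\ell),
\]
with $\tilde a(y,\theta,u,J):=(2\pi h)^{-1}\int e^{iuY/h}a(y,\theta,Y,J)\,dY$ the partial Fourier transform of $a$ in $Y$.

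Writing the Hilbert–Schmidt norm w.r.t.\ the Riemannian measure as $\int|K|^2(y'/y)^{d+1}dx\,dx'$, Parseval on $\R^d/\Lambda$ exactly cancels the $(y/y')^{d+1}$ factors, and Plancherel in $u=y-y'$ dual to $Y/h$ (after extending $y'$ from $(0,\infty)$ to $\R$) reduces the estimate to
\[
\|\Op^1(a)\Pi^*\|_{HS}^2 \le \frac{1}{2\pi h}\int_Z \sum_{\ell\ne 0}\int_\R |a(y,\theta,Y,-2\pi h\ell)|^2\,dY\,dx.
\]
Plugging in $|a|^2\le C\langle y\xi\rangle^{-(d+1)-2\epsilon}$ and substituting $U=yY$ gives $\int|a|^2dY\le Cy^{-1}(1+(yh|\ell|)^2)^{-d/2-\epsilon}$; comparing the lattice sum $\sum_{\ell\in\Lambda'^*}(1+(yh|\ell|)^2)^{-d/2-\epsilon}$ with the integral $\int_{\R^d}(1+(yh|L|)^2)^{-d/2-\epsilon}dL$ via $L=yh\ell$ bounds it uniformly by $C(yh)^{-d}$. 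The support condition $y>\kappa$ then makes $\int_\kappa^\infty y^{-d-1}dy$ convergent, yielding $\|\Op^1(a)\Pi^*\|_{HS}^2=O(\kappa^{-d}h^{-d-1})$. The case $\Pi^*\Op^0(a)$ is handled symmetrically, Fourier-transforming in $\theta$ rather than $\theta'$ (or equivalently by adjunction, since $\Op^1(a)^*=\Op^0(\bar a)$).

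For the consequence, composition (Proposition~\ref{prop:composition_lemma}) applied to $\chi\in\mathscr{C}^\infty_b\subset S_\rho^0$ on both sides of $A=\Op(a)$ produces $\chi A\chi=\Op(b)+R$, where $b\in S_\rho^{-(d+1)/2-\epsilon}$ is supported in $\{y>\kappa\}$ (each term of the composition expansion is a product of derivatives of $\chi$ and $a$ evaluated at the same point, so truncating at finite order preserves the support) and $R\in\mathcal O(h^\infty)\Psi^{-\infty}$. Lemma~\ref{lemma:changing_quantization} then rewrites $\Op(b)$ as $\Op^1(b_1)$ (for $\chi A\chi\Pi^*$) or $\Op^0(b_0)$ (for $\Pi^*\chi A\chi$) up to a further negligible error, and the first assertion applies. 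For the remaining negligible or asymptotically negligible $R$, one factors $R=(1-h^2\Delta)^{-M/2}B(1-h^2\Delta)^{-M/2}$ for $M>(d+1)/2$ with $B:L^2\to L^2$ bounded (and $O(h^\infty)$ in norm in the asymptotic case); by Proposition~\ref{prop:functions_of_laplacian} the outer factors $\chi(1-h^2\Delta)^{-M/2}$ and $(1-h^2\Delta)^{-M/2}\chi$ are pseudors of order $-M$ with symbols supported near $\{y>\kappa\}$, so the first assertion applies to them and composition with the bounded $B$ preserves the Hilbert–Schmidt property.

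The main technical obstacle is the lattice-sum/outer $y$-integration in the second step: the $y^{-d-1}$ Riemannian measure is singular at $y=0$, and the support condition $y>\kappa$ (and hence the $\chi$-cutoff in the consequence) is essential to ensure that $\int_\kappa^\infty y^{-d-1}dy$ converges.
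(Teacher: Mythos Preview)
Your direct computation of the Hilbert--Schmidt norm of $\Op^1(a)\Pi^*$ is correct and essentially identical to the paper's approach: both reduce, via Poisson/Parseval in $\theta'$ and Plancherel in $y-y'$, to the expression $\frac{1}{h}\sum_{J\in\Lambda'^*}\int_{Z\times\R}|a(y,\theta,Y,hJ)|^2\,dY\,dy\,d\theta$, and both use the symbol decay together with the support condition $y>\kappa$ to bound it.

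For the consequence, however, your argument has a gap in the treatment of the negligible remainders. After writing $\chi A\chi = \Op^1(b_1) + R$ (via composition then change of quantization, with $b_1$ supported in $\{y>\kappa'\}$ and $R\in\mathcal O(h^\infty)\Psi^{-\infty}$), you still need $R\Pi^*$ to be Hilbert--Schmidt. Your factorization $R=(P+1)^{-M/2}B(P+1)^{-M/2}$ followed by ``the outer factors $\chi(P+1)^{-M/2}$ and $(P+1)^{-M/2}\chi$ have supported symbols'' does not apply here: the cutoffs $\chi$ have already been absorbed into the composition producing $R$, and there are none left flanking it. You would need $(P+1)^{-M/2}\Pi^*$ itself to be HS, but its symbol is not supported away from $y=0$. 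Your argument \emph{does} make sense for the separate clause where $A$ itself is asymptotically negligible (then one genuinely has $\chi A\chi$ with $\chi$'s outside), but even there, reducing $\chi(P+1)^{-M/2}$ to $\Op^0$- or $\Op^1$-form via Proposition~\ref{prop:functions_of_laplacian} and Lemma~\ref{lemma:changing_quantization} generates \emph{further} negligible remainders with no surrounding cutoffs, and the argument becomes circular.

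The paper sidesteps this by exploiting the \emph{exact} identity $\chi\,\Op^1(a)=\Op^1(\chi a)$ (and dually $\Op^0(a)\,\chi=\Op^0(a\chi)$): since for $t=1$ the symbol is evaluated at $x$, left multiplication by $\chi(x)$ simply multiplies the symbol, with no remainder at all. Thus from $A=\Op^1(a_1)+\mathcal O(h^\infty\Psi^{-\infty})$ one gets $\chi A\chi\Pi^*=\Op^1(\chi a_1)\chi\Pi^*+\chi\,\mathcal O(h^\infty\Psi^{-\infty})\,\chi\Pi^*$ directly, with $\chi a_1$ supported; and for the negligible piece it uses an \emph{exact} parametrix $(P+1)^{-N}=(1+\Op^1(h^M r))^{-1}\Op^1(q)$ (valid for small $h$, by Neumann series), so that the localized resolvent reduces to the first assertion times bounded factors, without introducing new unlocalized remainders.
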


\begin{proof}
The Hilbert-Schmidt (HS) norm (on $L^2(Z)$) of an operator $A$ with kernel $K$ w.r.t to the Lebesgue measure on the cylinder is
\begin{equation*}
\| A \|_{HS}^2 = \int_{Z\times Z} | K(x,x')|^2 \left(\frac{y'}{y}\right)^{d+1}  d y  d \theta  d y'  d \theta'.
\end{equation*}
Recall the Poisson formula (the covolume of $\Lambda$ is $1$)
\begin{equation*}
\sum_{\varpi\in \Lambda} e^{ i  \langle \varpi, W\rangle} = \sum_{W_i \in \Lambda'} \delta_{W_i}(W).
\end{equation*}
Using \eqref{eq:summing_kernel_quotient}, we deduce that the kernel of $\Op^1(a)$ is
\begin{equation*}
K^1_a(x,x') = (2 \pi)^{-d-1} \left(\frac{y}{y'}\right)^{\frac{d+1}{2}}\sum_{J \in \Lambda'}\int  e^{ i  \langle x-x',\xi \rangle} a\left( x, h \xi\right)d Y , \text{ where } \xi = (Y, J).
\end{equation*}
Seing this as a Fourier transform in the $x'$ variabla, by Parseval :
\begin{equation*}
\|\Op_1(a_1)\Pi^\ast \|^2_{HS} = \frac{1}{(2\pi)^d}\sum_{J\in \Lambda'^\ast} \int_{Z \times \R}   |a_1(y,\theta, hY, hJ)|^2  d y  d \theta  d Y
\end{equation*}
Since $a\in S^{-(d+1)/2 - \epsilon}_\rho$ is supported in $\{ y \geq \kappa\}$, this is less than
\begin{equation*}
C \sum_{J \in \Lambda'^\ast} \int_0^\infty \int_\R \langle h^2 y^2 (Y^2 + J^2)\rangle^{-d-1-2\epsilon} dY dy \leq C_N\sum_{J\in\Lambda'^\ast} \int_{\kappa}^\infty d y \frac{1}{hy (1+(hy|J|)^2)^{d/2+\epsilon}}.
\end{equation*}
After some change of variables, this is seen to be finite for fixed $h$. Observe that if we did not remove $J=0$, it would not be the case; there would be a logarithmic divergence. Hence $\Op^1(a) \Pi^\ast$ is HS. Taking adjoints, we see that that $\Pi^\ast \Op^0(\overline{a})$ is also HS.

Then, we write for some $N>0$ big enough. 
\begin{equation*}
(P+1)^{-(d+1)/4 - \epsilon}  = (1+\Op^1(h^N r^1_N))^{-1} \Op^1(q^1_N) = \Op^0(q^0_N)(1+\Op^0(h^N r^0_N))^{-1} 
\end{equation*}
We deduce that both
\begin{equation*}
\Pi^\ast \chi (P+1)^{-(d+1)/4 - \epsilon} \quad  \Pi^\ast (P+1)^{-(d+1)/4 - \epsilon}  \chi
\end{equation*}
are HS.

Take $R$ asymptotically negligible, supported in $\{ y> \kappa\}$. Then for any $N$, $(P+1)^N R$ is bounded on $L^2$, so that we can write $\Pi^\ast R = \Pi^\ast \tilde{\chi} (P+1)^{-N} \tilde{R}$ where $\tilde{\chi}$ is supported in some $\{y> \kappa'>0\}$, equal to $1$ on the support of $R$, and $\tilde{R}$ is a bounded operator for $h$ small enough. We deduce that $\Pi^\ast R$ is HS for $h$ small enough, and similarly for $R \Pi^\ast$.

Now, if $A\in \Psi^{-(d+1)/2 - \epsilon}_\rho$, it writes as $A= \Op^1(a) + \mathcal{O}(h^{\infty}\Psi^{-\infty})$, and
\begin{equation*}
\chi A \chi \Pi^\ast = \Op^1(\chi a)\chi \Pi^\ast + \chi \mathcal{O}(h^{\infty}\Psi^{-\infty})\chi \Pi^\ast
\end{equation*}
so that $\chi A \chi \Pi^\ast$ is HS.
\end{proof}

\begin{prop}\label{prop:Trace_cusp}
Take any $\epsilon>0$. Let $A\in \Psi^{-d-1-\epsilon}_\rho$ with principal symbol $a$. Then $\chi A\chi\Pi^\ast$ is trace class, and 
\begin{equation*}
Tr\left[ \chi A \chi \Pi^\ast \right] = \frac{1}{h^{d+1}}\left[\int_{T^\ast Z} \chi^2(x) a(x,h\xi) + \mathcal{O}(h + h^d|\log h|) \right].
\end{equation*}
If $A = \Op^1(a)$, then the remainder is improved to $\mathcal{O}(h^2 + h^d |\log h|)$.
\end{prop}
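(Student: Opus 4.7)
\emph{The plan is in two stages.} First, I would establish that $\chi A \chi \Pi^\ast$ is trace class by factoring it as a product of two Hilbert--Schmidt operators. Write $A = A_1 A_2$ with $A_2 = (P+1)^{-(d+1)/4-\epsilon/2}$ and $A_1 = A(P+1)^{(d+1)/4+\epsilon/2}$, both lying in $\Psi^{-(d+1)/2-\epsilon/2}_\rho$ by Lemma~\ref{lemma:parametrix_1} and Proposition~\ref{prop:functions_of_laplacian}. Pick an intermediate cutoff $\tilde\chi \in \mathscr{C}^\infty_b(Z)$ supported in $\{y > \kappa'\}$ for some $0 < \kappa' < \kappa$, equal to $1$ on $\supp \chi$. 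Pseudolocality (Proposition~\ref{prop:support_around_y-diagonal}) yields
\begin{equation*}
\chi A \chi \Pi^\ast = (\chi A_1 \tilde\chi)(\tilde\chi A_2 \chi \Pi^\ast) + (\text{negligible})\cdot\Pi^\ast.
\end{equation*}
Since $A_2 = f(P)$ commutes with $\Pi^\ast$ (Proposition~\ref{prop:functions_of_laplacian}), I can further insert a $\Pi^\ast$ between the two factors modulo negligible commutators with the cutoffs, so that both factors carry a $\Pi^\ast$ on one side and are Hilbert--Schmidt by Lemma~\ref{lemma:HS_cusp} (the left one after passing to the adjoint). A product of two HS operators is trace class.

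For the asymptotic expansion, I would first apply Lemma~\ref{lemma:changing_quantization} to replace $A$ by $\Op^1(a')$ with $a' - a \in \mathcal{O}(h^{1-\rho})S^{-d-2}_\rho$; tracing this correction against $\chi^2\Pi^\ast$ contributes $\mathcal{O}(h)$ to the final remainder, which accounts for the $\mathcal{O}(h)$ in the statement and is absent when $A = \Op^1(a)$ already (explaining the improved $\mathcal{O}(h^2)$). The diagonal kernel formula derived in the proof of Lemma~\ref{lemma:HS_cusp} gives
\begin{equation*}
K^1(x,x) = \frac{1}{(2\pi)^{d+1}}\sum_{J\in \Lambda'} \int_\R a(x, hY, hJ)\, dY,
\end{equation*}
from which $\Pi^\ast$ removes the $J=0$ contribution. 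Setting $g_x(\eta) := \int a(x, hY, \eta)\, dY$ and applying Poisson summation on the lattice $h\Lambda'$,
\begin{equation*}
\sum_{J\in\Lambda'^\ast} g_x(hJ) = h^{-d}\int_{\R^d} g_x(\eta)\, d\eta - g_x(0) + h^{-d}\sum_{\varpi \in \Lambda\setminus\{0\}}\hat g_x(\varpi/h).
\end{equation*}
The first term, after rescaling $Y$, yields the main term $h^{-(d+1)}\int_{T^\ast Z} \chi^2 a(x, h\xi)$ of the statement. The Poisson remainder is $\mathcal{O}(h^\infty)$ via repeated integrations by parts in $\eta$: each derivative costs $h^{-\rho}$ by the exotic symbol estimates but is compensated by $h/|\varpi|$ from the lattice, for a net gain of $h^{1-\rho}$ per step.

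The main obstacle will be the middle $-g_x(0)$ term, which produces the $\mathcal{O}(h^d|\log h|)$ remainder. Naively, $|g_x(0)| \lesssim 1/(hy)$ would make $\int_Z \chi^2\, g_x(0)\, dx$ logarithmically divergent as $y \to \infty$; however, the Poisson approximation is only effective in the regime $hy \lesssim 1$, while for $y \gtrsim 1/h$ one has to estimate $\sum_{J \neq 0} g_x(hJ)$ directly, using the symbol decay to obtain the integrable bound $\lesssim (hy)^{-d-1-\epsilon}$. Carefully matching the two regimes at the transition scale $y \sim 1/h$ cuts off the logarithmic divergence at $|\log h|$, yielding the remainder $\mathcal{O}(h^d|\log h|)$ once normalized against the main-term scale $h^{-(d+1)}$.
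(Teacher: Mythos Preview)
Your trace--class argument and the reduction to $\Op^1$ via Lemma~\ref{lemma:changing_quantization} are essentially the paper's argument (up to a minor bookkeeping issue in the exponents: with $A_2=(P+1)^{-(d+1)/4-\epsilon/2}$ your $A_1$ lands exactly at order $-(d+1)/2$, not strictly below; use $\epsilon/4$ instead).

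The genuine gap is in the asymptotic step. Your claim that the Poisson tail $h^{-d}\sum_{\varpi\neq 0}\hat g_x(\varpi/h)$ is $\mathcal{O}(h^\infty)$ is not correct as written, and the reason is precisely the geometry of the symbol class. Each integration by parts in $\eta=J$ produces $\partial_{J}a = y\, X_{J}a$, so the cost per step is $y\,h^{-\rho}$, not just $h^{-\rho}$. After $N$ steps and the rescalings you use, one finds
\[
h^{-d}\sum_{\varpi\neq 0}|\hat g_x(\varpi/h)|\ \lesssim\ h^{-d}\,(hy)^{N-1}\,h^{-\rho N}\,y^{-d},
\]
and integrating this in $y$ over your region $\kappa<y<1/h$ gives, for any $N>d$, only $\mathcal{O}(h^{-1-\rho N})$ --- which \emph{worsens} with $N$ when $\rho>0$ and is merely $\mathcal{O}(h^{-1})$ (i.e.\ $\mathcal{O}(h^{d})$ after normalizing by $h^{-(d+1)}$) when $\rho=0$. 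So the ``net gain of $h^{1-\rho}$ per step'' is illusory: the hidden factor of $y$, which ranges up to $1/h$, eats the gain. In particular your scheme does not produce the $\mathcal{O}(h^2)$ term at all; your explanation that ``$\mathcal{O}(h)\to\mathcal{O}(h^2)$ comes solely from avoiding the quantization change'' is therefore incomplete.

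The paper sidesteps this by \emph{not} going to all orders. It compares the lattice sum to the integral by a second--order Taylor expansion on each cell of $h\Lambda'$ (this is Lemma~\ref{lemma:riemann_sums_for_symbols}), so only two $J$--derivatives of the symbol enter. The resulting factor $y^{2}$ is exactly compensated by the extra decay $\langle y\xi\rangle^{-2}$, and the $y$--integral converges to give the $\mathcal{O}(h^{2})$ error; the $\mathcal{O}(h^{d}|\log h|)$ comes, as you correctly identified, from the cell containing $J=0$. If you want to keep the Poisson route, you must either restrict Poisson to a bounded $y$--region (where the tail really is $\mathcal{O}(h^\infty)$) and handle the rest by a direct Riemann--sum argument, or accept the weaker control above on the dual sum --- which, incidentally, still closes for $\rho=0$ but with error $\mathcal{O}(h^{d})$ rather than $\mathcal{O}(h^\infty)$.
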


Observe that in the case of surfaces, the remainder is not as good as for compact manifolds.

\begin{proof}
First, if $R\in \mathcal{O}(h^\infty)\Psi^{-\infty}$ is supported in $\{y> \kappa\}$, then $R\Pi^\ast$ and $\Pi^\ast R$ are trace class since, for example, we can write 
\begin{equation*}
\Pi^\ast R = \Pi^\ast \chi(P+1)^{-N} \Pi^\ast (P+1)^{N} R
\end{equation*}
and this is the product of two HS operators.

Now, if $A\in \Psi^{-d-1-\epsilon}_\rho$, we can write 
\begin{equation*}
A = \Op^1(\tilde{a}) + \mathcal{O}(\Psi^{-\infty})
\end{equation*}
with $\tilde{a} = a + \mathcal{O}(h S^{-d-2-\epsilon}_\rho)$. Observe
\begin{equation*}
\chi \Op^1(\tilde{a})\chi\Pi^\ast = \left[\chi\Op^1(\tilde{a})\sqrt{\chi}(P+1)^{(d+1)/2 + \epsilon/2}\Pi^\ast\right]\left[ (P+1)^{-(d+1)/2- \epsilon/2}\Pi^\ast \sqrt{\chi}\right]
\end{equation*}
In the RHS, we have shown that the second term of the product is HS. Using proposition \ref{prop:functions_of_laplacian}, we can write the first term as $\Op^11(b) + R$ where $b \in S^{-(d+1)/2 -\epsilon/2}_\rho$ and $R$ is asymptotically negligible. It is thus also HS, and the product is trace class.

Writing the trace as the integral of the kernel along the diagonal, we obtain
\begin{equation*}
Tr \chi A \chi \Pi^\ast = \frac{1}{(2\pi)^{d+1}}\sum_{J\in \Lambda'^\ast}\int_{Z \times \R} \chi \tilde{a}(x,h\xi)dx dY.
\end{equation*}

To end the proof, we use:
\begin{lemma}\label{lemma:riemann_sums_for_symbols}
If $a\in S^{-d-1-\epsilon}_\rho$ is supported in some $\{ y>\kappa >0\}$, 
\begin{equation*}
\sum_{J\in \Lambda'^\ast}\int_{Z \times \R} a(x,h\xi)dx dY = \frac{1}{h^{d+1}} \left[\int_{T^\ast Z } a + \mathcal{O}(h^2 + h^d |\log h|)\right].
\end{equation*}
and both sums converge absolutely.
\end{lemma}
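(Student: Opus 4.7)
The plan is to reduce the identity to a Riemann-sum approximation and apply Poisson summation. First, absolute convergence of both sides follows from $|a(x,\xi)|\lesssim\langle y\xi\rangle^{-d-1-\epsilon}$ together with $y\geq\kappa>0$, which also justifies interchanging the sum and the integral. After the substitution $Y'=hY$, introduce
$$G(J'):=\int_\kappa^\infty\int_{\T^d}\int_\R a(y,\theta,Y',J')\,dy\,d\theta\,dY',$$
so that the claim becomes the Riemann-sum estimate
$$h^d\sum_{J\in\Lambda'^\ast}G(hJ) - \int_{\R^d}G(J')\,dJ' = O(h^2 + h^d|\log h|).$$
Integrating the symbol estimates $|\partial_{J'}^k a|\lesssim h^{-k\rho}y^k\langle y\xi\rangle^{-d-1-\epsilon-k}$ in the $(y,Y')$ variables, one checks that $G$ is smooth on $\R^d\setminus\{0\}$, with $|G(J')|\lesssim|\log|J'||$ near $0$, $|G(J')|\lesssim|J'|^{-d-\epsilon}$ at infinity, and $|\partial_{J'}^k G(J')|\lesssim h^{-k\rho}|J'|^{-k}$ for small $|J'|$.

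I split at the natural scale $h$: with $\chi$ a smooth radial cutoff equal to $1$ on $\{|J'|\leq 1\}$ and supported in $\{|J'|\leq 2\}$, write $G=G_0+G_\infty$ where $G_0:=G\cdot\chi(\cdot/h)$. The near-singularity piece $G_0$ is directly of order $h^d|\log h|$ on both sides: the integral $\int G_0\,dJ'\lesssim\int_{|J'|\leq 2h}|\log|J'||\,dJ'$ is $O(h^d|\log h|)$, and only $O(1)$ nonzero lattice points satisfy $|hJ|\leq 2h$ for $h$ small, each contributing $O(h^d|\log h|)$. For the bulk part $G_\infty$, which vanishes at the origin so that summation over $\Lambda'^\ast$ and $\Lambda'$ coincide, Poisson summation yields
$$h^d\sum_{J\in\Lambda'}G_\infty(hJ) - \int G_\infty = \sum_{\lambda\in\Lambda\setminus\{0\}}\widehat{G_\infty}(\lambda/h),$$
and each Fourier coefficient is estimated via iterated integration by parts, $|\widehat{G_\infty}(\lambda/h)|\leq(h/|\lambda|)^{2m}\|\Delta^m G_\infty\|_{L^1}$ for integer $m>d/2$.

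The main technical challenge is sharply bounding $\|\Delta^m G_\infty\|_{L^1}$. Expanding $\Delta^m[G(1-\chi(\cdot/h))]$ via Leibniz produces interior contributions from $\Delta^m G\cdot(1-\chi(\cdot/h))$, which are controlled on $\{|J'|\geq h\}$ by the pointwise derivative bounds on $G$, and boundary contributions from derivatives of $\chi(\cdot/h)$ concentrated on $\{|J'|\sim h\}$, which scale as $h^{-j}$ for $j$ derivatives applied to the cutoff. A careful accounting gives $\|\Delta^m G_\infty\|_{L^1}\lesssim h^{d-2m(1+\rho)}$ for $2m>d$, so that the Poisson sum is bounded by $h^{d-2m\rho}\sum_{\lambda\neq 0}|\lambda|^{-2m}$. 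Choosing $m$ to balance the Fourier-convergence requirement $2m>d$ against the exotic-symbol loss $h^{-2m\rho}$, and exploiting $\rho<1/2$, delivers the desired $O(h^2)$ bound on the bulk contribution and combines with the $O(h^d|\log h|)$ coming from $G_0$ to complete the estimate.
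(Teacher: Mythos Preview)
Your approach via Poisson summation is genuinely different from the paper's. The paper uses a much more elementary device: for each $\varpi\in\Lambda'^\ast$ the second-order (midpoint) Taylor estimate
\[
\Bigl|f(h\varpi)-\frac{1}{h^d}\int_{h\varpi+hD'}f(J)\,dJ\Bigr|\le \frac{Ch^2}{h^d}\int_{h\varpi+hD'}\|d^2_J f\|,
\]
with $D'$ a symmetric fundamental domain for $\Lambda'$, immediately splits the difference into the missing cell $\{J\in hD'\}$ (yielding the $h^d|\log h|$ piece) and the Hessian integral over $\{J\notin hD'\}$ (yielding the $h^2$ piece). This uses only \emph{two} $J$-derivatives of $a$ and no Fourier analysis at all. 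Your Poisson route arrives at the same splitting scale $|J'|\sim h$ but is heavier machinery: you must control $\|\Delta^m G_\infty\|_{L^1}$ with $2m>d$, hence roughly $d$ derivatives instead of two.

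There is a real gap in your final paragraph. The claim that ``choosing $m$ to balance \ldots\ delivers the desired $O(h^2)$'' fails once $\rho>0$: your own computation gives the Poisson tail as $O(h^{d-2m\rho})$ with the constraint $2m>d$ forced by convergence of $\sum_{\lambda\ne 0}|\lambda|^{-2m}$. For $d=1$ the best you can do is $m=1$, giving $h^{1-2\rho}$, which is neither $O(h^2)$ nor $O(h|\log h|)$ when $\rho$ is close to $1/2$; for $d=2$ you get $h^{2-4\rho}$, again worse than stated. Invoking ``$\rho<1/2$'' alone cannot close this. By contrast the paper's midpoint argument touches only two $J$-derivatives, so its exotic loss is $h^{-2\rho}$ and the Hessian term is $O(h^{2-2\rho})$ (a loss the paper does not display either, but which is uniformly $o(h)$ for $\rho<1/2$ and hence harmless in the downstream trace formula). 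If you wish to keep the Poisson route, either restrict the statement to $\rho=0$, or replace the $\Delta^m$ integration by parts by a two-derivative estimate on $G_\infty$ so that your $\rho$-loss matches the paper's.
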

\end{proof}

The proof of lemma \ref{lemma:riemann_sums_for_symbols}:
\begin{proof}
Let $D'$ be fundamental domain for the action of $\Lambda'$ on $\R^d$. Assume $D'$ to be symmetric around $0$, and of bounded diameter. Its volume is $1$. Then, for $\varpi\in \Lambda'^\ast$, 
\begin{equation*}
\left|f(h\varpi)  - \frac{1}{h^d}\int_{h\varpi + h D'} f(J)d J\right| \leq C \frac{h^2}{h^d} \int_{h\varpi + hD'} \|  d ^2_J f(J) \|.
\end{equation*}
hence the difference between the two main terms in lemma \ref{lemma:riemann_sums_for_symbols} is bounded up to some constant by
\begin{equation*}
h^{-d-1}\int_{(x,\xi)\in T^\ast Z, J\in hD'} |a(x,\xi)| + h^{1-d}\int_{(x,\xi)\in T^\ast Z, J\notin hD'}  \| d ^2_J a\|.
\end{equation*}

For the first term, first integrate variable $\theta$ (losing a constant $\mathrm{vol}(D)$) and then variable $Y$ after rescaling. Obtain a bound by
\begin{equation*}
h^{-d-1} \int_{J \in hD', y>\kappa} \frac{1}{y} (1+y^2J^2)^{\frac{-d-\epsilon}{2}}
\end{equation*}
Rescaling the $y$ variable, this is bounded by (note the use of polar coordinates in $J$),
\begin{equation*}
h^{-d-1} \int_0^{Ch} r^{d-1}  d r \int_{\kappa r}^{+\infty} \frac{ d y}{y(1+y^2)^{\frac{d+\epsilon}{2}}}.
\end{equation*}
This is $\mathcal{O}(h^{-1}|\log h|)$. Likewise for the second term, it is bounded by :
\begin{equation*}
h^{1-d} \int_{Ch}^\infty r^{d-1}  d r \frac{1}{r^2}\int_{\kappa r}^{+\infty}\frac{y d y}{(1+y^2)^{\frac{d+\epsilon}{2}}}
\end{equation*}
This is $\mathcal{O}(h^{1-d})$ if $d>2$ and respectively when $d=1$, $\mathcal{O}(h^{-1})$, and when $d=2$, $\mathcal{O}(h^{-1} |\log h|)$.
\end{proof}

\section{Applications}

Now we will present some applications of the cusp-quantization.
\subsection{Cusp manifolds}

\subsubsection{Quantization}
As we said in the introduction, cusp manifolds are described as a compact manifold with boundary to which is glued a finite number of cusps. Here, we give a formal definition that will simplify the construction of the quantization :
\begin{definition}
Let $(M, g)$ be a complete $(d+1)$-dimensional riemannian manifold. $M$ is said to be a cusp manifold if it is endowed with a cusp atlas $\mathcal{F}$, that is 
\begin{itemize}
	\item a finite collection $(U_i, U'_i, \gamma_i)_i$ of $\R^{d+1}$-charts, that is, diffeomorphisms $\gamma_i : U_i\subset M \to U'_i\subset \R^n$, with $U_i$ relatively compact.
	\item a finite collection $ (Z_j, Z'_j, \gamma^c_j)_j $ of cusp-charts, that is, diffeomorphisms $\gamma^c_j : Z_j\subset M \to Z'_j \subset Z_{\Lambda_j}$ such that $\gamma^c_j$ is an isometry, and $Z'_j$ is of the form $\{ y> a_j\}$.
\end{itemize}
We require that
\begin{itemize}
	\item No two $X_j$'s intersect.
	\item The coordinate changes between two $(U'_i)$'s or $Z_j$ and $U_i$ are diffeomorphisms.
	\item The lattices $\Lambda_i$ have covolume $1$. This is a convention, and there is only one choice of height function $y_i$ that is coherent with that choice.
\end{itemize}
\end{definition}

\begin{definition}\label{def:symbols_manifold}
In $\R^{d+1}$, we define the \emph{Kohn-Nirenberg} symbols of order $n$, in the usual way, as in \cite[p.207]{MR2952218}: $\sigma \in S^n_\rho(\R^{d+1})$ whenever for all $k,k'\geq 0$ there is a constant $C_{k,k'}>0$, 
\begin{equation*}
|d^{k'}_{x} d^k_{\xi} \sigma| \leq C_{k,k'} \langle \xi \rangle^{n-k}, \text{ for } x,\xi \in \R^{d+1}
\end{equation*}

The class $S^n_\rho(M)$ of hyperbolic symbols of order $(n,\rho)$ is composed of the functions $\sigma$ on $T^\ast M$ such that for any chart $(U,V\subset N, \gamma)$ in the atlas, the function 
\begin{equation*}
\sigma^U(x,\xi) : = \gamma^\ast \sigma \left[=\sigma(\gamma^{-1}(x),  d \gamma(\gamma^{-1}(x))^\ast \xi) \right]
\end{equation*}
is the restriction to $T^\ast V$ of some element of $S^n_\rho (N)$ (with $N= \R^{d+1}$ or $N= Z_{\Lambda_i}$). The invariance by coordinate changes of the Kohn-Nirenberg class \cite[theorem 9.4, p.207]{MR2952218} implies that this is well defined --- it does not depend on the choice of the atlas.
\end{definition}

To define a quantization on cusp manifolds that enjoys all usual properties, we follow the procedure in p. 347 through to p. 352 in \cite{MR2952218}. A pseudo-differential operator on $M$ is defined as an operator $C^\infty_c(M) \to C^\infty(M)$ such that restricted to any chart, it is pseudo-differential --- in the case of a cusp-chart, this means that it is in some $\Psi_\rho(Z_{\Lambda_i})$. We also require that they are pseudo-local --- that is, when we truncate their kernel at a fixed distance of the diagonal, we obtain negligible operators. 

Lemma \ref{lemma:Pseudo-differential_behavior} proves that it suffices to check the above properties for the finite set of charts of some cusp-atlas. Lemmas \ref{lemma:Pseudo-differential_behavior} and \ref{prop:support_around_y-diagonal} ensure that the class of pseudors is not reduced to compactly supported operators, because pull backs of elements of $\Psi(Z_{\Lambda_i})$ \emph{are} pseudo-local. 

We can define the semi-classical principal symbol $\sigma^0(A)$ of a pseudor $A$ as for pseudors on compact manifolds --- once again thanks to lemma \ref{lemma:Pseudo-differential_behavior} --- and according to the definition, $\sigma^0(A)$ is in some $S^n_\rho(M)$. The class of $A$ such that $\sigma^0(A)\in S^n_\rho(M)$ is denoted $\Psi^n_\rho$. We let $\Psi_\rho^{-\infty}(M)$ be the class of smoothing operators in the Sobolev sense --- as in definition \ref{def:smoothing_cusp}. Let $\Psi_\rho(M) = \cup_{n\geq -\infty} \Psi^n_\rho$. When we omit the $\rho$, we refer to the case $\rho = 0$.

Using charts, our quantization $\Op$ in $Z_{\Lambda_i}$ and the usual Weyl quantization on $\R^{d+1}$, we are able to build a quantization procedure $\Op$ on $M$, that is a section to the symbol map. Using classical results, and the first part of the article, we see that $\Psi^0_\rho$ gives bounded operators on $L^2(M)$, whose norm is the $L^\infty$ norm of the symbol, up to a $o(1)$ term as $h\to 0$ --- that could be estimated with derivatives of the symbol.

For a height $a$ bigger than all the $a_j$, we define $\Pi^\ast_a$ as the projection on non zero Fourier modes in $\{y>a\}$ :
\begin{equation*}
\Pi^\ast_a f := f - \mathbb{1}(y>a) \int f d \theta .
\end{equation*}
The following hold :
\begin{prop}
Let $A\in \Psi^{-1}_\rho(M)$. Then $\Pi^\ast_a A$ is compact on $L^2$. 

Let $A\in \Psi^{-n}_\rho(M)$ with $2n > d+1$. Then $\Pi^\ast A \Pi^\ast$ is Hilbert-Schmidt. 

Let $A\in \Psi^{-n}_\rho(M)$ with $n > d+1$. Then $\Pi^\ast A \Pi^\ast$ is trace class, and
\begin{equation*}
Tr \Pi^\ast_a A \Pi^\ast_a = \frac{1}{h^{d+1}} \left[\int_{T^\ast M} \sigma(A) + \mathcal{O}(h + h^d\log h) \right]
\end{equation*}
\end{prop}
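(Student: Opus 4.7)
My plan is to reduce each assertion to the corresponding local result in a single cusp (lemma \ref{lemma:HS_cusp}, proposition \ref{prop:Trace_cusp}) combined with standard pseudo-differential theory on a compact manifold. I fix a $\theta$-invariant partition of unity $1 = \chi_c + \sum_{i=1}^m \chi_i$ on $M$, with $\chi_c$ supported in a relatively compact neighborhood of the core containing $\{y \leq 2R\}$ in each cusp (for some $R > a$), and each $\chi_i$ supported in $Z_i \cap \{y \geq R\}$ and depending only on $y$ in the cusp coordinates. Because the cutoffs are $\theta$-independent in the cusps and the $\chi_i$'s live above height $a$, each of them commutes with $\Pi^\ast_a$. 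Inserting the partition on both sides of $A$, I expand the operator into a finite sum of diagonal terms $\Pi^\ast_a \chi_\alpha A \chi_\alpha \Pi^\ast_a$ and cross terms $\Pi^\ast_a \chi_\alpha A \chi_\beta \Pi^\ast_a$ with $\alpha \neq \beta$, and treat each separately.

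For cross terms with two distinct cusp indices, the supports of $\chi_\alpha$ and $\chi_\beta$ lie at positive distance in disjoint cusps, so the pseudo-locality requirement of our class (proposition \ref{prop:support_around_y-diagonal} applied in each chart) makes the operator negligible, hence compact, Hilbert--Schmidt, and trace class with vanishing contribution to the trace. For cross terms with one index equal to $c$, a slightly enlarged cutoff $\tilde\chi_c$ equal to $1$ on $\supp(\chi_c)$ lets me write $\chi_c A \chi_\beta = \chi_c \tilde\chi_c A \chi_\beta + \mathcal{O}(h^{\infty})\Psi^{-\infty}$, whose kernel is compactly supported in at least one coordinate; Rellich--Kondrachov then gives compactness, HS of the right order, and trace class of the right order. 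The diagonal cusp pieces $\Pi^\ast_a \chi_i A \chi_i \Pi^\ast_a$ pull back under the cusp chart to operators falling directly under lemma \ref{lemma:HS_cusp} (for (2)) and proposition \ref{prop:Trace_cusp} (for (3)), giving the HS property and the contribution $h^{-d-1}\bigl[\int_{T^\ast Z_i}\chi_i^2 \sigma(A) + \mathcal{O}(h + h^d\log h)\bigr]$ to the trace. The diagonal core piece $\chi_c A \chi_c$ is handled by classical compact-manifold pseudo-differential theory, contributing $h^{-d-1}\bigl[\int_{T^\ast M}\chi_c^2 \sigma(A) + \mathcal{O}(h)\bigr]$. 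Cross terms contribute $2\chi_\alpha \chi_\beta \sigma(A)$ at the principal level; summing with the diagonals via $(\chi_c + \sum_i \chi_i)^2 = 1$ (and noting $\chi_i \chi_j = 0$ for $i\neq j$) reassembles $\int_{T^\ast M}\sigma(A)$ with the announced remainder.

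The main obstacle is assertion (1): $A$ has order only $-1$, strictly above the threshold $-(d+1)/2$ of lemma \ref{lemma:HS_cusp}, so the diagonal cusp term $\Pi^\ast_a \chi_i A \chi_i$ requires a spectral argument rather than a kernel estimate. In $Z_i$ the Laplacian preserves the Fourier decomposition in $\theta$, and on each nonzero mode $k' \in \Lambda'^\ast$ it restricts to a one-dimensional operator on $L^2(\R^+, dy/y^{d+1})$ whose confining term $\sim h^2 y^2 |k'|^2$ forces purely discrete spectrum bounded below by $c h^2 R^2 |k'|^2$ on $\supp(\chi_i)$. Using the parametrix $A = (1+P)^{-1/2} B + R$ from lemma \ref{lemma:parametrix_1} with $B \in \Psi^0$ bounded and $R$ smoothing, the operator $\chi_i (1+P)^{-1/2} \Pi^\ast_a$ decomposes as a direct sum over $k' \neq 0$ of compact one-dimensional resolvents whose norms decay like $|k'|^{-1}$; the direct sum of compact operators with vanishing norms is compact. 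Composing with the bounded $B$, and combining with the compactness of the cross and core terms established in the previous paragraph, yields compactness of $\Pi^\ast_a A$ and finishes (1).
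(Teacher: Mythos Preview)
Your strategy for assertions (2) and (3) --- localize by a $\theta$-invariant partition of unity, handle cusp diagonal terms via lemma \ref{lemma:HS_cusp} and proposition \ref{prop:Trace_cusp}, core terms classically, and cross terms by pseudo-locality --- is exactly the paper's approach, and your reassembly of the trace via $(\chi_c+\sum\chi_i)^2=1$ is fine.

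For assertion (1) your argument has a real gap, and the paper takes a different and cleaner route. Two issues: first, lemma \ref{lemma:parametrix_1} only treats composition with \emph{differential} operators, so it does not produce the factorization $A=(1+P)^{-1/2}B+R$; you would need proposition \ref{prop:functions_of_laplacian} for that. Second, and more seriously, the operator $(1+P)^{-1/2}$ is built from the Laplacian on \emph{all of $M$}, not on the cusp $Z_i$, so it does \emph{not} decompose as a direct sum over Fourier modes $k'\in\Lambda'^\ast$; the compact core mixes the modes. Your sentence ``$\chi_i(1+P)^{-1/2}\Pi^\ast_a$ decomposes as a direct sum over $k'\neq 0$ of compact one-dimensional resolvents'' is therefore false as written, and replacing $P$ by the cusp Laplacian with boundary conditions would require a separate comparison argument.

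The paper instead reduces (1) to the single fact that the inclusion $\Pi^\ast_a H^1(M)\hookrightarrow L^2(M)$ is compact: since $A\in\Psi^{-1}_\rho$ is bounded $L^2\to H^1$, compactness of $\Pi^\ast_a A$ follows immediately. To prove the compact inclusion, the paper notes that $f\mapsto \mathbb{1}_{a<y<T}\Pi^\ast f$ is compact for each $T$ (Rellich on a compact piece), and then uses the Wirtinger inequality on the torus to get
\[
\|\mathbb{1}_{y>T}f\|_{L^2(Z_\Lambda)} \;\leq\; \frac{C}{T^{d+1}}\,\|\mathbb{1}_{y>T}\nabla f\|_{L^2(Z_\Lambda)}
\]
for $f$ with vanishing zero mode; thus $\mathbb{1}_{y>a}\Pi^\ast:H^1\to L^2$ is a norm-limit of compact operators. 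This Wirtinger bound is precisely the quantitative form of your ``confining term $h^2 y^2|k'|^2$'' intuition, but invoked at the level of Sobolev spaces rather than through a spectral decomposition of a global resolvent.
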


\begin{proof}
In the compact part of $M$, these are classical results --- see theorem 4.28 p. 89, and remark (C.3.6) p. 412 in \cite{MR2952218}; see also proposition 9.2 and theorem 9.5 p.112 and following in \cite{MR1735654}. So we only need to prove this when $A$ is only supported in the cusps, and for negligible operators. For the Hilbert-Schmidt and the trace-class property, this are the contents of lemma \ref{lemma:HS_cusp} and proposition \ref{prop:Trace_cusp} when $A$ is supported only in the cusps. 

As to negligible operators, they can always be written down as the product of another negligible operator with some power of $(P+1)^{-1}$, and the arguments we used in the proof of lemma \ref{lemma:HS_cusp} and proposition \ref{prop:Trace_cusp} will carry on, so that what we really need to prove is that $\Pi^\ast_a A$ is compact on $L^2(M)$ when $A\in \Psi^{-1}_\rho$.

But that is a consequence of the fact that $\Pi^\ast_a H^1(M)$ is compactly injected in $L^2(M)$. Once again, as this is always true for compact manifolds with boundary (Rellich's theorem), it suffices to prove it for for the cusps. More precisely, we need to show that $\{ \mathbb{1}_{y>a}f \; | \; f \in H^1(Z_\Lambda), \Pi^\ast f = f \}$ is compactly injected in $L^2(Z_\Lambda)$. We recall the proof from \cite{MR0562288} --- see pp. 206 and following. Consider the fact that $f\in H^1(Z_\Lambda) \mapsto \mathbb{1}_{a< y< T} \Pi^\ast f\in L^2(Z_\Lambda)$ is compact. Now, using the Wirtinger inequality in the torus, one can prove 
\begin{equation*}
\| \mathbb{1}_{y>T} f\|_{L^2(Z_\Lambda)} \leq \frac{C}{T^{d+1}} \|\mathbb{1}_{y>T} \nabla f \|_{L^2(Z_\Lambda)}.
\end{equation*}
This proves that the mapping $\mathbb{1}_{y>a}\Pi^\ast : H^1(Z_\Lambda) \to L^2(Z_\Lambda)$ is the norm limit of a sequence of compact operators, so it is compact.
\end{proof}

\subsubsection{Egorov lemma for Ehrenfest times}
In this section, we give an Ehrenfest time Egorov lemma, which was the original motivation for what we have done so far. The Levi-Civita connexion on $M$ is associated to a splitting of $T T^\ast M= V \oplus H$. $V$ and $H$ are subbundles that can be identified with respectively $T^\ast M$ and $TM$. The only metric on $T^\ast M$ that renders $V$ orthogonal to $H$ and that makes those identifications isometries is called the Sasaki metric. It is in some sense the natural metric to use on $T^\ast M$ for our problem; we recall a few facts on it in appendix \ref{appendix:Sasaki}. Now the we have specified a riemannian metric on $T^\ast M$, we can define the spaces $\mathscr{C}^k(T^\ast M)$ as in appendix \ref{appendix:Sasaki}. Let us introduce a particular class of symbols :
\begin{definition}
Let $U$ be some open set of $\R^2$. For $E>0$, let $S^E_{C}$ denote the class of functions $\sigma$ on $U \times T^\ast M$ that are $\mathscr{C}^\infty(T^\ast M)$ in the second variable, supported in $(T^\ast M)_E :=\{ p\leq E \}$. Additionally require there are constants $C_k > 0$ such that
\begin{equation*}
\| \sigma(h,\tau; .) \|_{\mathscr{C}^k(T^\ast M)} \leq C_k e^{C k |\tau|}
\end{equation*}
where $(h,\tau)$ are the coordinates in $\R^2$. 
\end{definition}
From proposition \ref{prop:equivalence_norms_Sasaki}, elements of $S^E_C$ are symbols in $S^{-\infty}$ for fixed $t$. Additionally, if the open set $U$ is $\{ C |\tau| \leq \rho |\log h| \}$ with $ \rho < 1/2$, elements of $S^E_C$ are symbols in $S^{-\infty}_\rho$, and can be quantized. We will assume that $U$ takes this form in the rest of the article.

Let us point out that when $A$ is in some $\Psi^n(M)$, and $\sigma \in S^E_C$, up to a negligible operator $R$, 
\begin{equation*}
[A, \Op(\sigma) ] - \frac{h}{i} \Op(\{ \sigma(A), \sigma \}) = \Op(\tilde{\sigma}) + R
\end{equation*}
where $\tilde{\sigma}$ is $\mathcal{O}((he^{C|\tau|})^2)$ in $S^E_C$.

Let us introduce
\begin{definition}\label{def:max_lyapunov}
The \emph{maximal Lyapunov exponent} of the geodesic flow on $(T^\ast M)_E$ is defined as
\begin{equation*}
\lambda_{max}(E) := \sup_{\xi \in (T^\ast M)_E}\limsup\limits_{t\to \infty} \frac{1}{|t|}\log \|  d_\xi \varphi_t \|.
\end{equation*}
\end{definition}
Using Jacobi fields and Rauch's comparison theorem --- see 1.28 in section 1.10 of \cite{MR0458335} --- one can prove that $\lambda_{max}(E)$ is bounded by $E \kappa$ where $-\kappa$ is the minimum of the curvature of $M$. Observe that proposition \ref{prop:estimating_derivatives_propagation_flow} implies that for any $\lambda > \lambda_{max}(E)$, and any $f\in \mathscr{C}^{\infty}(T^\ast M)$ supported in $(T^\ast M)_E$, $f\circ \varphi_t$ is in $S^E_{\lambda}$.

Recall that the Schr\"odinger propagator is 
\begin{equation*}
U(t) = e^{-it P/h}
\end{equation*}
We have
\begin{theorem}\label{thm:Egorov_lemma}
Let $\sigma \in \mathscr{C}^{\infty}(T^\ast M)$ be supported in $(T^\ast M)_E$. Then, for any $\rho < 1/2$ and any $\lambda > \lambda_{max}(E)$, there exists a symbol $\tilde{\sigma}_\rho$ that is in $S^E_{\lambda}$, with $U=\{ |\tau| \leq \rho |\log h|/\lambda\}$. On $U$,
\begin{equation*}
\tilde{\sigma}(t, x, \xi) = \sigma(\varphi_t(x,\xi)) + \mathcal{O}(h |t| e^{2\lambda |t|}),
\end{equation*}
and
\begin{equation*}
U(-t)\Op(\sigma)U(t) = \Op(\tilde{\sigma}) + \mathcal{O}( (|t| h e^{2\lambda |t|})^\infty)
\end{equation*}
where the remainder is asymptotically smoothing.
\end{theorem}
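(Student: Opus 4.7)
The plan is a standard Egorov iteration carried out inside the exotic symbol calculus $\Psi_\rho$ built in Section~1, with the Ehrenfest window $|t|\leq \rho|\log h|/\lambda$, $\rho<1/2$, chosen exactly so that the small parameter $h e^{2\lambda|t|}\leq h^{1-2\rho}$ stays $\ll 1$ throughout $U$. Writing $A(t):=U(-t)\Op(\sigma)U(t)$, I look for a formal expansion $\tilde\sigma(t)\sim\sum_{k\geq 0}h^k\sigma_k(t)$ with $\sigma_0(t):=\sigma\circ\varphi_t$ and with each $\sigma_k(t)$ in $S^E_\lambda$, arranged so that $\Op(\tilde\sigma(t))$ satisfies the Heisenberg equation $\partial_tB=(i/h)[P,B]$ to all orders of $h$.

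The first input is that $\sigma_0(t)=\sigma\circ\varphi_t$ genuinely belongs to $S^E_\lambda$ on $U$. This follows from the bounds on $d^k\varphi_t$ of proposition~\ref{prop:estimating_derivatives_propagation_flow} (valid because $\lambda>\lambda_{\max}(E)$) together with the invariance of $(T^\ast M)_E$ under the flow, since $p$ is flow-invariant. The remark stated just before the theorem then places $\sigma_0(t)\in S_\rho^{-\infty}$ on $U$, so $\Op(\sigma_0(t))$ is a legitimate element of $\Psi_\rho^{-\infty}$.

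The iteration proceeds via the Duhamel identity
\begin{equation*}
A(t)-\Op(\tilde\sigma(t))=\int_0^t U(-s)\bigl[\tfrac{i}{h}[P,\Op(\tilde\sigma(t-s))]-\Op(\partial_t\tilde\sigma(t-s))\bigr]U(s)\,ds,
\end{equation*}
combined with the commutator expansion recalled immediately before the theorem: $(i/h)[P,\Op(\tau)]=\Op(\{p,\tau\})+\Op(r)$ where $r$ is $\mathcal{O}((he^{2\lambda|s|})^2)$ in $S^E_\lambda$, since only odd powers of $h$ appear in the Weyl commutator. Requiring $(\partial_t-X_p)\sigma_0=0$ cancels the leading term, and one then solves the recursion $(\partial_t-X_p)\sigma_k(t)=F_k(\sigma_0,\ldots,\sigma_{k-1})$ by integrating its right-hand side along geodesics. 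An induction shows that $h^k\sigma_k(t)$ has size $(|t|\,h\,e^{2\lambda|t|})^k$ in $C^0$, with $\mathscr{C}^n$ norm multiplied by $e^{\lambda n|t|}$, hence $h^k\sigma_k(t)$ sits in $h^{k(1-2\rho)}S_\rho^{-\infty}$ on $U$ up to logarithmic factors. A Borel summation inside $S_\rho^{-\infty}$ then produces a genuine symbol $\tilde\sigma$ agreeing with the formal series to all orders, and the remainder $A(t)-\Op(\tilde\sigma(t))$ becomes $\mathcal{O}((|t|he^{2\lambda|t|})^\infty)$ as an asymptotically smoothing family, uniformly in $t\in U$. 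The first correction is bounded in $C^0$ by $|t|\,h\,e^{2\lambda|t|}$, which gives the stated form of $\tilde\sigma$.

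The principal obstacle is keeping every stage of the iteration consistently inside $S_\rho$ while $|t|$ ranges up to order $|\log h|$: each pair of derivatives taken in the composition lemma~\ref{prop:composition_lemma} costs a factor $h^{-2\rho}$ in seminorm estimates, and combining this loss with the flow-induced factor $e^{2\lambda|t|}$ produces precisely the condition $2\rho+2\lambda|t|/|\log h|<1$, which is the Ehrenfest time. A secondary and technical point is the passage from a formally negligible remainder to an \emph{asymptotically} smoothing one in the sense of definition~\ref{def:smoothing_cusp}; this is handled routinely by letting the truncation index $N$ of the Borel sum depend on the desired Sobolev order, exactly as in the standard compact-manifold proof, with corollary~\ref{cor:composition_negligible} absorbing negligible errors under the propagators $U(\pm s)$.
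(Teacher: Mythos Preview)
Your proposal is correct and follows essentially the same route as the paper: start with $\sigma_0=\sigma\circ\varphi_t$, use the Weyl commutator expansion with $P$ to identify the error, feed it back through the transport equation $(\partial_t-X_p)\sigma_k=F_k$ by integrating along the flow, and Borel-sum. The paper organizes the bookkeeping by tracking the pair $(b_n,e_n)$ with $c_{n+1}=(i/h)\int_0^t e_n(s)\circ\varphi_{t-s}\,ds$ and then applies Duhamel exactly as you do, but the substance is the same.
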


Since Beal's theorem --- see theorem 8.3 in \cite{MR2952218} --- is not available to us, we can only prove that the remainder is \emph{asymptotically} smoothing.

\begin{proof}
Let us assume that we found an exact solution $\tilde{\sigma}$. Then, we would have :
\begin{align*}
\Op(\tilde{\sigma}) &= e^{ i  t P/h} \Op(\sigma) e^{- i  t P /h} \\
\intertext{i.e}
\Op(\sigma) 		&= e^{- i  t P/h} \Op(\tilde{\sigma}) e^{ i  t P /h}. \\
\intertext{Differentiating with $t$,}
0					&= e^{- i  t P/h}\left[ \Op(\partial_t \tilde{\sigma}) - \frac{ i }{h} [P, \Op(\tilde{\sigma})] \right] e^{ i  t P/h}
\end{align*}

All along our development, we will follow the proof in \cite{MR2952218} closely. Let us build by induction a family of operators
\begin{equation*}
B_n(t) = \Op(b_n) \quad , \quad E_n(t) = \Op(e_n)
\end{equation*}
where $b_n$ and $e_n$ are in $S^E_\lambda$, satisfying :
\begin{align*}
\frac{h}{ i }\partial_t B_n &= [P, B_n] + E_n +  R_n.\\
B_n(0)							&= \Op(\sigma)
\end{align*}
the remainder $R_n$ being negligible, and with the estimates :
\begin{align*}
b_n - b_{n-1} &= \mathcal{O}_{S^E_\lambda}((|t|h)^n e^{2n\lambda |t|}) \text{ for $n>0$}\\
e_n			&= \mathcal{O}_{S^E_\lambda}(h^{2+n} |t|^n e^{(2n+2)\lambda |t|}).
\end{align*}

For $n=1$, define 
\begin{equation*}
B_0 = \Op(\sigma\circ \varphi_t).
\end{equation*}
This is $\mathcal{O}(1)$ in $S^E_\lambda$. Then
\begin{align*}
\frac{h}{ i }\partial_t B_0 &= \frac{h}{ i } \Op(\{p, \sigma\circ\varphi_t\}) \\
		&= [P, B_0] + E_0 + R_0,
\end{align*}
where $R_0$ is negligible and $E_0 = \Op(e_0)$. From the product formula, we get that $e_0$ is still supported in $(T^\ast M)_E$, and it is $\mathcal{O}( h^2 e^{2\lambda |t|})$ in $S^E_\lambda$.

Assume that all the assumptions hold for some $n \geq 0$, and let
\begin{align*}
c_{n+1} &= \frac{ i }{h}\int_0^t e_n(s) \circ \varphi_{t-s}  d s.\\
\intertext{$c_{n+1}$ is in $S^E_\lambda$, and it is $\mathcal{O}((|t| h)^{n+1} e^{(2n+2)\lambda |t|})$. One gets}
\frac{h}{ i }\partial_t \Op(c_{n+1}) &= \frac{h}{ i }\Op(\{p, c_{n+1}\} + \frac{ i }{h} e_n ) \\
	&= [P, \Op(c_{n+1})] + E_n - E_{n+1} + R_{n+1}\\
\intertext{where}
E_{n+1}&= \Op(e_{n+1}) \text{ with } e_{n+1}= O_{S^E_\lambda}( h^{3+n} |t|^{n+1} e^{(2n+4)\lambda|t|}) \\
\intertext{At last, define}
B_{n+1} &= B_n - C_{n+1}
\end{align*}
Such $b_n$'s and $e_n$'s satisfy the announced properties. Now, since
\begin{equation*} 
\frac{h}{ i } \partial_t [e^{- i  t P/h} B_n e^{ i  t P/h}] = e^{- i  t P/h} (E_n + R_n) e^{ i  t P/h}
\end{equation*}
Duhamel's formula gives :
\begin{equation*}
\begin{split}
B_n (t) &= e^{ i  t P/h} \Op(\sigma) e^{- i  t P /h} \\
		& {} - \frac{ i }{h} \int_0^t U(s-t)[E_n(s) + R_n(s) ] U(t-s)  d s.
\end{split}
\end{equation*}
Since the operators $U(t)$ are bounded from $H^s$ to $H^s$ for any $s$, we deduce that this is $\mathcal{O}((|t| h e^{2\lambda |t|})^{n+1}h^{-2N})$ in $H^{-N}\to H^N$ operator norm. We can find a symbol $\tilde{\sigma} \in S^E_\lambda$ such that
\begin{equation*}
\tilde{\sigma} \sim b_0 - \sum_1^\infty c_n = \sigma \circ \varphi_t + \mathcal{O}_{S^E_\lambda}( h|t|e^{2\lambda |t|}).
\end{equation*}

Then, $\tilde{\sigma}$ satisfies the condition of the theorem.

\end{proof}

\begin{remark}\label{remark:support_Egorov_lemma}
Following the support of the $b_n$'s, $e_n$'s, we find that $\tilde{\sigma}$ is exactly supported in $\varphi_t(\supp(\sigma))$. Actually, the whole operator is microsupported on that set ; if we multiply our conjugated operator by some $\Op(\eta)$ such that $\eta$ vanishes on $\supp{\tilde{\sigma}}$, we obtain a negligible operator (not only asymptotically).
\end{remark}

\subsection{Extending a result of Semyon Dyatlov}

\subsubsection{Spectral theory and Eisenstein functions}\label{section:spectr_theory}
The following facts on the spectral theory of the Laplacian on cusp-manifolds are contained in \cite{MR725778}. However, in that article, M\"uller considered cusps where the horizontal slices were arbitrary compact $d$-dimensional manifolds instead of tori, so that his definition of \emph{Riemannian manifolds with cusps} is more general than our cusp-manifolds. However, he also wrote an article in the case of surfaces \cite{} with the same definition of cusp, which is a good place to start if one wants to learn about cusp surfaces.

The non-negative Laplacian $-\Delta$ acting on $C_0^\infty(M)$ functions has a unique self-adjoint extension to $L^2(M)$ and its spectrum consists of
\begin{enumerate}
\item Absolutely continuous spectrum $\sigma_{ac}=[d^2/4, +\infty)$ with multiplicity $k$ (the number of cusps).
\item Discrete spectrum $\sigma_d =\{\lambda_0 = 0 < \lambda_1 \leq \dots \leq \lambda_i \leq \dots \}$, possibly finite, and which may contain eigenvalues embedded in the continuous spectrum. To $\lambda \in \sigma_d$, we associate a family of orthogonal eigenfunctions that generate its eigenspace $(u_\lambda^{i})_{i=1 \dots d_\lambda} \in L^2(M) \cap C^\infty(M)$.
\end{enumerate}

The generalized eigenfunctions associated to the absolutely continuous spectrum are the Eisenstein functions, $(E_j(x,s))_{i=1 \dots k}$. Each $E_j$ is a meromorphic family (in $s$) of smooth functions on $M$. Its poles are contained in the open half-plane $\{\Re s < d/2\}$ or in $(d/2, 1]$. The Eisenstein functions are characterized by two properties : 
\begin{enumerate}
\item $\Delta_g E_j(.,s) = s(d-s)E_j(.,s)$ 
\item In the cusp $Z_i$, $i=1 \dots k$, the zeroth Fourier coefficient of $E_j$ in the $\theta$ variable 
equals $\delta_{ij} y_i^s + \phi_{ij}(s) y_i^{1-s}$ where $y_i$ denotes the $y$ coordinate in the cusp $Z_i$ and $\phi_{ij}(s)$ is a meromorphic function of $s$.
\end{enumerate}

Let us recall the construction of the Eisenstein functions. On $M$ we define a function $y_M$ that corresponds to $y_i$ on $X_i \cap \{ y_i \geq 2a \}$, and equals $1$ on $M_0$. Let $\chi$ be a smooth monotonous function that equals $1$ on $[3a, +\infty [$, and vanishes on $]-\infty, 2a]$. We let $\chi_i$ be the function supported in cusp $Z_i$, where it is $\chi \circ y_i$. Now, let $\tilde{\chi}\in C^\infty_c(\R, [0, 1])$ such that $\tilde{\chi} \equiv 1$ on $]-\infty, \ln 4]$ and $\tilde{\chi} \equiv 0$ on $[\ln 5, +\infty[$. For $s\in \R^+$, let
\begin{equation}\label{def:sharp_cutoffs}
 \chi_s := \tilde{\chi}\left(\ln\left(\frac{y_M}{a}\right) - s \right).
\end{equation}

Take $E^0(s,x)=y_M^s$. Then consider 
\begin{equation*}
E_i(s,x):= \chi_i E^0(s,x) + (-\Delta - s(d-s))^{-1} [\Delta, \chi_i] E^0(s,x).
\end{equation*}
Since $\chi'$ is compactly supported, $[\Delta, \chi_i]E^0(s,.)$ is compactly supported and in $L^2$, so this is well defined. One can check that 
\begin{equation*}
(-\Delta - s(d-s)) E_i = -[\Delta, \chi_i] E^0 + [\Delta, \chi_i] E^0 = 0.
\end{equation*}
to see that the $E_i$'s satisfy the announced properties. Uniqueness is then straightforward. In what follows, we will use the notations :
\begin{align*}
s	&= d/2 +  i  /h + \eta(h)\\
W 	&= \frac{h^2}{2}s(d-s) = \frac{h^2}{2}\left[\frac{d^2}{4} + \frac{1}{h^2} - \eta^2 - 2 i  \frac{\eta}{h}\right] \\
	&= \frac{1}{2}\left[1 -2 i  \eta h + h^2\left(\frac{d^2}{4}-\eta^2\right)\right]. 
\end{align*}

Let us define the measures $\mu_{i,\eta}$ announced in the introduction. For $f\in C^0_c(T^\ast M)$ compactly supported, let
\begin{equation*}
\mu_{i,\eta}^\pm(f) := 2\eta a^{2\eta} \int_{\R\times \mathbb{T}_\Lambda} e^{-2\eta t} f\circ\varphi_{-\pm t}(a,\theta, \pm 1/a, 0) d t d \theta
\end{equation*}
This defines two Radon measures. We also recall the definition of the Wigner distributions
\begin{equation*}
\langle \mu_{i,j}^h(s),\sigma \rangle := \langle \Op(\sigma) E_i(s), E_j(s)\rangle \text{ for $\sigma\in C^\infty_c(T^\ast M)$.}
\end{equation*} 
We will prove the following theorem
\begin{theorem}\label{theorem:Extension_Semyon}
Consider $s_h = 1/2 \pm i/h + \eta(h)$. All the limits are taken when $h\to 0$.
\begin{enumerate}
	\item If $\eta(h) \to \nu > 0$, then $\eta\mu_{i,j}^h(s_h) \rightharpoonup \delta_{i,j}\pi \mu^\pm_{i,\nu}$ in $C^\infty_c(T^\ast M)'$.
	\item Assume that $M$ has negative curvature. Whenever $\eta \to 0$ with 
\begin{equation*}
\liminf \;\eta \frac{ |\log h|}{ \log |\log h|} > \lambda_{max}\left( \frac{1}{2} \right),
\end{equation*}
then $\eta \mu_{i,j}^h(s_h) \rightharpoonup \delta_{i,j}\pi\mathscr{L}_1$.
\end{enumerate}
\end{theorem}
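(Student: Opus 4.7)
The central object is a propagation identity for the Wigner distributions. Starting from $P E_i(s) = W E_i(s)$ with $P = -h^2\Delta/2$ and $W = h^2 s(d-s)/2$ (so that $\Im W = -\eta h$), and using that $P$ is self-adjoint while $\Op(\sigma) E_i$ is essentially compactly supported for $\sigma \in C^\infty_c(T^*M)$ by pseudo-locality (proposition \ref{prop:support_around_y-diagonal}), one gets
\begin{equation*}
\langle [P, \Op(\sigma)] E_i, E_j \rangle = (\bar W - W)\langle\Op(\sigma) E_i, E_j\rangle = 2i\eta h\,\langle\Op(\sigma) E_i, E_j\rangle.
\end{equation*}
Combined with the symbolic calculus identity $\frac{i}{h}[P, \Op(\sigma\circ\varphi_t)] = \partial_t\Op(\sigma\circ\varphi_t) + O(h)$, the function $F_\sigma(t) := \langle\Op(\sigma\circ\varphi_t) E_i, E_j\rangle$ satisfies the ODE $F_\sigma'(t) = -2\eta F_\sigma(t) + O(h)$, whence
\begin{equation*}
\langle\Op(\sigma\circ\varphi_t) E_i, E_j\rangle = e^{-2\eta t}\langle\Op(\sigma) E_i, E_j\rangle + R(t, h),
\end{equation*}
the remainder $R(t,h)$ being controlled, for $|t| \le \rho|\log h|/\lambda$, by the long-time Egorov lemma (Theorem \ref{thm:Egorov_lemma}).

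For part (1), I would localize $\sigma$ in a single cusp $Z_k$ at height $y \ge 3a$ (pulling back to this region via the propagation identity on a uniformly bounded time). There the Eisenstein expansion $E_i|_{Z_k} = \delta_{ik}y^s + \phi_{ik}(s) y^{1-s} + u_i^\perp$ holds, with $u_i^\perp$ exponentially decaying in $y$. A WKB stationary-phase computation gives $\Op(\sigma) y^s = \sigma(x, dy/y)\, y^s + O(h)$, and analogously $\Op(\sigma) y^{1-s} = \sigma(x, -dy/y)\, y^{1-s} + O(h)$; the cross terms $\langle\Op(\sigma) y^s, y^{1-s}\rangle$ vanish to all orders in $h$ via repeated integration by parts against the rapid oscillation $y^{\pm 2i/h}$. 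Parametrizing the cusp by $y = ae^t$ along the vertical geodesic through $(a, \theta, \pm 1/a, 0)$ identifies
\begin{equation*}
\int_{Z_k}\sigma(x, \pm dy/y) y^{\pm 2\eta - 1}\, dy\, d\theta = \frac{\pm 1}{2\eta}\, \mu^{\pm}_{k,\eta}(\sigma),
\end{equation*}
so that, after multiplication by $\eta$ and collecting the normalizations, $\eta\mu^h_{i,j}(s_h)(\sigma) \to \delta_{ij}\pi\mu^{\pm}_{i,\nu}(\sigma)$.

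For part (2), combine part (1) with the convergence $\mu^{\pm}_{i,\eta} \rightharpoonup \mathscr{L}_1$ as $\eta \to 0$ (Lemma \ref{lemma:convergence_measure}, via Babillot's local product structure argument in negative curvature). The propagation identity, integrated against $2\eta\, e^{-2\eta t}\, dt$ on $[0, T]$, gives
\begin{equation*}
\langle\Op(\sigma) E_i, E_j\rangle = \frac{2\eta}{1-e^{-2\eta T}}\int_0^T\langle\Op(\sigma\circ\varphi_t) E_i, E_j\rangle\, dt + \text{Egorov errors},
\end{equation*}
and applying part (1) pointwise in $t$ to the integrand, then letting $T$ tend to the Ehrenfest scale, converts the Cesàro-type average into the Liouville average via the $e^{-2\eta t}$-covariance of $\mu^{\pm}_{i,\eta}$ and ergodicity of the geodesic flow.

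The main obstacle is the matching of two time scales. The Egorov error scales as $h|t|e^{2\lambda|t|}$, permitting only $|t| \le \rho|\log h|/\lambda$; the decay factor $e^{-2\eta t}$ forces the integration to reach $t \sim 1/\eta$ to see the flow. Consistency therefore requires $\eta \gtrsim 2\lambda_{\max}/|\log h|$, with an extra $\log|\log h|$ factor to absorb the polynomial prefactor $|t|$ — this is exactly the threshold \eqref{eq:slow_condition}. Negative curvature, by bounding $\lambda_{\max}(1/2)$ (definition \ref{def:max_lyapunov}), is precisely what makes this matching possible.
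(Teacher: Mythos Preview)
Your propagation identity $F'_\sigma(t) = -2\eta F_\sigma(t) + O(h)$ is correct and useful (it shows any weak limit satisfies $(X+2\nu)\mu=0$), but the scheme you build on it has real gaps that the paper's approach is designed to avoid.

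\textbf{Part (1): the cusp computation is incomplete.} Your reduction ``localize $\sigma$ in a cusp $Z_k$ via bounded-time propagation'' does not work: for a general compactly supported $\sigma$ there is no finite $t$ with $\sigma\circ\varphi_t$ supported in a single cusp --- trajectories through $\supp\sigma$ fan out into all cusps and into the compact core. More seriously, even for $\sigma$ already supported in cusp $Z_i$, your expansion $E_i|_{Z_i} = y^s + \phi_{ii}(s)y^{d-s} + u_i^\perp$ produces, besides the incoming term you identify with $\mu^+_{i,\eta}$, an outgoing contribution
\[
|\phi_{ii}(s)|^2 \int \sigma(y,\theta,-1/y,0)\, y^{-2\eta-1}\, dy\, d\theta
\]
of the same order. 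You give no control on $|\phi_{ii}(s)|$ as $h\to 0$ along $\Re s = d/2 + \eta$, and the scattering matrix is \emph{not} bounded a priori there (its poles are the resonances). The paper sidesteps $\phi$ entirely: it replaces $E_i$ on $\supp\sigma$ by the propagated \emph{incoming} WKB state $\tilde{E}^0_i = \chi_{T-\ln 3}\, e^{it(P-W)/h}\chi_{T+t}\chi_i\, y^s$, and bounds $\|\chi_{T-\ln 3}E_i - \tilde{E}^0_i\|_{L^2}$ by $O(e^{-\eta t}/\eta)$ using only the crude resolvent bound $\|(-\Delta - s(d-s))^{-1}\|_{L^2\to L^2} \le h/(2\eta)$ (Lemma~\ref{lemma:approx_by_lagrangian}). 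Egorov is then applied to the \emph{propagator} conjugating $\Op(\sigma)$, reducing everything to a single stationary-phase computation against $\chi_i y^s$ high in cusp $Z_i$ (Lemma~\ref{lemma:approx_dynamical_incoming_measure}). No scattering coefficient ever appears.

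\textbf{Part (2): the averaging step is not well-defined.} Your displayed identity is not what the propagation ODE gives; more importantly, you propose to ``apply part (1) pointwise in $t$ to the integrand'', but part (1) is a statement about limits with $\eta\to\nu>0$ and is vacuous when $\eta\to 0$. In the paper, parts (1) and (2) share the \emph{same} computation (\ref{eq:conclusion_lagrangian_state_reduction}) and (\ref{lemma:approx_dynamical_incoming_measure}) --- the only difference is that for (2) one must take $t = t_0|\log h|/(2\lambda)$ and track that the resolvent error $e^{-\eta t}/\eta^2$ still vanishes; this is exactly where the threshold $\eta > C_0\log|\log h|/|\log h|$ enters. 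The convergence $\mu^+_{i,\eta}\rightharpoonup\mathscr{L}_1$ (Lemma~\ref{lemma:convergence_measure}) is then applied \emph{after} the quantum-to-classical reduction, not interleaved with it.

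Finally, the $O(h)$ in your ODE hides a factor: the symbolic-calculus remainder acts on $E_i,E_j$, whose local $L^2$ norms are $O(1/\eta)$, and the propagated symbol has $\mathscr{C}^k$-norms growing like $e^{k\lambda|t|}$. So the genuine error is $O(h e^{2\lambda|t|}/\eta^2)$, and you must verify this beats $\eta F_\sigma(0)$ --- another place where the paper's bookkeeping is essential.
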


The case when $\eta \to \nu >0$ was proven in dimension $2$ by Semyon Dyatlov in \cite{MR2913877}. We cautiously follow the steps of his proof, paying attention to the constants. The long time Egorov lemma is really what enables us to extend S. Dyatlov's result and get (2).

The proof is divided into three parts. We first approximate the Eisenstein series by a Lagrangian state propagated by the Schr\"odinger flow. Such an approximation cannot work near the spectrum, and that is why the approach taken here probably cannot be improved to capture resonances arbitrarily close to the spectrum. Then, we use the Egorov lemma to reduce the problem to a stationary phase computation in the cusp. The last part of the proof is a dynamical argument, from Babillot; we essentially prove that incoming horocycles from the cusp equidistribute in $M$.

It suffices to consider the case $\Im s \to + \infty$, the other can be deduced thereof.

\subsubsection{Reduction to a lagrangian expression}
We fix an exponent $\lambda > \lambda_{max}(1/2)$. Observe that since the hamiltonian $p$ of the geodesic flow is 2-homogeneous, $\varphi_t(\kappa \xi) = \kappa \varphi_{\kappa t}(\xi)$. Consider $\Phi_\kappa : T^\ast M \to T^\ast M$ the multiplication by $\kappa$. Then 
\begin{equation*}
d \varphi_t = d\Phi_\kappa \circ d \varphi_{\kappa t}\circ  d \Phi_\kappa^{-1}.
\end{equation*}
If $\kappa \geq 1$, we have $\|d\Phi_\kappa \| = \kappa$ and $\| d\Phi_\kappa^{-1} \| = 1$ (by inspecting the behaviour of $d\Phi_\kappa$ on the vertical and horizontal bundles of $T^\ast M$). We deduce that 
\begin{equation*}
\lambda_{max}(\kappa E) = \kappa \lambda_{max}(E).
\end{equation*}
It follows that for any $\epsilon > 0$ sufficiently small, $\lambda > \lambda_{max}(E = 1/2 + \epsilon)$.

Let us take $T>0$ such that $\sigma$ is supported in $\{y_M \leq ae^T \}$. We aim to replace $E_i(s)$ on the support of $\sigma$ by a propagated incoming wave. That is why we define :
\begin{align*}
\tilde{E}^0_i(s,t) & = \chi_{T-\ln 3} e^{\frac{ i }{h}t(P-W)} \chi_{T+t} \chi_i E^0(s) \\
\tilde{E}_i(s,t)   & = \chi_{T-\ln 3} e^{\frac{ i }{h}t(P-W)} \chi_{T+t} E_i (s) 
\end{align*}
and prove :
\begin{lemma}\label{lemma:approx_by_lagrangian} When $\eta$ remains bounded,
\begin{equation*}
\| \chi_{T-\ln 3} E_i(s) - \tilde{E}^0_i(s,t) \|_{L^2} = O\left(\frac{e^{-\eta t}}{\eta}\right) + \mathcal{O}( (|t| he^{2\lambda|t|})^\infty).
\end{equation*}
\end{lemma}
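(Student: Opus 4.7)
The plan is to split the difference $\chi_{T-\ln 3}E_i - \tilde E_i^0$ into two pieces, one controlled by a resolvent estimate that yields the $e^{-\eta t}/\eta$ remainder, and one controlled by the Egorov lemma that yields the $(|t|h e^{2\lambda|t|})^\infty$ remainder. First, the support of $\phi:=\chi_{T-\ln 3}$ lies in $\{y_M\leq \tfrac{5}{3}ae^T\}$, which is strictly inside the region $\{y_M\leq 4ae^{T+t}\}$ on which $\psi:=\chi_{T+t}$ equals $1$ (as soon as $t\geq 0$), so $\chi_{T-\ln 3}E_i=\phi\psi E_i$. Setting $U(s):=e^{is(P-W)/h}$ and using the defining splitting $E_i=\chi_iE^0+(-\Delta-s(d-s))^{-1}[\Delta,\chi_i]E^0$,
\begin{equation*}
\chi_{T-\ln 3}E_i-\tilde E_i^0 \;=\; \underbrace{\phi\bigl(\psi E_i-U(t)\psi E_i\bigr)}_{(A)} \;+\; \underbrace{\phi\,U(t)\psi(-\Delta-s(d-s))^{-1}[\Delta,\chi_i]E^0}_{(B)}.
\end{equation*}

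For $(B)$, the scalar factor $e^{-itW/h}$ of $U(t)$ has modulus $e^{-\eta t}$ since $\Im W=-\eta h+O(h^2)$, so $\|U(t)\|_{L^2\to L^2}=e^{-\eta t}$. The resolvent has norm at most $h/(2\eta)$ because $|\Im s(d-s)|=2\eta/h$, and $\|[\Delta,\chi_i]E^0\|_{L^2}=O(1/h)$ since $y^s$ is bounded on the compact support of $\chi_i'$ while each $\partial_y$ applied to $y^s$ loses $|s|=O(1/h)$. Multiplying these three bounds yields the advertised $O(e^{-\eta t}/\eta)$.

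For $(A)$, differentiating the identity $(P-W)E_i=0$ shows $(P-W)(\psi E_i)=-(h^2/2)[\Delta,\psi]E_i$, and Duhamel's formula gives
\begin{equation*}
(A) \;=\; \frac{ih}{2}\int_0^t \phi\,U(s)\,[\Delta,\psi]E_i\,ds.
\end{equation*}
The source $[\Delta,\psi]E_i$ is spatially confined to the shell $y_M\in[4ae^{T+t},5ae^{T+t}]$; on this shell the nonzero $\theta$-Fourier modes of $E_i$ decay super-exponentially (like $e^{-|k|y_M}$ via the modified Bessel equation), so up to a negligible error the source reduces to two Lagrangian states coming from the zero mode $\delta_{ij}y_i^s+\phi_{ij}(s)y_i^{1-s}$, supported respectively on the outgoing and incoming radial covectors $Y=\pm 1/y$ on $\{p=1/2\}$. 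Theorem \ref{thm:Egorov_lemma} converts $\phi\,U(s)$ into $U(s)\,\Op(\phi\circ\varphi_s)$ modulo $O((|s|h e^{2\lambda|s|})^\infty)$ uniformly for $|s|\leq t\leq \rho|\log h|/\lambda$. A direct Hamilton-flow calculation ($\dot y=y^2 Y$) shows the outgoing ray only climbs higher, while the incoming ray starting at $y_0\geq 4ae^{T+t}$ reaches altitude $y_0 e^{-s}\geq 4ae^T$ by time $s\leq t$, which is still above $\supp\phi\subset\{y_M\leq\tfrac{5}{3}ae^T\}$. Hence $\phi\circ\varphi_s$ vanishes on the wavefront set of $[\Delta,\psi]E_i$ for every $s\in[0,t]$, so each term in the integrand is $O((|s|h e^{2\lambda|s|})^\infty)$ in $L^2$ and the $s$-integral retains the same form.

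The main obstacle will be making this microlocal vanishing rigorous for objects that are not literally quantizations of compactly supported symbols: one has to combine the spatial cutoff produced by $[\Delta,\psi]$ with a microlocal partition of unity isolating the two radial Lagrangians, apply Egorov separately to each piece, and check that the losses $h^{-\rho}$ incurred when the cutoffs approach the Ehrenfest threshold do not swamp the $h^\infty$ gain coming from the disjointness of $\supp(\phi\circ\varphi_s)$ from the wavefront set of the source.
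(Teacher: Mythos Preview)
Your decomposition into $(A)$ and $(B)$ is exactly the paper's splitting $\chi_{T-\ln 3}E_i-\tilde E_i^0=(\chi_{T-\ln 3}E_i-\tilde E_i)+(\tilde E_i-\tilde E_i^0)$, and your estimate for $(B)$ is the paper's Lemma~\ref{lemma:approx_by_lagrangian_1} verbatim. For $(A)$ you and the paper both use the Duhamel formula, so the overall architecture is the same.

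Where you diverge is in estimating the integrand $\phi\,U(s)[\Delta,\psi]E_i$. You propose to decompose $E_i$ into Fourier modes, discard the nonzero modes by Bessel decay, and then apply Egorov to $\phi$ so as to move it to $\Op(\phi\circ\varphi_s)$. As you note yourself, this last step is not licensed by Theorem~\ref{thm:Egorov_lemma}: $\phi=\chi_{T-\ln 3}$ is a function of $y_M$ alone, hence its symbol is not supported in any $(T^\ast M)_E$, and the theorem only treats symbols with that support. Your proposed repair (microlocal partition onto the two radial Lagrangians) would work but is heavier than necessary.

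The paper's route for $(A)$ is simpler and avoids the Fourier/Lagrangian analysis entirely. It first inserts an energy cutoff $F=\Op(f\circ p)$ with $f$ supported near $1/2$, using the eigenvalue equation to show $(1-F)[P,\psi]E_i=O_{L^2}(h^\infty)$ via the parametrix $1-F=\Op(r_n)(P-1/2)^n$ and $(P-1/2)E_i=O(\eta h)E_i$. It then inserts a spatial cutoff $g\circ y_M$ equal to $1$ on $\supp\psi'$ and applies Egorov to the \emph{right-hand} factor $Fg$, whose symbol $f(p)g(y_M)$ is genuinely compactly supported in $T^\ast M$. The dynamical separation then needs only the uniform bound $|\dot y|\leq y\sqrt{1+2\epsilon}$ on $\{p\leq 1/2+\epsilon\}$, valid for \emph{all} $J$, so that a point starting at height $\geq 4ae^{T+t}$ cannot descend below $\sim 4ae^T>\tfrac{5}{3}ae^T$ in time $\leq t$. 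This sidesteps both the mode decomposition and the issue of applying Egorov to a non-energy-localized symbol.
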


\begin{proof}
We write 
\begin{equation*}
\chi_{T-\ln 3} E_i - \tilde{E}^0_i =  (\chi_{T-\ln 3} E_i - \tilde{E}_i) + (\tilde{E}_i - \tilde{E}^0_i).
\end{equation*}
Then, we prove successively 
\begin{lemma}\label{lemma:approx_by_lagrangian_1}
\begin{equation*}
\tilde{E}_i - \tilde{E}^0_i= O_{L^2}\left(\frac{e^{-\eta t}}{\eta}\right).
\end{equation*}
\end{lemma}
and
\begin{lemma}\label{lemma:approx_by_lagrangian_2}
\begin{equation*}
\chi_{T-\ln 3} E_i - \tilde{E}_i   = O_{L^2}(( |t| he^{2\lambda |t|})^\infty).
\end{equation*}
\end{lemma}
\end{proof}

we start with lemma \ref{lemma:approx_by_lagrangian_1}.
\begin{proof}
We have 
\begin{equation*}
\tilde{E}_i - \tilde{E}^0_i = \chi_{T-\ln 3} e^{\frac{ i  t}{h} (P-W)} \chi_{T+t} (-\Delta - s(d-s))^{-1}[\Delta, \chi_i ] E^0
\end{equation*}
Thus 
\begin{equation*}
\|\tilde{E}_i - \tilde{E}^0_i\|_{L^2} \leq e^{-\Re \frac{ i  t W}{h}} \| (-\Delta - s(d-s))^{-1}\|_{L^2 \to L^2} \| [\Delta, \chi_i ] E^0\|_{L^2}.
\end{equation*}
since $\Delta$ is self adjoint, we have
\begin{equation*}
\| (-\Delta - s(d-s))^{-1}\|_{L^2 \to L^2} \leq \frac{h}{2\eta}.
\end{equation*}
What is more, $\Re ( i  t W) = h\eta t$. Now, 
\begin{equation*}
[\Delta, \chi_i] E^0 = (\Delta \chi_i)E^0 + 2y s \partial_y \chi_i E^0  = O_{L^2}\left(\frac{1}{h}\right).
\end{equation*}
putting all three inequalities together, we conclude.
\end{proof}
we go on to lemma \ref{lemma:approx_by_lagrangian_2}.
\begin{proof}
When $t=0$, 
\begin{equation*}
\tilde{E}_i (0) = \chi_{T-\ln 3} E_i
\end{equation*}
because $\chi_{T-\ln 3}\chi_T = \chi_{T-\ln 3}$ ($s+\ln 3 \leq \ln 5 \Rightarrow s \leq \ln 4$). For $\tau=0 \dots t$, let
\begin{align*}
A(\tau) &= \chi_{T-\ln 3} e^{\frac{ i  \tau}{h}(P-W)} \chi_{T+ t} E_i \\
\frac{ d }{ d  \tau}A &= \chi_{T-\ln 3} e^{\frac{ i  \tau}{h}(P-W)} \frac{ i }{h} [ P, \chi_{T+t}] E_i.
\end{align*}
We want to use Egorov's lemma, first we need to localize the expression. Let $\epsilon >0$ be small enough, and take $f\in C^\infty_c(\R)$ so that $f$ is supported at distance less than $\epsilon$ of $1/2$ and equals $1$ near $1/2$. Let 
\begin{equation*}
F = \Op(f\circ p).
\end{equation*}
$F$ is a parametrix for $f(P)$, but we do not use that fact. We claim that
\begin{equation*}
(1-F) [P, \chi_{T+t}]E_i (s,.) = O_{L^2}(h^\infty)
\end{equation*}
First, remark that $f\circ p$ is indeed a symbol in the class $S^{-\infty}_0$. By ellipticity, we can solve
\begin{equation*}
1-F = \Op(r_n) (P-1/2)^n
\end{equation*}
for all $n\in \N$, with symbols $r_n$ in $S^{-2n}_0$. Observe
\begin{equation*}
(P-1/2) E_i = (W-1/2)E_i = \mathcal{O}(\eta h) E_i.
\end{equation*}
and 
\begin{equation*}
\left(P-1/2\right)^n [P,\chi] = \sum\limits_{k=0}^n \genfrac{(}{)}{0pt}{}{n}{k} P^{[k+1]}[\chi] \left(P - 1/2\right)^{n-k}
\end{equation*}
where $P^{[k]}[\chi]= [P, [ P, \dots, [P,\chi]\dots ]$ with $k$ occurences of $P$. From the proof of lemma \ref{lemma:approx_by_lagrangian_1}, we know that the $L^2$ norm of $E(s,.)$ restricted to any compact set is $\mathcal{O}(1/\eta)$. Now, since $r_n \in S^{-2n}_0$, $\Op(r_n)P^k[\chi]$ --- $k \leq n+1$ --- is bounded on $L^2$ with norm $h^k$, and is compactly supported; the claim follows since $\eta$ is bounded.

Now, we have localized our formulae in the momentum variable :
\begin{equation}\label{eq:1}
\frac{ d }{ d l}A = \chi_{T-\ln 3} e^{\frac{ i  l}{h}(P-W)} \left( \frac{ i }{h} F [ P, \chi_{T+t}] E_i + O_{L^2}(h^\infty) \right).
\end{equation}

According to the support hypothesis we have made, we can pick a function $g\in C^\infty_c(\R)$ such $g\circ y_M\equiv 1$ on a neighbourhood of $\supp(\partial_y \chi_{T+t})$, and that for all $0< l < t$, $\varphi_{-l}(\supp(f\circ p \times g\circ y_M))$ does not intersect the $\delta$-neighbourhood of $\supp(\chi_{T-\ln 3})$ where $\delta$ is some positive number. 

We can insert $1=g + 1-g$ in \eqref{eq:1} between $F$ and $[P, \chi_{T+t}]$. Now, remark \ref{remark:support_Egorov_lemma} gives that for $\epsilon>0$ small enough,
\begin{equation*}
\chi_{T-\ln 3} e^{\frac{ i  l}{h}(P-W)} Fg = e^{-\eta l} O_{L^2\to L^2}( (|t| h e^{2\lambda |t|})^\infty)
\end{equation*}

Since $\| [P,\chi_{T+t}] E_i \|_{L^2}$ is bounded by some finite power of $h$, we can conclude.
\end{proof}

We deduce the following lemma :
\begin{lemma}
For $\epsilon >0$ small enough, there is a symbol $\sigma_\epsilon$ that is supported at distance $\leq \epsilon$ of the energy shell $\{ p= 1/2\}$, and coincides with $\sigma$ on the neighbourhood $\{ 1/2 - \epsilon/2 \leq p \leq 1/2 + \epsilon/2\}$, such that
\begin{equation*}
\langle \Op(\sigma) E_i ,  E_j \rangle = \langle \Op(\sigma_\epsilon) \tilde{E}_i^0, \tilde{E}_j^0\rangle + \mathcal{O}\left( \frac{e^{-\eta t}}{\eta^2}\right) + \mathcal{O}((|t| he^{2\lambda |t|})^\infty) + \mathcal{O}(h^{\infty}).
\end{equation*}
\end{lemma}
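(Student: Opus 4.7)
Take $\sigma_\epsilon := \sigma \cdot \psi(p - 1/2)$ where $\psi \in C^\infty_c(\R)$ is supported in $[-\epsilon,\epsilon]$ and equal to $1$ on $[-\epsilon/2,\epsilon/2]$; the support and coincidence conditions are then immediate. The strategy is to decompose
\begin{equation*}
\langle \Op(\sigma) E_i, E_j\rangle = \langle \Op(\sigma-\sigma_\epsilon) E_i, E_j\rangle + \langle \Op(\sigma_\epsilon) E_i, E_j\rangle
\end{equation*}
and to handle the two pieces separately: the first will be absorbed into $\mathcal{O}(h^\infty)$ by microlocal ellipticity off the energy shell, and the second will be converted to the target expression by inserting the cutoff $\chi_{T-\ln 3}$ and invoking Lemma \ref{lemma:approx_by_lagrangian}.

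For the first piece, note that $\sigma - \sigma_\epsilon$ is compactly supported in $\{|p-1/2|\geq \epsilon/2\}$, so $p-1/2$ is elliptic on its support. Iterating the parametrix construction of Lemma \ref{lemma:parametrix_1} (as in the proof of Lemma \ref{lemma:approx_by_lagrangian_2}), for any $N \in \N$ I can find a compactly supported symbol $q_N$ with the same support properties such that
\begin{equation*}
\Op(\sigma - \sigma_\epsilon) = \Op(q_N)(P - 1/2)^N + \mathcal{O}(h^\infty)\Psi^{-\infty}.
\end{equation*}
Since $(P-W)E_i = 0$ with $W - 1/2 = -i\eta h + \mathcal{O}(h^2)$, I have $(P-1/2)^N E_i = (W-1/2)^N E_i = \mathcal{O}(h^N) E_i$ whenever $\eta$ stays bounded. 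Pseudo-locality (Proposition \ref{prop:support_around_y-diagonal}) lets me replace $E_i, E_j$ by their restriction to a slightly enlarged compact neighbourhood of $\operatorname{supp}(q_N)$, on which $\|E_\ell\|_{L^2} = \mathcal{O}(1/\eta)$ by the argument in Lemma \ref{lemma:approx_by_lagrangian_1}. Combined with $\|\Op(q_N)\|_{L^2 \to L^2} = \mathcal{O}(1)$ from Proposition \ref{prop:optimal_L^2_regularity}, this gives $|\langle \Op(\sigma - \sigma_\epsilon) E_i, E_j\rangle| = \mathcal{O}(h^N/\eta^2)$, which is $\mathcal{O}(h^\infty)$ for $N$ arbitrary (noting $1/\eta^2$ is at worst polylogarithmic in $h$).

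For the second piece, the spatial support of $\sigma_\epsilon$ lies in $\{y_M \leq ae^T\}$, while $1 - \chi_{T-\ln 3}$ is supported in $\{y_M \geq ae^{T+\ln(4/3)}\}$; these are disjoint $y$-sets. Proposition \ref{prop:support_around_y-diagonal} then gives
\begin{equation*}
\Op(\sigma_\epsilon) = \chi_{T-\ln 3}\, \Op(\sigma_\epsilon)\, \chi_{T-\ln 3} + \mathcal{O}(h^\infty)\Psi^{-\infty},
\end{equation*}
and the negligible remainder pairs against $E_i, E_j$ to give $\mathcal{O}(h^\infty)$ (its kernel decays faster than any polynomial in each variable, dominating the $y_M^{d/2}$ growth of the Eisenstein series in the cusps). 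Then I apply Lemma \ref{lemma:approx_by_lagrangian} in the form $\chi_{T-\ln 3}E_\ell = \tilde E_\ell^0 + r_\ell$ with $\|r_\ell\|_{L^2} = \mathcal{O}(e^{-\eta t}/\eta) + \mathcal{O}((|t|he^{2\lambda|t|})^\infty)$, expand bilinearly, and bound each cross term by $\|\Op(\sigma_\epsilon)\|_{L^2\to L^2}\,\|r_i\|_{L^2}\,\|\chi_{T-\ln 3}E_j\|_{L^2}$ (or the symmetric analogue) using $\|\chi_{T-\ln 3}E_\ell\|_{L^2},\|\tilde E_\ell^0\|_{L^2} = \mathcal{O}(1/\eta)$. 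Each cross term is $\mathcal{O}(e^{-\eta t}/\eta^2) + \mathcal{O}((|t|he^{2\lambda|t|})^\infty)$, which combines with the main term $\langle \Op(\sigma_\epsilon)\tilde E_i^0, \tilde E_j^0\rangle$ to yield the claimed identity.

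The main obstacle is the $\mathcal{O}(h^\infty)$ step: making sense of the inner product $\langle \Op(\sigma-\sigma_\epsilon) E_i, E_j\rangle$ rigorously when $E_i, E_j \notin L^2$, and propagating the $h^N$ gain from ellipticity through the non-$L^2$ setup without losing control through the $1/\eta$ factors. The solution is to combine the parametrix with enough pseudo-locality to confine the computation to a compact set (where $\|E_\ell\|_{L^2} = \mathcal{O}(1/\eta)$), which is precisely the mechanism underlying the proof of Lemma \ref{lemma:approx_by_lagrangian_2}.
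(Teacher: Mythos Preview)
Your proposal is correct and follows essentially the same strategy as the paper: energy-localize via a parametrix for $(P-1/2)^N$, spatially cut off with $\chi_{T-\ln 3}$ via pseudo-locality, then expand bilinearly using Lemma~\ref{lemma:approx_by_lagrangian} and the bound $\|\chi_{T-\ln 3}E_\ell\|_{L^2} = \mathcal{O}(1/\eta)$. The only substantive difference is the order of operations: the paper first inserts $\chi_{T-\ln 3}$ (so that everything lives in $L^2$ from the start), and only then introduces the energy cutoff $F=\Op(f\circ p)$, defining $\sigma_\epsilon$ as the symbol of $\Op(\sigma)F$; you instead define $\sigma_\epsilon=\sigma\cdot\psi(p-1/2)$ explicitly and localize in energy first.

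One point where the paper is cleaner than your write-up: to make sense of $\langle \Op(\sigma) E_i,E_j\rangle$ and of negligible remainders paired against the (non-$L^2$) Eisenstein series, the paper observes that $y_M^\delta \Op(\sigma) y_M^\delta$ is bounded on $L^2$ for some $\delta>0$ (a short computation using the compact $x$-support of $\sigma$) together with $y_M^{-\delta}E_\ell\in L^2$. This replaces your heuristic that the negligible kernel ``decays faster than any polynomial in each variable'', which is not what the definition of $\mathcal{O}(h^\infty)\Psi^{-\infty}$ gives directly. With that substitution, your argument goes through.
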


\begin{proof}
We claim that the quantity in the LHS of the equation is well defined. We only have to prove that $\Op^\delta(\sigma):=y_M^\delta \Op(\sigma) y_M^\delta$ is bounded on $L^2$ for some $\delta > 0$ since then $y^{-\delta}E_i\in L^2(M)$. It suffices to prove it in the cusps. A simple computation shows that in $Z_\Lambda$, with $a\in S^n_\rho(Z_\Lambda)$, $y^\delta \Op(a) y^\delta= \Op(y^{2\delta} a)_\zeta$ with $\zeta(x)=4^\delta (x^2 + 4)^{-\delta}$. Since $\sigma$ is compactly supported, $y_M^{2\delta}\sigma$ still is a symbol, and $\Op^\delta(\sigma)$ is bounded on $L^2$.

From the pseudo-locality properties of $\Op(\sigma)$, and the bound $\| y^{-\epsilon} E_i \|_{L^2} = \mathcal{O}(1/\eta)$, we know that
\begin{equation*}
\langle \Op(\sigma) E_i ,  E_j \rangle = \langle \Op(\sigma) \chi_{T-\ln 3} E_i , \chi_{T-\ln 3}  E_j \rangle + \mathcal{O}(h^\infty).
\end{equation*}
We use the same trick as in the previous proof : we introduce $1=F + (1-F)$ with $F=\Op(f\circ p)$, where $f$ is smooth, supported in $[1/2 - \epsilon, 1/2 + \epsilon]$, and equals $1$ on $[1/2 - \epsilon/2, 1/2 + \epsilon/2]$. Then for the same reasons as above
\begin{equation*}
\Op(\sigma) \chi_{T-\ln 3} E_i = \Op(\sigma)F \chi_{T-\ln 3} E_i + \mathcal{O}_{L^2}(h^{\infty}).
\end{equation*}
But then $\Op(\sigma)F = \Op(\sigma_\epsilon) + R$ where $R$ is a negligible operator and $\sigma_\epsilon$ is as announced. From there :
\begin{equation*}
\begin{split}
\left| \langle \Op(\sigma)  E_i ,  E_j \rangle - \langle \Op(\sigma_\epsilon) \tilde{E}_i^0, \tilde{E}_j^0\rangle \right| & \leq  \langle \Op(\sigma_\epsilon) (\chi_{T-\ln3} E_i - \tilde{E}_i^0), \chi_{T-\ln 3}E_j \rangle \\
	& {}+ \langle \Op(\sigma_\epsilon) \chi_{T-\ln 3} E_i, \chi_{T-\ln 3} E_j - \tilde{E}_j^0 \rangle \\
	& {}+ \langle \Op(\sigma_\epsilon) (\chi_{T-\ln 3}E_i - \tilde{E}_i^0), \chi_{T-\ln3} E_j - \tilde{E}_j^0 \rangle \\
	& {}+ \mathcal{O}(h^\infty) 
\end{split}
\end{equation*}
We can conclude using the previous lemma, and :
\begin{equation*}
\| \chi_{T-\ln 3} E_i \|_{L^2} \leq C + \| (-\Delta - s(d-s))^{-1} [\Delta, \chi_i] E^0 \|_{L^2} \leq C(1+ \frac{1}{\eta})
\end{equation*}
\end{proof}

From now on, we choose a small enough $\epsilon>0$. We write :
\begin{equation*}
\langle \Op(\sigma_\epsilon) \tilde{E}_i^0, \tilde{E}_j^0 \rangle = e^{-\frac{ i  t W}{h}+ \frac{ i  t \overline{W}}{h} } \langle A \chi_{T+t} \chi_i E^0, \chi_{T+t}\chi_j E^0 \rangle
\end{equation*}
where, again with the notation $U(t) = e^{-\frac{i t P}{h}}$, 
\begin{equation*}
A = U(t) \chi_{T-\ln 3} \Op(\sigma_\epsilon) \chi_{T-\ln 3} U(-t).
\end{equation*}
Here again, Egorov's lemma gives 
\begin{equation*}
A = \Op(\sigma_\epsilon\circ \varphi_{-t}) + O_{L^2 \to L^2} ( h |t| e^{2\lambda |t|}).
\end{equation*}
Actually, when $i\neq j$, $\chi_{T+t} \chi_i E^0$ and $\chi_{T+t} \chi_j E^0$ have a distinct support, so that remark \ref{remark:support_Egorov_lemma} implies that when $i\neq j$, 
\begin{equation}\label{eq:main_estimate_off_diagonal}
\eta \langle \Op(\sigma)  E_i ,  E_j \rangle = \mathcal{O}\left( \frac{e^{-\eta t}}{\eta} \right) + \mathcal{O}( \eta (|t| h e^{2\lambda |t|})^\infty) + \mathcal{O}(\eta h^\infty).
\end{equation}
Now, we assume that $i=j$, unless specifically stated. We denote $\sigma_{\epsilon,t} = \sigma_\epsilon \circ \varphi_{-t}$. We claim that when $\eta$ remains bounded and $\eta \times t \to \infty$, there are constants $C_1$ and $C_2$ (depending on $T$) such that
\begin{equation}\label{eq:principal_constant}
\frac{C_2}{\eta} \leq e^{-2\eta t} \| \chi_{T+t} \chi_i E^0 \|^2_{L^2} \leq  \frac{C_2}{\eta}.
\end{equation}
Indeed
\begin{align*}
e^{-2\eta t} \| \chi_{T+t} \chi_i E^0 \|^2_{L^2} &= \int_{y>a}  \chi_{T+t}(y)^2 \chi(y)^2 y^{1+2\eta} e^{-2\eta t} \frac{ d y}{y^2}  \\
			&= \int_{s> \ln a}  e^{2\eta s} \tilde{\chi}(s-T -t -\ln a)^2 \chi(e^s)^2 e^{-2\eta t} d s  \\
e^{-2\eta t} \| \chi_{T+t} \chi_i E^0 \|^2_{L^2}			&\leq  \int_{\ln 2a}^{\ln(5a) + T + t} e^{2\eta (s-t)} = \frac{1}{2\eta}[e^{2\eta(\ln(5a) + T)} - e^{2\eta(\ln(2a) - t)} ]  \\
			& \leq \frac{C_2}{\eta}(1+o(1))  \\
e^{-2\eta t} \| \chi_{T+t} \chi_i E^0 \|^2_{L^2}			&\geq  \int_{\ln 3a}^{\ln(4a) + T + t} e^{2\eta (s-t)} = \frac{1}{2\eta}[e^{2\eta(\ln(4a) + T)} - e^{2\eta(\ln(3a) - t)} ] \\
			& \geq \frac{C_1}{\eta}(1+o(1)).
\end{align*}

Hence, when $\eta \times t \to +\infty$, and $\lambda > \lambda_{max}$,
\begin{equation*}
\eta \langle \Op(\sigma) E_i, E_i \rangle = \eta e^{-2\eta t} \langle \Op(\sigma_t) \chi_{T+t} \chi_i E^0, \chi_{T+t} \chi_i E^0 \rangle + O\left(\frac{e^{-2\eta t}}{\eta} + (h|t|e^{2\lambda |t|})^\infty + h |t|e^{2\lambda |t|}\right).
\end{equation*}
Letting $t= t_0 |\log h |/(2\lambda)$, where $0< t_0 < 1$, and assuming 
\begin{equation*}
\eta \geq C_{\lambda} \frac{\log |\log h|}{|\log h|}
\end{equation*}
with $C_\lambda > \lambda / t_0 > \lambda_{max}(1/2) $, we find
\begin{equation}\label{eq:conclusion_lagrangian_state_reduction}
\eta \langle \Op(\sigma) E_i, E_i \rangle = \eta e^{-2\eta t} \langle \Op(\sigma_{\epsilon,t}) \chi_{T+t} \chi_i E^0, \chi_{T+t} \chi_i E^0 \rangle + o_{h\to 0}(1)
\end{equation}
For $i\neq j$, equation \ref{eq:main_estimate_off_diagonal} gives
\begin{equation}
\eta \langle \Op(\sigma) E_i, E_j \rangle = o_{h\to 0}(1).
\end{equation}

\subsubsection{Stationary phase computations}

The idea behind the proof here is that $\chi_{T+t}\chi_i E^0$ is a lagrangian state, thus mapped to another lagrangian state by $\Op(\sigma_{\epsilon,t})$ which is a pseudo-differential operator. 

\begin{lemma}\label{lemma:approx_dynamical_incoming_measure} Assume $t_0$, $\lambda$ and $\eta$ satisfy the above conditions. Then,
\begin{equation*}
\begin{split}
\eta e^{-2\eta t} \langle \Op(\sigma_{\epsilon,t}) &\chi_{T+t} \chi_i E^0, \chi_{T+t} \chi_i E^0 \rangle = \\
& \left[2 \pi a^{2\eta}\eta e^{-2\eta t} \int  d \theta  d \tau e^{2\eta \tau} [\chi_{T+t}\chi]^2(ae^\tau) \sigma_{\epsilon,t-\tau}(a, \theta, \frac{1}{a}, 0)\right] + \mathcal{O}(h^{1-t_0})
\end{split}
\end{equation*}
\end{lemma}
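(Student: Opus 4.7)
The strategy is a Lagrangian stationary phase: $\chi_{T+t}\chi_i E^0 = \chi_{T+t}(y)\chi(y) y^{s}$ is, in the cusp, a Lagrangian state whose phase $(1/h)\log y$ picks out the Lagrangian $\{J=0,\; Y=1/y\}\subset T^\ast Z_i$. Sandwiching the pseudor $\Op(\sigma_{\epsilon,t})$ between two such states should, by stationary phase, localize the symbol to this Lagrangian. I would first conjugate to the half-space by $\mathcal{L}:f\mapsto y^{-(d+1)/2}f$, so that $\langle\Op(\sigma_{\epsilon,t})\chi_{T+t}\chi_i E^0,\chi_{T+t}\chi_i E^0\rangle_{L^2(M)}=\langle\mathbf{Op}^w_h(\sigma_{\epsilon,t})g,g\rangle_{L^2(dyd\theta)}$, where $g(y)=\chi_{T+t}(y)\chi(y)\,y^{-1/2+\eta+i/h}$ is $\theta$-independent.

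The $\theta$-independence of $g$ is the key simplification. Fourier-expanding $\sigma_{\epsilon,t}=\sum_{\varpi\in\Lambda'}c_\varpi(y,Y,J)e^{i\varpi\theta}$, the outer $\theta$-integration over $\T^d_\Lambda$ kills every mode except $\varpi=0$, while the $\theta'$-integration against $e^{-i\theta'J/h}$ forces $J=0$; what remains involves only the $\theta$-average $\tilde\sigma(v,Y):=\int_{\T^d_\Lambda}\sigma_{\epsilon,t}(v,\theta,Y,0)\,d\theta$. One is reduced to a 1D oscillatory integral
\begin{equation*}
I \;=\; \frac{1}{2\pi h}\int dy\,dy'\,dY\; e^{i\Phi/h}\,\tilde\sigma\!\left(\tfrac{y+y'}{2},Y\right)A(y')A(y),
\end{equation*}
with $A(y)=\chi_{T+t}(y)\chi(y)y^{-1/2+\eta}$ and phase $\Phi=(y-y')Y+\log y'-\log y$.

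I would apply stationary phase in the variables $(u,Y)$ with $u=y-y'$ at fixed $v=(y+y')/2$. The phase becomes $\Phi=uY+\log(v-u/2)-\log(v+u/2)$, whose critical set is $u=0$, $Y=1/v$, with $2\times 2$ Hessian $\bigl(\begin{smallmatrix}0&1\\1&0\end{smallmatrix}\bigr)$: nondegenerate, signature zero, $|\det|=1$. Stationary phase yields, to leading order, $I=\int_0^\infty \tilde\sigma(v,1/v)A(v)^2\,dv + O_\cdot(\cdot)$. Now change variable $v=ae^\tau$; using that $A(v)^2=[\chi_{T+t}\chi]^2(v)\,v^{-1+2\eta}$ and $dv=ae^\tau d\tau$ produces the factor $a^{2\eta}e^{2\eta\tau}[\chi_{T+t}\chi]^2(ae^\tau)$. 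To get $\sigma_{\epsilon,t-\tau}(a,\theta,1/a,0)$ in place of $\sigma_{\epsilon,t}(ae^\tau,\theta,1/(ae^\tau),0)$, I invoke the explicit geodesic flow on the invariant set $\{J=0\}$: with $p=\tfrac12 y^2 Y^2$ we have $\varphi_\tau(a,\theta,1/a,0)=(ae^\tau,\theta,e^{-\tau}/a,0)$, so $\sigma_{\epsilon,t}(ae^\tau,\theta,e^{-\tau}/a,0)=\sigma_\epsilon\circ\varphi_{-t}\circ\varphi_\tau(a,\theta,1/a,0)=\sigma_{\epsilon,t-\tau}(a,\theta,1/a,0)$. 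Multiplying through by $\eta e^{-2\eta t}$ gives the claimed identity (up to a numerical constant to be checked).

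The main obstacle is the size of the remainder. Since $\sigma_\epsilon$ is a fixed smooth compactly supported symbol but $\sigma_{\epsilon,t}=\sigma_\epsilon\circ\varphi_{-t}$ lives in the exotic class $S^E_\lambda$, each $X$-derivative of $\tilde\sigma$ costs $e^{\lambda|t|}$. Each further term in the stationary phase expansion gains one power of $h$ but also a second-order derivative of the amplitude, costing $e^{2\lambda|t|}$; with $|t|\le t_0|\log h|/(2\lambda)$ this gives an error of order $h\cdot h^{-t_0}=h^{1-t_0}$, matching the advertised remainder. Two technical points must be verified carefully: first, that the non-compactness of the $(u,Y)$-integration causes no trouble (the symbolic decay of $\tilde\sigma$ in $Y$ coming from the cut-off near $p=1/2$, combined with the compact $y$-support of $A$, handles this via the usual stationary phase with parameters); second, that one may indeed exchange the $\theta'$-integration over $\R^d$ with the periodic sum (justified because $\sigma_{\epsilon,t}$ is $\Lambda$-periodic in $\theta$ and the Schwartz kernel has the fast off-diagonal decay from Lemma \ref{prop:support_around_y-diagonal}).
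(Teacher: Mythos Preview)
Your approach is correct and essentially identical to the paper's: first eliminate the $(\theta',J)$-integration using the $\theta$-independence of the state (the paper does this via Poisson summation, arriving at exactly your $\tilde\sigma(v,Y)=\int_D\sigma_{\epsilon,t}(v,\theta,Y,0)\,d\theta$), then apply a one-dimensional stationary phase in the remaining $(y,y',Y)$-integral and invoke $\varphi_\tau(a,\theta,1/a,0)=(ae^\tau,\theta,e^{-\tau}/a,0)$ to turn $\sigma_{\epsilon,t}$ into $\sigma_{\epsilon,t-\tau}$. The only difference is the choice of stationary-phase variables: the paper uses the multiplicative coordinates $y=ae^\tau$, $y'=y(1+u)$, $Y=(1+v)/y$, in which the phase $\log(1+u)-u(1+v)$ and all amplitude derivatives are manifestly uniform in $\tau$, whereas in your additive choice $u=y-y'$ the $\partial_Y$-derivatives of $\tilde\sigma$ pick up powers of $v$ that must be seen to cancel against $1/v$-factors from $\partial_u A$ and the higher phase derivatives --- the rescaling $u\mapsto u/v$, $Y\mapsto vY$ recovers exactly the paper's coordinates and makes that cancellation transparent.
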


\begin{proof} This computation only takes place in cusp $Z_i$, and we forget the dependence in $i$ until the end of the proof of this lemma.

First, we can eliminate the integration in the $\theta'$ and $J$ variable in the LHS because of the following fact. When $\varsigma$ is tempered, $\Lambda$ periodic in the first variable,
\begin{align*}
\int_D  \left(\int_{\R^{2d}}  \varsigma\left(\frac{\theta + \theta'}{2}, hJ\right) e^{ i  \langle \theta - \theta',J\rangle}d \theta'  d J\right) d \theta  & = \int_D  \sum_{J\in \Lambda'} \hat{\varsigma}(J/2, hJ/2) e^{ i  \langle \theta, J\rangle}d \theta  \\
&=\int_D \varsigma(\theta, 0) d \theta 
\end{align*}
where $\hat{\varsigma}$ was the discrete Fourier transform in the first variable. Hence, the quantity in the LHS in the lemma is the integral over $\theta \in D$ of the following expression :
\begin{equation}\label{eq:formula_cusp_42}
h^{-1}\eta e^{-2\eta t} \int y^{s-1}y'^{\overline{s}-1} e^{ i  (y-y')Y/h} [\chi_{T+t}\chi](y) [\chi_{T+t}\chi](y') \sigma_{\epsilon,t}\left( \frac{y+y'}{2},\theta, Y, 0 \right) d y  d y'  d Y .
\end{equation}
We want to use the fact that if $\varsigma$ is a symbol in some $S^n(Z)$, not depending on $\theta$ nor on $J$, then the function $\tilde{\varsigma}(s,v)= \varsigma(e^s, e^{-s} v)$ is a symbol in the usual Kohn-Nirenberg sense, in $S^n(\R)$ --- notation of definition \ref{def:symbols_manifold}. Remark that the behavior is not so clear in the $\theta$ variable, for which periodicity and rescaling are not compatible.

We introduce the following rescalings : $y=ae^\tau$, $y' = y (1+u) $, $Y=(1+v)/y$. Up to a factor $h^{-1}\eta a^{2\eta} e^{-2\eta(t-\tau)} \chi_{T+t}\chi(ae^{\tau})$, the expression in \eqref{eq:formula_cusp_42} is the integral over $\tau \in \R$ of
\begin{equation*}
\int  (1+u)^{\eta - 1/2} e^{ i  (\log(1 + u) - u (1+v))/h}[\chi_{T+t}\chi](a e^\tau (1+u)) \sigma_{\epsilon,t}\left( a e^\tau(1+ \frac{u}{2}),\theta, \frac{1+v}{a e^\tau}, 0 \right) d u  d v  .
\end{equation*}
Remark that this integral vanishes when $\tau \notin [ \ln 2, T+ t + \ln 5]$, and write 
\begin{equation*}
\sigma_{\epsilon,t}\left( a e^\tau(1+ \frac{u}{2}),\theta, \frac{1+v}{a e^\tau}, 0 \right) = \sigma_{\epsilon,t-\tau}\left( a (1+ \frac{u}{2}),\theta, \frac{1}{a}(1+v), 0 \right).
\end{equation*}
Then, introduce a cutoff $\varrho(u)$, supported around $0$, and $1= \varrho + 1-\varrho$ to separate the integral into two parts (I) and (II). 

Let us examine first (II) which is not stationnary, and supported for $|u|> \delta$. We insert $1 = u^N /u^N$ and integrate by parts in $v$. We take the $L^1$ bound, considering that $\sigma_{\epsilon,t}$ is supported in $\{ p \in [1/2 - \epsilon, 1/2 + \epsilon]\}$, and using symbol estimates on $\sigma_{\epsilon,t}$. It gives
\begin{equation*}
\begin{split}
|\mathrm{(II)}| \leq C_N h^N e^{N\lambda(t-\tau)}\int  (1-\varrho(u))& (1+u)^{\eta- 1/2} \frac{(1+ u/2)^N}{u^N}  [\chi_{T+t}\chi](a e^\tau (1+u))  \\
		&\mathbb{1}\left[(1+u/2)|1+v| \leq \sqrt{1+2\epsilon}\right] d u  d v .
\end{split}
\end{equation*}
after some rescaling in the $v$ variable, and considering 
\begin{equation*}
[\chi_{T+t}\chi](a e^\tau (1+u)) \leq \mathbb{1}(-1 \leq u \leq +\infty),
\end{equation*}
this is bounded (uniformly in $\theta$ and $\tau$) by
\begin{equation*}
C h^{N(1-2\rho)} \int_{-1}^{+\infty}  d u \frac{(1-\varrho) (1+u)^{\eta- 1/2} \big(1+ u/2\big)^{N-1}}{u^N} = \mathcal{O}\left(h^{2-t_0}\right).
\end{equation*}

Part (I) of the integral supported around $u=0$ is an oscillatory integral that can directly be estimated. Indeed, on that domain, the phase function satisfies symbolic estimates and has only one critical point $(u,v)=(0,0)$, where it is $- uv + \mathcal{O}(u^3,v^3)$. Further consider that the function under the integral is smooth and uniformly compactly supported in $v$. When we differentiate it in $v$, we lose a $\mathcal{O}(e^{\lambda |t-\tau| })$ constant. When differentiating in $u$, either we differentiate $\sigma_{\epsilon, t-\tau}$, losing again a $\mathcal{O}(e^{\lambda |t-\tau| })$ constant, or we differentiate $\rho(u)(1+u)^{\eta -1/2} \chi_{T+t}\chi$. We chose --- recall \eqref{def:sharp_cutoffs} --- the cutoffs $\chi_{T+t}$ and $\chi$ exactly so that we lose only $\mathcal{O}(1)$ constants by doing so.

The basic stationnary phase theorem --- see theorem 7.7.5 in \cite{MR1996773} --- in the plane applies and we find
\begin{equation*}
\mathrm{(I)} = 2\pi h [\chi_{T+t}\chi](a e^\tau ) \sigma_{\epsilon,t-\tau}\left( a,\theta, \frac{1}{a}, 0 \right) + \mathcal{O}(h^{2-t_0}),
\end{equation*}
uniformly in variables $\theta$ and $\tau$.  \\

Recall we are to integrate (I)+(II) in $\theta$ and $\tau$ with a prefactor $h^{-1}\eta a^{2\eta} e^{-2\eta(t-\tau)} \chi_{T+t}\chi(ae^{\tau})$. But 
\begin{equation*}
\int  d \theta  d \tau \eta a^{2\eta} e^{-2\eta(t-\tau)} \chi_{T+t}\chi(ae^{\tau}) = \mathcal{O}(1).
\end{equation*}
and that estimate ends the proof.
\end{proof}

\subsubsection{Dynamical properties and conclusion}

Recall that whenever $\varsigma$ is a compactly supported continuous function on $T^\ast M$, in the coordinates of cusp $Z_i$,
\begin{equation*}
\mu^+_{i,\nu}(\varsigma) = 2\nu a^{2\nu}\int_{t\in \R} e^{-2\nu t} \varsigma \circ \varphi_{-t}\big(a, \theta, \frac{1}{a}, 0\big)  d \theta.
\end{equation*}
When $\eta(h)\to \nu>0$, $\eta$ remains bounded, and we can directly apply lemma \ref{lemma:approx_dynamical_incoming_measure} and equation \eqref{eq:conclusion_lagrangian_state_reduction}. Letting $h\to 0$, we find 
\begin{equation*}
\eta \langle \Op(\sigma) E_i(s), E_i(s)\rangle \to \pi \mu^+_{i,\nu}(\sigma_\epsilon).
\end{equation*}
Since $\mu^+_i$ is supported on $S^\ast M$
Actually, when $\eta \to 0$, slowly enough, lemma \ref{lemma:approx_dynamical_incoming_measure} and equation \ref{eq:conclusion_lagrangian_state_reduction} also imply that 
\begin{equation*}
\begin{split}
|\eta \langle \Op(\sigma) E_i(s), E_i(s)\rangle - \pi \mu^+_{i,\eta(h)}(\sigma)| &= \mathcal{O}(h^{1-t_0}) + \mathcal{O}(\|\sigma\|_{L^\infty} e^{-2\eta(h) t(h)}) \\
	& = \mathcal{O}(|\log h|^{-t_0}).
\end{split}
\end{equation*}

The proof of theorem \ref{theorem:Extension_Semyon} will therefore be complete if we can prove 
\begin{lemma}\label{lemma:convergence_measure}
Assume $M$ has strictly negative curvature. For all $\sigma\in C^0_c(T^\ast M)$, for all $i=1\dots k$, as $\nu \to 0^+$,
\begin{equation*}
\mu^\pm_{i,\nu}(\sigma) \to \int \sigma  d \mathscr{L}_1
\end{equation*}
where $\mathscr{L}_1$ is the normalized Liouville measure on the unit cotangent bundle of $M$.
\end{lemma}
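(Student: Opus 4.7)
The plan is to follow Babillot's strategy~\cite{MR1910932}: show that $\{\mu^\pm_{i,\nu}\}_{\nu>0}$ is tight and asymptotically invariant under the geodesic flow as $\nu\to 0^+$, then use the local product structure of negatively curved dynamics to identify any weak-$\ast$ limit with a multiple of the Liouville measure. Since $p(a,\theta,\pm 1/a,0)=1/2$ and $p$ is preserved by $\varphi_t$, every $\mu^\pm_{i,\nu}$ sits on $S^\ast M$. For tightness on a compact $K\subset T^\ast M$: the contribution from $t<0$ vanishes once the geodesic $\varphi_{\mp t}(\omega_\theta)$ escapes above $K$ into the cusp, while the contribution from $t>0$ is bounded by $2\nu a^{2\nu}\int_0^\infty e^{-2\nu t}|\{\theta:\varphi_{\mp t}(\omega_\theta)\in K\}|\,dt$, which stays bounded as $\nu\to 0$ thanks to the normalization $2\nu a^{2\nu}$ and to the controlled density of recurrence times of the geodesic flow to $K$.

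Asymptotic $\varphi_s$-invariance follows from a change of variable $t\mapsto t-s$ in the defining integral: one gets $\varphi_s^\ast\mu^\pm_{i,\nu}=e^{\mp 2\nu s}\mu^\pm_{i,\nu}$, so $\varphi_s^\ast\mu^\pm_{i,\nu}-\mu^\pm_{i,\nu}\to 0$ weakly for each fixed $s$, and any weak-$\ast$ limit $\mu$ is invariant under the geodesic flow. The heart of the argument is then to identify $\mu$ with Liouville. The transversal $H_a:=\{(a,\theta,\pm 1/a,0):\theta\in\mathbb{T}_\Lambda\}$ is a piece of a (weak) stable horocycle in the cusp, and backward propagation by $\varphi_{\mp t}$ expands it along the unstable foliation. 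In a flow box the negative curvature provides a product decomposition $W^u\times W^s\times\R$ in which $\mu^\pm_{i,\nu}$ disintegrates as $d\theta$ along the stable direction, $e^{\mp 2\nu t}\,dt$ along the flow, and an expanding point mass in the unstable direction. As $\nu\to 0$ the exponential expansion of unstable leaves under $\varphi_{\mp t}$ exactly compensates the $e^{\mp 2\nu t}$ weight, so the conditional measures of $\mu$ on unstable leaves become absolutely continuous with the densities prescribed by Liouville. Babillot's rigidity theorem then forces $\mu=c\,\mathscr{L}_1$ for some $c>0$; the constant $c=1$ is determined by testing against a cusp-supported $\sigma=\sigma(y,Y)$, for which both $\mu^\pm_{i,\nu}(\sigma)$ and $\int\sigma\,d\mathscr{L}_1$ can be evaluated in closed form via the substitution $y=ae^\tau$.

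The main obstacle is this last identification step. The local product decomposition has to be made precise enough to compare conditional densities, which relies on strict negative curvature (uniform hyperbolicity) and on handling the fact that the stable and unstable foliations are only H\"older regular; this is exactly where Babillot's argument, using mixing of the geodesic flow to transport conditional densities across unstable leaves, is essential. Tightness and geodesic-flow invariance alone would admit many invariant measures on $S^\ast M$, so the rigidity to Liouville is a genuinely dynamical statement and cannot be replaced by a simpler stationary-phase or direct geometric calculation.
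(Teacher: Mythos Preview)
Your outline follows the right philosophy---Babillot's equidistribution argument---but the implementation you sketch diverges from the paper's and has a genuine gap at the identification step.

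The paper explicitly warns that one \emph{cannot} simply invoke Babillot's result (theorem~3 in \cite{MR1910932}): that theorem requires the reference measure to be a Gibbs state, and it is an open question whether the Liouville measure is Gibbs on a cusp manifold. So the sentence ``Babillot's rigidity theorem then forces $\mu=c\,\mathscr{L}_1$'' is precisely where a direct citation fails. Your surrounding justification---that the unstable conditionals become absolutely continuous because ``exponential expansion exactly compensates the $e^{\mp 2\nu t}$ weight''---is a heuristic, not a proof; the foliations are only H\"older, and making this quantitative is the whole content of the argument. Your tightness paragraph is also incomplete: boundedness of $\mu^\pm_{i,\nu}(K)$ for fixed compact $K$ does not prevent mass from escaping to infinity, and ``controlled density of recurrence times'' is not an estimate. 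Finally, a small geometric slip: the incoming horosphere $H_{i,a}$ is a strong \emph{unstable} leaf for the relevant flow direction, not a weak stable one.

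The paper avoids both the tightness and the rigidity issues by \emph{not} passing to weak limits at all. It thickens $H_{i,a}$ along the local \emph{weak-stable} foliation into a set $\Omega_{i,a,C\delta}$, and uses absolute continuity of that foliation (theorem~7.6 of \cite{2012arXiv1211.6242P}) to write $d\mathscr{L}_1=\rho\, d\mathrm{vol}_{W^{s}}\, d\mathrm{vol}_{H_{i,a}}$ there. Setting $g=1/(\rho\cdot\mathrm{vol}(B))$ on $\Omega_{i,a,C\delta}$ gives a nonnegative $g\in L^1(\mathscr{L}_1)$ with $\|g\|_{L^1}=1$ such that, by uniform continuity of $\sigma$ and forward contraction along stable leaves,
\[
\Big|\int_{H_{i,a}}\sigma\circ\varphi_t\,d\theta-\int_{S^\ast M}(\sigma\circ\varphi_t)\, g\,d\mathscr{L}_1\Big|\leq\varepsilon\quad\text{for all }t\geq 0.
\]
Plugging this into the definition of $\mu^-_{i,\nu}$ and rewriting $2\nu\int_0^\infty e^{-2\nu t}(\sigma\circ\varphi_t)\,dt$ as an average of Birkhoff means, ergodicity of the geodesic flow for $\mathscr{L}_1$ (established via the Hopf argument, again using absolute continuity) together with dominated convergence gives the limit $\|g\|_{L^1}\,\mathscr{L}_1(\sigma)=\mathscr{L}_1(\sigma)$ directly. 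No subsequences, no rigidity classification, and the normalization $c=1$ falls out automatically from $\|g\|_{L^1}=1$ rather than from a separate cusp computation.
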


It is as far as we know an open question as to whether the Liouville measure is a Gibbs measure in such a cusp-manifold --- that is to say, whether a Ruelle inequality holds. If it were, we could apply directly theorem 3 in \cite{MR1910932}. However, mimicking the proof therein and using the classical Hopf argument, we are able to conclude. Observe that replacing the hypothesis of negative curvature by \emph{ergodic}, or even \emph{mixing}, we are not able to prove that conclusion still holds: we really use the stable and unstable foliations, and the fact that they are absolutely continuous. 

In this part of the proof, it is easier to consider only $\mu^-$, that is supported on incoming horospheres.

\begin{proof} 
From now on, we work on the unit cotangent sphere since both $\mu^-_{\nu, i}$ and $\mathscr{L}_1$ are supported on $S^\ast M$.

Take $\varepsilon >0$. Since $\sigma$ is compactly supported, we can find a $\delta >0$ such that $|\sigma(\xi)-\sigma(\xi')|< \varepsilon$ whenever $|\xi-\xi'| < \delta$. The measures $\mu^-_{i,\nu}$ are obtained by propagating an incoming horocycle. Following the idea of proof in \cite{MR1910932}, we want to thicken the horocycle. Denote by $H_{i,a}$ the incoming horosphere at height $a$ in cusp $Z_i$, and consider the set
\begin{equation*}
\Omega_{i,a,C} := \bigcup_{\xi \in H_{i,a}} B(\xi, C, W^{s}(\xi)),
\end{equation*}
where $B(\xi, C, W^{s}(\xi))$ is a ball of radius $C>0$ in the local weak stable leaf of $\xi$, centered at 	$\xi$. For $C>0$ small enough, in $\Omega_{i,a,C}$, $H_{i,a}$ is a local section of the weak-stable foliation of $T^{\ast,1} M$, with projection $\pi^{su}$. From the contraction properties on the weak-stable foliation, for some constant $C>0$, on $\Omega_{i,a,C\delta}$, $|\sigma\circ \varphi_t - \sigma\circ \varphi_t\circ \pi| \leq \varepsilon$. 

From theorem 7.6 in \cite{2012arXiv1211.6242P}, there is a locally bounded measurable density $\rho$ such that in $\Omega_{i,a,C\delta}$,
\begin{equation*}
 d \mathscr{L}_1 = \rho(\xi)  d \mathrm{vol}_{W^{s}(\pi^{su} \xi)}  d \mathrm{vol}_{H_{i,a}}
\end{equation*} 
The measure on $H_{i,a}$ being $d\theta$ of mass $1$. We let $g$ vanish out of $\Omega_{i,a,C\delta}$ and on $\Omega_{i,a,C\delta}$,
\begin{equation*}
g(\xi) := \frac{1}{\rho(\xi) \mathrm{vol}(B(\pi^{su}\xi, C, W^{s}(\pi^{su}\xi)))}.
\end{equation*}
Then, $g$ is in $L^1(S^\ast M)$, and $\| g \|_{L^1(S^\ast M)} = \mathrm{vol}(H_{i,a})= 1$. Hence, 
\begin{equation*}
\left| \int_{H_{i,a}} \sigma\circ \varphi_t  d \theta - \int_{\Omega_{i,a,C\delta}} \sigma\circ \varphi_t \times g d \mathscr{L}_1\right|.
\end{equation*}

Consider that in the definition of $\mu^+_{\nu,i}$, since $\sigma$ is compactly supported, we can integrate in $t$ for only $t\in [-T, + \infty[$. Additionally, the prefactor $a^{2\eta}$ tends to $1$, and the part $t\in [-T, 0]$ will not contribute, so we write
\begin{equation*}
\mu^-_{\nu,i} (\sigma) = \mathcal{O}(\nu T  \| \sigma \|_{\infty}) + 2\nu \int_0^\infty  d t e^{-2\nu t} \int \sigma \circ \varphi_t  \times g  d \mathscr{L}_1  + \mathcal{O}(\varepsilon)
\end{equation*}
Using the Hopf argument (as in \cite{MR2261076}), and theorem 7.6 from \cite{2012arXiv1211.6242P} again, one can see that the geodesic flow is mixing for the Liouville measure. Actually, it suffices for it to be \emph{ergodic}. Indeed, 
\begin{equation*}
2\nu \int_0^\infty  d t e^{-2\nu t} \int \sigma \circ \varphi_t \times g  d \mathscr{L}_1 = \int_{\R^+} t e^{-t} \int_{T^{\ast,1}M} g(\xi) F\left(\frac{t}{2\nu},\xi\right) d t  d \mathscr{L}_1(\xi),
\end{equation*}
where $F(t,\xi)$ is the Birkhoff average of $\sigma$ for a time $t$ along the trajectory of $\xi$. Since $g$ is $L^1$, and $\sigma$ is bounded, by dominated convergence and ergodicity, the limit of this when $\nu \to 0$ is $\|g\|_{L^1}\mathscr{L}_1(\sigma)$. For all $\varepsilon>0$, we find for any limit value $\overline{\sigma}$ of $\mu^-_{i,\nu}(\sigma)$,
\begin{equation*}
\left| \overline{\sigma} - \mathscr{L}(\sigma) \right| = \mathcal{O}(\varepsilon).
\end{equation*}
letting $\varepsilon\to 0$ yields the desired result.
\end{proof}

\appendix

\section{Functionnal spaces in a cusp}
\label{appendix:functionnal_spaces_cusp}

First, let us recall some definitions on covariant derivatives. If $S$ is a tensor on a riemannian manifold, one defines its covariant derivative in the following way:
\begin{equation*}
(\nabla_X S)(Y_1, \dots, Y_n) := X\left( S(Y_1, \dots, Y_n) \right) - \sum_i S(Y_1, \dots, \nabla_X Y_i, \dots, Y_n)
\end{equation*}
In particular, when $f$ is a function on a riemannian manifold $N$, one defines a family of tensors $\nabla^n f$ in the following way. 
\begin{equation*}
\nabla f : X \mapsto X(f) \quad \text{ and } \quad \nabla^{n+1}_{X_0, \dots, X_n}f := (\nabla_{X_0} \nabla^{n})_{X_1, \dots X_n} f.
\end{equation*}
We also define it for vectors --- which are $(0,1)$ tensors:
\begin{equation*}
\nabla Z : X \mapsto \nabla_X Z \quad \text{ and } \quad \nabla^{n+1}_{X_0, \dots, X_n}Z := (\nabla_{X_0} \nabla^{n})_{X_1, \dots X_n} Z.
\end{equation*}
This enables us to define, for $x\in N$
\begin{equation*}
\| \nabla^n f \|(x) = \sup_{X_1, \dots, X_n \in T_x N} \frac{ |\nabla^n f(X_1, \dots, X_n)|}{\| X_1\| \dots \| X_n \|}
\end{equation*} 
Then, the space $\mathscr{C}^n(N)$ is the set of functions on $N$ that are $C^n$, and such that 
\begin{equation*}
\| f \|_{\mathscr{C}^n(N)} : = \sum_{k=0}^n \sup_{x\in N} \| \nabla^k f \|(x)  < \infty.
\end{equation*}

Now, we turn to Sobolev spaces. When $N$ is complete, $L^2(N)$ is a Hilbert space. For $n\geq 0$ an integer, one defines the norm
\begin{equation*}
\| f \|^2_{H^n(Z)} := \sum_{k\leq n}  \left\| \|\nabla^k f\|(x) \right\|^2_{L^2(dx)}
\end{equation*}
The Sobolev space $H^n(N)$ of order $n$ is the completion of $C^\infty(N)$ for this norm. If $N$ has no boundary, then $H^{-n}(N)$ is defined as the dual of $H^n(N)$.

Using the Lax-Milgram theorem, exactly as for the Laplacian on $\R^n$, one proves that for any $\epsilon>0$, $-\Delta+ \epsilon$ is invertible on $H^1(N)$ with values in $H^{-1}(N)$. Since it is also positive, one can use the spectral theorem to define $(-\Delta + 1)^s$ for any $s\in \R$. One observes that $\| . \|_{H^1(N)}$ and $\| (-\Delta + 1)^{1/2} . \|$ are equivalent norms on $H^1(N)$.

The cusp $Z$ is complete, so the above apply. Now, one can compute the following :
\begin{equation}
\begin{split}\label{eq:Christoffel_symbols}
\nabla_{X_y} X_y &= 0\\
\nabla_{X_y} X_{\theta_i} &= 0\\
\nabla_{X_{\theta_i}} X_y &= - X_{\theta_i}\\
\nabla_{X_{\theta_i}} X_{\theta_j} &= \delta_{ij} X_y 
\end{split}
\end{equation}
From this and the definition of $\nabla^n$, if $\alpha$ is a space-index of length $n$, we find
\begin{equation*}
\nabla^n f(X_\alpha) = X_\alpha f + \sum_{\beta} \pm X_\beta f
\end{equation*}
where $\beta$ are other space-indices, of length $< n$. Whence by induction on $n\geq 0$ we find
\begin{equation}\label{eq:equivalent_norms_C^n}
\| f \|_{\mathscr{C}^n(Z)} \text{ is equivalent to } \sum_{|\alpha| \leq n} \| X_\alpha f \|_{L^\infty(Z)}.
\end{equation}
and
\begin{equation}\label{eq:equivalent_norms_Sobolev_1}
\| f \|_{H^n(Z)} \text{ is equivalent to } \sum_{|\alpha| \leq n} \| X_\alpha f \|_{L^2(Z)}.
\end{equation}

Now, we define, for $s$ a real number
\begin{equation*}
\| f \|_s := \| (-\Delta + 1)^s f \|_{L^2}
\end{equation*}
We want to show that the completion of $C^\infty(Z)$ \emph{is} the sobolev space $H^s( Z)$ for integer $s$, and that then, $\|. \|_s$ is equivalent to $\| . \|_{H^s(Z)}$. Such a result is deduced of an elliptic estimate similar to that in pp 358 in \cite{MR2744150}. Actually, the proof therein adapts to a cusp if one defines the slope operators $D_{j,h}$ in the following way
\begin{equation*}
D_{j,h}f(x) = \frac{1}{h}\left( f(x+ h X_j)-f(x) \right), \quad j = y, \theta_1, \dots, \theta_d.
\end{equation*}

Then, using $P=-h^2 \Delta/2$, we also define the semi-classical Sobolev norms :
\begin{equation*}
\| f \|_{s,h} := \| (P+1)^{s/2}f\|_{L^2(Z)}.
\end{equation*}
One gets for some constant $C>0$
\begin{equation}\label{eq:equivalent-norms}
\frac{1}{C}h^{s^+}\| f \|_s \leq \|f \|_{s,h} \leq C h^{-s^-}\| f \|_s.
\end{equation}
where  $s^+$ and $s^-$ are the positive and negative part of $s$.

To finish this section, we define the non-integer Sobolev spaces using complex interpolation --- as in pp 321 from \cite{MR2744150}.

\section{Estimating the derivatives of a flow on a Riemannian manifold}

The following proposition should be classical, but for lack of a reference, we enclose a proof.
\begin{prop}\label{prop:estimating_derivatives_propagation_flow}
Let $\varphi^t$ be a flow in a manifold $N$, such that all the covariant derivatives of the vector field $V$ of the flow and of the curvature tensor of $N$ are bounded. Assume also that the maximal Lyapunov exponent $\lambda_0$ of $\varphi^t$ --- as defined in \ref{def:max_lyapunov} --- is \emph{finite}. Then for all $\lambda > \lambda_0$, there are constants $C_n >0$, such that for $f \in \mathscr{C}^n (N)$, for $t \in \R$,  
\begin{equation*}
\| f \circ \varphi_t \|_{\mathscr{C}^n(N)} \leq C_n e^{n \lambda |t|} \|f\|_{\mathscr{C}^n(N)}
\end{equation*}
\end{prop}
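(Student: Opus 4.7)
The plan is to reduce, via a chain rule for covariant derivatives, the estimate on $f\circ\varphi_t$ to an estimate on the covariant derivatives of the flow map $\varphi_t$ itself, and then to bound the latter by induction using the variational equation along the trajectory.

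First I would establish a Faà di Bruno formula for covariant derivatives: by induction on $n$ one checks that $\nabla^n(f\circ\varphi_t)$ is a finite linear combination (with universal combinatorial coefficients) of terms
\begin{equation*}
(\nabla^k f)_{\varphi_t(x)}\bigl(\nabla^{m_1}\varphi_t,\ldots,\nabla^{m_k}\varphi_t\bigr),\qquad m_1+\cdots+m_k=n,\ k\le n,
\end{equation*}
the inductive step producing only extra factors of $\nabla V$ and of the curvature tensor $R$ of $N$ when one commutes covariant derivatives, both of which are bounded by hypothesis. Consequently, if one can show that for each $m\ge 1$ there is a constant $C_m>0$ with
\begin{equation*}
\sup_{x\in N}\|\nabla^m\varphi_t\|(x)\le C_m e^{m\lambda|t|},
\end{equation*}
then, since $m_1+\cdots+m_k=n$ in every term of the Faà di Bruno expansion, one gets $\|\nabla^n(f\circ\varphi_t)\|(x)\le C_n' \|f\|_{\mathscr{C}^n(N)}\,e^{n\lambda|t|}$, which is the desired bound.

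Next I would prove the bound on $\nabla^m\varphi_t$ by induction on $m$. For $m=1$, the assumption on the maximal Lyapunov exponent gives directly $\|d\varphi_t\|\le C_1 e^{\lambda|t|}$, uniformly in the base point (the $\sup$ in Definition \ref{def:max_lyapunov} ensures uniformity). For the inductive step, I would differentiate the relation $\tfrac{D}{dt}\,d\varphi_t(X_0)=(\nabla V)(d\varphi_t(X_0))$ covariantly in the initial point, carefully commuting $\tfrac{D}{dt}$ with the covariant derivatives at the price of curvature terms of the form $R(d\varphi_t(\cdot),V(\varphi_t))(\cdot)$. This yields an ODE along the trajectory of the form
\begin{equation*}
\frac{D}{dt}\,\nabla^m\varphi_t \;=\; (\nabla V)_{\varphi_t}\!\circ\nabla^m\varphi_t \;+\; F_m(t),
\end{equation*}
where $F_m(t)$ is a polynomial expression in $\nabla^{k_1}\varphi_t\otimes\cdots\otimes\nabla^{k_j}\varphi_t$ with $k_1+\cdots+k_j=m$ and each $k_i<m$, contracted with bounded tensors built from $\nabla^{a}V$ and $\nabla^{b}R$ for $a,b\le m$. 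By the induction hypothesis, $\|F_m(t)\|\le C\,e^{m\lambda|t|}$.

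I would then finish by variation of constants: the fundamental solution $M(t)$ of the linearized equation $\dot K=(\nabla V)_{\varphi_t} K$ is nothing but $d\varphi_t$, and by the cocycle identity $M(t)M(s)^{-1}=d\varphi_{t-s}|_{\varphi_s(x)}$, so the uniform Lyapunov bound gives $\|M(t)M(s)^{-1}\|\le C e^{\lambda|t-s|}$ for both signs of $t-s$. Applied to the ODE above, this yields
\begin{equation*}
\|\nabla^m\varphi_t\|\;\le\;\Bigl|\int_0^t C e^{\lambda|t-s|}\,C e^{m\lambda|s|}\,ds\Bigr|\;\le\;C_m e^{m\lambda|t|},
\end{equation*}
closing the induction. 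The main obstacle is the second step: keeping track of the exact form of $F_m(t)$ and verifying that every term that arises from repeatedly commuting $\tfrac{D}{dt}$ with covariant derivatives in the base point respects the degree count $k_1+\cdots+k_j=m$ with $k_i<m$; the curvature commutators are precisely what could spoil this if mishandled, but they only introduce extra factors of $d\varphi_t$ together with bounded tensors, so the degree bookkeeping goes through.
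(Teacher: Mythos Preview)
Your approach is correct and is the classical route: estimate higher covariant Hessians $\nabla^m\varphi_t$ of the flow map by an inductive variational argument, then assemble $\nabla^n(f\circ\varphi_t)$ via Fa\`a di Bruno. Two small comments: the Fa\`a di Bruno step itself produces no $\nabla V$ or curvature terms --- with $\nabla^m\varphi_t$ defined through the pullback connection on $\varphi_t^*TN$, the chain rule is clean; curvature and $\nabla V$ enter only in the second step, when you commute $\tfrac{D}{dt}$ with spatial covariant derivatives. Apart from this slip, the degree bookkeeping $\sum k_i=m$ with $k_i<m$ and the variation-of-constants closure are fine.

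The paper takes a different, dual organization. It explicitly avoids introducing or estimating the higher derivatives $\nabla^m\varphi_t$. Instead it works with the pulled-back operators
\[
W_t^n(X_1,\dots,X_n)f \;=\; \big[(\varphi^t)^*\nabla^n_{\varphi^t_*X_1,\dots,\varphi^t_*X_n}\big]f,
\]
and derives a recursion for $W_t^n$ in terms of $W_t^{n-1}$ and the defect tensor $L_t(X,Y)=\nabla_XY-(\varphi^t)^*\nabla_{\varphi^t_*X}\varphi^t_*Y$, for which it proves the integral identity $L_t(X,Y)=-\int_0^t(\varphi^s)^*\big(\nabla^2_{\varphi^s_*X,\varphi^s_*Y}V+R(V,\varphi^s_*X)\varphi^s_*Y\big)ds$. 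This identity is essentially your $m=2$ ODE integrated once (indeed $L_t=-(\varphi_t)_*^{-1}\nabla^2\varphi_t$), but the paper then iterates it on the level of tensors rather than on $\nabla^m\varphi_t$: it builds a recursive class $\mathscr{T}$ of integral tensors of the form $\int_s^t(\varphi^u)^*R(\varphi^u_*T_1,\dots,\varphi^u_*T_k)\,du$ and proves two lemmas --- that $W_t^nf$ expands as $\sum\nabla^kf(T_1,\dots,T_k)$ with $T_i\in\mathscr{T}$, and that each $T\in\mathscr{T}$ of order $n$ satisfies $\|\varphi^s_*T(s,t)((\varphi^t)^*X_1,\dots)\|\le C e^{n\lambda(t-s)}$. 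What this buys is that one never manipulates higher covariant derivatives of a \emph{map}: only first derivatives $\varphi^t_*$ of the flow appear, contracted with bounded tensors built from $\nabla^kV$ and $\nabla^kR$. Your approach buys a more familiar ODE/Gronwall structure at the price of setting up and tracking the objects $\nabla^m\varphi_t$ (sections of $\bigotimes^m T^*N\otimes\varphi_t^*TN$) carefully. Both are coordinate-free and both yield the same exponent $n\lambda$.
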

The proof is inspired by \cite{MR3215926} ($N$ is a convex co-compact hyperbolic surface), which itself comes from \cite{MR1882134} ($N= \R^n$). In the usual proofs of this type of result, at some point, one uses coordinates to transport the problem to $\R^n$. When $N$ is compact, this is reasonnable because all metrics on $N$ are equivalent. When $N$ is non compact, it is probably possible to take a similar approach. However, one would have be careful and take coordinate charts with derivatives nicely bounded. We chose to avoid taking coordinates altogether, and give an intrinsic formulation of the proof, hence the appearance of many tensors.

The main idea of the proof is to avoid estimating higher derivatives of the flow, and replace them by higher derivatives of the vector field of the flow.
\begin{proof}
We want to compute 
\begin{equation*}
\nabla_{X_1, \dots X_n} (f\circ \varphi^{t}).
\end{equation*}
We are going to compare this with
\begin{equation*}
\nabla_{\varphi^t_\ast X_1, \dots \varphi^t_\ast X_n} f.
\end{equation*}
In the first expression, there are \emph{a priori}, higher derivatives of the flow, while the second one only contains first order derivatives that are much easier to estimate. Let $z\in N$, and $X_1, \dots, X_n \in T_z N$. Let
\begin{equation*}
W_t^n(X_1, \dots X_n)f := \left[(\varphi^t)^\ast \left(\nabla_{\varphi^t_\ast X_1, \dots, \varphi^t_\ast X_n} \right)\right]f
\end{equation*}
That is :
\begin{equation*}
\left[W_t^n(X_1, \dots, X_n) f\right]\circ\varphi_{-t} = \left(\nabla_{\varphi^t_\ast X_1, \dots, \varphi^t_\ast X_n} \right)(f\circ \varphi_{-t}).
\end{equation*}
From the definition, we see that $W_t^n f$ is a tensor. We observe that
\begin{equation}\label{eq:covariant_derivative_twisted_induction}
W_t^n = \nabla W_t^{n-1} + \sum_{i=2,\dots n} W_t^{n-1}(X_2, \dots, \nabla_{X_1}X_i - (\varphi^t)^\ast (\nabla_{\varphi^t_\ast X_1} \varphi^t_\ast X_i), \dots, X_n).
\end{equation}
One can compute $(\varphi^t)^\ast (\nabla_{\varphi^t_\ast X_1} \varphi^t_\ast X_i)$. Indeed, consider the fact
\begin{equation*}
\partial_t (\varphi^t)^\ast X(t) = (\varphi^t)^\ast[ V, X(t)] + (\varphi^t)^\ast \partial_t X
\end{equation*}
We deduce that
\begin{align*}
\partial_t (\varphi^t)^\ast (\nabla_{\varphi^t_\ast X} \varphi^t_\ast Y) &=  (\varphi^t)^\ast\left( [ V, \nabla_{\varphi^t_\ast X} \varphi^t_\ast Y] - \nabla_{[V,\varphi^t_\ast X]} \varphi^t_\ast Y - \nabla_{\varphi^t_\ast X} [V,\varphi^t_\ast  Y]\right).\\
\intertext{That is}
\partial_t (\varphi^t)^\ast (\nabla_{\varphi^t_\ast X} \varphi^t_\ast Y) &=  (\varphi^t)^\ast Z(\varphi^t_\ast X,\varphi^t_\ast Y) \\
\intertext{with}
Z(\varphi^t_\ast X,\varphi^t_\ast Y) & = \nabla^2_{\varphi^t_\ast X \varphi^t_\ast Y}V + R_\nabla(V, \varphi^t_\ast X)\varphi^t_\ast Y,
\end{align*}
where $R_\nabla$ is the curvature tensor. So,
\begin{equation}\label{eq:covariant_vector_vector}
\nabla_X Y - (\varphi^t)^\ast (\nabla_{\varphi^t_\ast X} \varphi^t_\ast Y) = - \int_0^t (\varphi^s)^\ast Z(\varphi^s_\ast X, \varphi^s_\ast Y) ds
\end{equation}
This equation we found in \cite{MR3215926}, and the rest of the proof is devoted to proving similar formulae for higher order derivatives. Let us call the tensor in the RHS $L_t(X,Y)$. We can already compute explicitly
\begin{equation*}
W^1_t f = X_1 f \quad W^2_t f = \nabla_{X_1, X_2} f + L_t(X_1, X_2)f.
\end{equation*}
Now, we introduce a class of vector-valued tensors $\mathscr{T}$. Elements of $\mathscr{T}$ depend on two time-parameters $s, t$. First, the identity is in $\mathscr{T}$. Second, if $T_1(s,t), \dots, T_k(s,t)$ are in $\mathscr{T}$ with $k\geq 2$, and if $R$ is a smooth (vector-valued) $k$-tensor with all its covariant derivatives bounded, then
\begin{equation}\label{eq:form_tensors_recurrent}
T^R_{T_1, \dots, T_k}(s,t) := \int_s^t (\varphi^u)^\ast R(\varphi^u_\ast T_1(u, t), \dots, \varphi^u_\ast T_k(u, t))du
\end{equation}
is also in $\mathscr{T}$. We require that $\mathscr{T}$ is the vector space generated by the above tensors. For example, $L_t$ is $T^{-Z}_{Id,Id}(0,t)$; we denote $L(s,t)=T^{-Z}_{Id,Id}(s,t)$. Then
\begin{lemma}
$W^n_t f$ can be written as a sum of terms
\begin{equation}\label{eq:terms_W^n_t}
\nabla^k f(T_1(0,t), \dots, T_k(0,t))
\end{equation}
where the $T_i$'s are in $\mathscr{T}$ of the correct order.
\end{lemma}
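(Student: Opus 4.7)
The plan is to induct on $n$. The base case $n=1$ has $W_t^1 f(X_1) = X_1 f = \nabla f(Id(X_1))$ with $Id \in \mathscr{T}$. For the inductive step I apply \eqref{eq:covariant_derivative_twisted_induction}, which in view of \eqref{eq:covariant_vector_vector} rewrites as
\begin{equation*}
W_t^n f(X_1,\ldots,X_n) = \nabla_{X_1}\bigl(W_t^{n-1} f(X_2,\ldots,X_n)\bigr) + \sum_{i=2}^n W_t^{n-1} f\bigl(X_2,\ldots,L_t(X_1,X_i),\ldots,X_n\bigr),
\end{equation*}
with $L_t = T^{-Z}_{Id,Id}(0,t)\in\mathscr{T}$. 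The two families of resulting terms will be treated by two structural closure properties of $\mathscr{T}$.

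The substitution terms rest on a first closure property: plugging a tensor $T'\in\mathscr{T}$ of arity $\ell'$ into one argument slot of a tensor $T\in\mathscr{T}$ of arity $\ell$ yields an element of $\mathscr{T}$ of arity $\ell+\ell'-1$. This is a structural induction on the recursive definition of $\mathscr{T}$: either $T = Id$, in which case the substitution simply returns $T'$, or $T = T^R_{T_1,\ldots,T_k}(s,t)$ and the substituted argument descends into some $T_j$; by the inner induction the modified $T_j$ lies in $\mathscr{T}$, so the whole integrand retains the form \eqref{eq:form_tensors_recurrent}. Applied termwise to the inductive expression for $W_t^{n-1} f$, each substitution term becomes a sum of $\nabla^k f(\tilde T_1,\ldots,\tilde T_k)$ with $\tilde T_j\in\mathscr{T}$ of total arity $n$.

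The derivative term unfolds via Leibniz into
\begin{equation*}
\nabla_{X_1}\bigl[\nabla^k f(T_1,\ldots,T_k)\bigr] = \nabla^{k+1} f(X_1,T_1,\ldots,T_k) + \sum_j \nabla^k f\bigl(T_1,\ldots,\nabla_{X_1} T_j,\ldots,T_k\bigr).
\end{equation*}
The first piece is already of the required form, with a fresh $Id$-slot absorbing $X_1$. The remaining pieces require a second closure property of $\mathscr{T}$, and this is the main technical obstacle: for every $T\in\mathscr{T}$ of arity $\ell$, the curried tensor $(Y,\cdot)\mapsto(\nabla_Y T)(\cdot)$ of arity $\ell+1$ again lies in $\mathscr{T}$. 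I would prove this by induction on the build-up of $T$. The base $T = Id$ gives $\nabla Id = 0$, trivially in $\mathscr{T}$. For $T = T^R_{T_1,\ldots,T_k}(s,t)$, I differentiate under the integral and appeal to the higher-tensor analogue of \eqref{eq:covariant_vector_vector}: the identity $\partial_u (\varphi^u)^\ast = (\varphi^u)^\ast \mathcal{L}_V$ yields, for any smooth tensor field $\Sigma$, a commutation formula
\begin{equation*}
\nabla_Y\bigl[(\varphi^u)^\ast\Sigma\bigr] = (\varphi^u)^\ast \nabla_{\varphi^u_\ast Y}\Sigma + \int_0^u (\varphi^\tau)^\ast Z_\Sigma\bigl(\varphi^\tau_\ast Y,\,\cdot\,\bigr)\,d\tau,
\end{equation*}
where $Z_\Sigma$ is a universal polynomial expression in $\nabla V$, the curvature tensor and their covariant derivatives, all bounded by hypothesis; hence the correction integral is itself of the form \eqref{eq:form_tensors_recurrent}. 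Applying this with $\Sigma = R(\varphi^u_\ast T_1,\ldots,\varphi^u_\ast T_k)$ and distributing $\nabla_{\varphi^u_\ast Y}$ by Leibniz, contributions that fall on $R$ use the boundedness of its covariant derivatives to produce a fresh tensor node in the form \eqref{eq:form_tensors_recurrent}, while contributions that fall on $\varphi^u_\ast T_j$ are handled by the inner inductive hypothesis on $T_j$. Integrating over $u$ presents $\nabla_Y T$ as a finite sum of elements of $\mathscr{T}$, which closes the sublemma and with it the induction.
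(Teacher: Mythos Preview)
Your high-level plan (induct on $n$ via \eqref{eq:covariant_derivative_twisted_induction}, expand by Leibniz, then reduce to closure properties of $\mathscr{T}$) matches the paper's. The gap is that neither of your two closure properties actually holds for $\mathscr{T}$, because of the two-parameter $(s,t)$-structure built into the class.

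For substitution: when you plug $T'(s,t)$ into an argument slot of $T(s,t)=T^{R}_{T_1,\dots,T_k}(s,t)$, that argument descends into some $T_j$, but $T_j$ sits inside the integral at parameter $(u,t)$. So the integrand becomes $T_j(u,t)(\dots,T'(s,t)(\dots),\dots)$, with $T'$ frozen at $s$; this is not of the form $\tilde T_j(u,t)$ for any $\tilde T_j\in\mathscr{T}$. Concretely, $L(s,t)(X_1,L(s,t)(X_2,X_3))$ is not $T^{-Z}_{Id,L}(s,t)$, since the latter has the inner $L$ at $(u,t)$.

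For differentiation: when you push $\nabla_X$ through $(\varphi^u)^\ast$ using \eqref{eq:covariant_vector_vector}, you pick up $\int_0^u$. After Fubini over $\int_s^t\int_0^u$, a boundary piece $\int_0^s(\varphi^w)^\ast Z(\varphi^w_\ast X,\varphi^w_\ast T(s,t))\,dw$ survives; it has the wrong integration range and an integrand depending on $(s,t)$ rather than $(w,t)$, so it is not in $\mathscr{T}$. Even if you only care about $s=0$ at the top, the inductive use on the inner $T_i(u,t)$ forces the claim at general $s=u>0$.

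The paper avoids both obstructions by \emph{not} separating the two effects. It proves a single sublemma: for every $T\in\mathscr{T}$, the combination
\[
\nabla_{X_0}T(s,t)\;+\;\int_0^s(\varphi^w)^\ast Z(\varphi^w_\ast X_0,\varphi^w_\ast T(s,t))\,dw\;+\;\sum_i T(\cdot)(\dots,L(X_0,X_i),\dots)
\]
lies in $\mathscr{T}$. In the induction on the build-up of $T$, the boundary integral exactly matches the $\int_0^u$ term that appears when passing $\nabla$ onto $\varphi^u_\ast T_i(u,t)$, and the $L$-substitutions into the arguments of $T$ descend to the $L$-substitutions into the arguments of each $T_i$. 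At the top level $s=0$ the correction integral vanishes, so this combined sublemma is precisely what the recursion for $W_t^n$ needs: the $\nabla T_j$ pieces from Leibniz and the $L_t$-substitution pieces from \eqref{eq:covariant_derivative_twisted_induction} are handled \emph{together}, not separately.
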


\begin{proof}
We have already checked it for $n=1$ and $n=2$. Actually, we will check that if $A_t$ is an $(n-1)$-tensorial operator of the type \eqref{eq:terms_W^n_t}, then
\begin{equation*}
\nabla A_t - \sum_{i=1, \dots, n} A_t(X_2, \dots, L_t(X_1, X_i), \dots, X_n)
\end{equation*}
is a sum of such operators (of orders $n$ and $n-1$). Let us observe that
\begin{equation*}
\begin{split}
\nabla \left[\nabla^k (T_1(0,t), \dots, T_k(0,t)) \right]&= \nabla^{k+1}(Id, T_1(0,t), \dots, T_k(0,t))  \\
		& - \sum_{i=1,\dots, k} \nabla^k (T_1(0,t), \dots,\nabla T_i(0,t), \dots, T_k(0,t)).
\end{split}
\end{equation*}
We deduce that it suffices to show that when $T\in \mathscr{T}$ is a $k$-tensor, 
\begin{equation*}
\begin{split}
T':=\nabla_X T(s,t)& + \int_0^s (\varphi^w)^\ast Z(\varphi^w_\ast X,\varphi^w_\ast T(s,t)) dw \\
	&+ \sum_{i=1}^k T(0,t)(X_1, \dots, L_t(X_0,X_i), \dots, X_k) 
\end{split}
\end{equation*}
is in $\mathscr{T}$. We prove this by induction on $k$. First, if $k=1$, $T$ is the identity, and
we find $T'(s,t) = L(s,t)$.

Assume we are done for all $k\leq n$. Then, let $T$ be a $n+1$ tensor in $\mathscr{T}$. By construction, it is a sum of terms as in \eqref{eq:form_tensors_recurrent}. Since the property we are trying to prove is stable by taking sums, assume there is only one term in the sum. The $T_i$'s all are of order $< n+1$, and we can compute, using \eqref{eq:covariant_vector_vector} in the first line
\begin{align*}
\begin{split}
\nabla_X T(s,t) 	&= \int_s^t  \int_0^u (\varphi^{u-w})^\ast (-Z)(\varphi^{u-w}_\ast X, \varphi^{-w}_\ast R(\varphi^u_\ast T_1(u, t), \dots, \varphi^u_\ast T_k(u, t))) dw du \\
				& \quad + \int_s^t (\varphi^u)^\ast \nabla_{\varphi^u_\ast X}\big( R(\varphi^u_\ast T_1(u, t), \dots, \varphi^u_\ast T_k(u, t))\big) du. 
\end{split}\\
\begin{split}
\nabla_X T(s,t) 	&= T^{-Z}_{Id, T}(s,t) + \int_0^s (\varphi^w)^\ast (-Z)(\varphi^w_\ast X,\varphi^w_\ast T(s,t)) dw  \\
				&\quad + \int_s^t (\varphi^u)^\ast (\nabla_{\varphi^u_\ast X} R)(\varphi^u_\ast T_k(u, t), \dots, \varphi^u_\ast T_k(u, t)) du. \\
				&\quad + \sum_{i=1}^k \int_s^t (\varphi^u)^\ast  R(\varphi^u_\ast T_1(u, t), \dots, \nabla_{\varphi^u_\ast X} \varphi^u_\ast T_i(u,t), \dots, \varphi^u_\ast T_k(u, t)) du.
\end{split}
\end{align*}
Hence we find
\begin{equation*}
\begin{split}
\nabla_X T(s,t) + &\int_0^s (\varphi^w)^\ast Z(\varphi^w_\ast X,\varphi^w_\ast T(s,t)) dw = T^{-Z}_{Id, T}(s,t) + T^{\nabla R}_{Id, T_1, \dots, T_k}(s,t)  \\
&+ \sum_{i=1}^k \int_s^t (\varphi^u)^\ast  R(\varphi^u_\ast T_1(u, t), \dots, \nabla_{\varphi^u_\ast X} \varphi^u_\ast T_i(u,t), \dots, \varphi^u_\ast T_k(u, t)) du
\end{split}
\end{equation*}
But we precisely have
\begin{equation*}
(\varphi^u)^\ast \nabla_{\varphi^u_\ast X} \varphi^u_\ast T_i(u,t) = \nabla_{X} T_i(u,t) + \int_0^u (\varphi^w)^\ast Z(\varphi^w_\ast X, \varphi^w_\ast T_i(u,t)) dw.
\end{equation*}
so we can use the induction hypothesis, and conclude.

\end{proof}

\begin{lemma}
When $T\in \mathscr{T}$ is a $n$-tensor, there is a constant $C>0$ such that whenever $0\leq s\leq t$, 
\begin{equation*}
\| \varphi^s_\ast T(s,t)((\varphi^t)^\ast X_1, \dots, (\varphi^t)^\ast X_n) \| \leq C e^{n \lambda (t-s)}\| X_1 \| \dots \| X_n\|.
\end{equation*}
\end{lemma}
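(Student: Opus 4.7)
My plan is to proceed by structural induction on how $T \in \mathscr{T}$ is built up, since $\mathscr{T}$ is generated from the identity by iterating the operation $T^R_{T_1,\ldots,T_k}$ with $k \geq 2$.

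For the base case $T = \mathrm{Id}$ (a $1$-tensor), the quantity to bound is simply
\begin{equation*}
\varphi^s_\ast \mathrm{Id}((\varphi^t)^\ast X) = d\varphi^{s-t}X.
\end{equation*}
Since $s \leq t$, the time $s-t$ is nonpositive, so by the definition of $\lambda_{max}$ and our choice $\lambda > \lambda_{max}$, one has $\|d\varphi^{s-t}\| \leq Ce^{\lambda(t-s)}$, which gives the claim with $n=1$.

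For the inductive step, suppose $T = T^R_{T_1,\ldots,T_k}$ with $k \geq 2$, each $T_i$ an $n_i$-tensor for which the estimate is known, and $R$ a smooth vector-valued $k$-tensor with bounded covariant derivatives (hence $\|R(V_1,\ldots,V_k)\| \leq C\|V_1\|\cdots\|V_k\|$). Set $n = n_1 + \cdots + n_k \geq 2$ and distribute the inputs $(\varphi^t)^\ast X_1,\ldots,(\varphi^t)^\ast X_n$ into groups $X^{(i)}$ of sizes $n_i$ fed into the corresponding $T_i$. Then
\begin{equation*}
\varphi^s_\ast T(s,t)((\varphi^t)^\ast X_1,\ldots,(\varphi^t)^\ast X_n) = \int_s^t \varphi^{s-u}_\ast R\bigl(\varphi^u_\ast T_1(u,t)((\varphi^t)^\ast X^{(1)}), \ldots, \varphi^u_\ast T_k(u,t)((\varphi^t)^\ast X^{(k)})\bigr)\,du.
\end{equation*}
Bounding $R$ by its tensor norm, applying the inductive hypothesis to each $T_i$ (which gives a factor $Ce^{n_i\lambda(t-u)}\prod_{j\in X^{(i)}}\|X_j\|$), and using $\|\varphi^{s-u}_\ast\| \leq Ce^{\lambda(u-s)}$ for $s \leq u$, the integrand is bounded by
\begin{equation*}
C\,e^{\lambda(u-s)}\,e^{n\lambda(t-u)}\,\textstyle\prod_j\|X_j\| \;=\; C\,e^{-(n-1)\lambda u}\,e^{n\lambda t - \lambda s}\,\textstyle\prod_j\|X_j\|.
\end{equation*}

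Integrating the factor $e^{-(n-1)\lambda u}$ from $u=s$ to $u=t$ yields at most $\frac{1}{(n-1)\lambda}e^{-(n-1)\lambda s}$, using crucially that $n\geq 2$ so that $n-1 \geq 1$. Combining the exponents gives the final bound $C'\,e^{n\lambda(t-s)}\prod_j\|X_j\|$, completing the induction. The only real subtlety is bookkeeping: verifying that $n\geq 2$ in every recursive case (ensured by the requirement $k\geq 2$ in the construction of $\mathscr{T}$), so that the exponent $-(n-1)\lambda$ in the integrand is strictly negative and the integral contributes only a constant, leaving the exponential rate $e^{n\lambda(t-s)}$ unchanged. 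Everything else is routine manipulation of pushforwards and pullbacks along the flow.
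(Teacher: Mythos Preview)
Your proof is correct and follows essentially the same approach as the paper: structural induction on $\mathscr{T}$, with the base case handled by the Lyapunov bound on $d\varphi^{s-t}$, and the inductive step by bounding the integrand via the tensor norm of $R$, the induction hypothesis on each $T_i$, and the Lyapunov bound on $\varphi^{s-u}_\ast$, then integrating the resulting exponential. Your explicit remark that $n\geq 2$ (because $k\geq 2$) is what makes the integral $\int_s^t e^{-(n-1)\lambda u}\,du$ harmless is exactly the point the paper uses implicitly.
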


\begin{proof}
We proceed by induction. First, for the identity, this is true because the maximal lyapunov exponent of the flow is bounded. Now, we assume it is true for all $k$-tensors in $\mathscr{T}$ with $k\leq n$, and let $T\in \mathscr{T}$ be a $n+1$ tensor.
\begin{equation*}
\| \varphi^s_\ast T(s,t) \| \leq \int_s^t e^{\lambda(u-s)} \prod_{i=1}^k \| \varphi^u_\ast T_i(u,t) \| du
\end{equation*}
If we use the induction hypothesis, we get
\begin{equation*}
\| \varphi^s_\ast T(s,t) \| \leq C \| X_1 \| \dots \| X_n\| \underset{\leq C e^{\lambda (n+1) (t-s)}}{\underbrace{\int_s^t e^{\lambda(u-s) + (n+1) \lambda(t-u)}du}}
\end{equation*}
\end{proof}

We conclude the proof by observing that
\begin{equation*}
\nabla^n (f\circ \varphi_{-t})(X_1, \dots, X_n) = W^n_t f((\varphi^t)^\ast X_1, \dots, (\varphi^t)^\ast X_n)
\end{equation*}

\end{proof}

\section{On the Sasaki metric}\label{appendix:Sasaki}

\subsection{The curvature tensor of a Sasaki metric}

There is a useful --- and easily accessible --- reference for the Sasaki metric on tangent spaces: \cite{MR1888866}. We are going to rely heavily on it to avoid introducing too much machinery. In the following paragraph, we retain the notations therein. We want to show that proposition \ref{prop:estimating_derivatives_propagation_flow} applies to the geodesic flow cusp surfaces. We prove
\begin{prop}
Assume that the curvature tensor of $M$ is bounded, and all its covariant derivatives also. For $R>0$, let $TM_R :=\{ v \in TM \; |\; \| v \|^2 \leq 2R\}$ be endowed with the Sasaki metric. Then
\begin{enumerate}
	\item The curvature tensor of $TM_R$, and all its derivatives are bounded.
	\item It is also the case for the vector of the geodesic flow\end{enumerate}
\end{prop}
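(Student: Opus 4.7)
My plan is to rely on the explicit local formulae (as in \cite{MR1888866}) expressing the Levi-Civita connection, the curvature tensor, and the geodesic spray of the Sasaki metric on $TM$ in terms of the base connection, the base curvature tensor $R$, and the footpoint vector $v$. The guiding principle is that all these tensors on $TM$ are polynomial in $v$ with coefficients built from $R$ and its covariant derivatives evaluated at $\pi(v)$, so the assumptions on $M$ together with the compactness condition $\|v\|^2\leq 2R$ will force uniform bounds.

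First, I would recall the Sasaki splitting $T_v(TM)=H_v\oplus V_v$, with the horizontal and vertical lifts $X\mapsto X^h$, $X\mapsto X^v$ being isometries onto their images. In this framework the Levi-Civita connection $\widetilde\nabla$ of the Sasaki metric is given by closed-form expressions of the shape $\widetilde\nabla_{X^h}Y^h=(\nabla_XY)^h-\tfrac12(R(X,Y)v)^v$, $\widetilde\nabla_{X^h}Y^v=(\nabla_XY)^v+\tfrac12(R(v,Y)X)^h$, and so on. From this one reads off the curvature tensor $\widetilde R$ as a sum of terms of the form $(R(\cdot,\cdot)\cdot)^h$, $((\nabla_\cdot R)(\cdot,\cdot)v)^v$, and $(R(v,\cdot)R(v,\cdot)\cdot)^h$, i.e.\ a polynomial of degree $\leq 2$ in $v$ whose coefficients involve $R$ and $\nabla R$ at the footpoint.

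Step 1 (curvature bound): Applying the hypothesis that $R$ and all $\nabla^k R$ are bounded on $M$, and using $\|v\|\leq\sqrt{2R}$ on $TM_R$, each of those terms is uniformly bounded in Sasaki norm. So $\|\widetilde R\|_{\text{Sasaki}}$ is bounded on $TM_R$. Step 2 (higher derivatives): Proceed by induction on the order $k$ of $\widetilde\nabla^k\widetilde R$. The point is that $\widetilde\nabla$ applied to a tensor of the polynomial form above produces another tensor of the same form: horizontal differentiation $\widetilde\nabla_{X^h}$ turns $R$ and $\nabla^j R$ at the footpoint into $\nabla R$ and $\nabla^{j+1} R$ (still bounded by hypothesis), while vertical differentiation $\widetilde\nabla_{X^v}$ acts on $v$ itself, lowering the polynomial degree by one (bounded because $\|v\|$ is bounded) or producing curvature correction terms coming from the Christoffel symbols, which are themselves of the admissible form. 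An induction therefore shows that each $\widetilde\nabla^k\widetilde R$ is again polynomial in $v$ with coefficients built from a finite number of covariant derivatives of $R$, hence uniformly bounded on $TM_R$. This proves (1).

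Step 3 (geodesic vector): The geodesic spray at $v$ is exactly the horizontal lift $G(v)=v^h$, so $\|G\|_{\text{Sasaki}}(v)=\|v\|\leq\sqrt{2R}$ on $TM_R$. Its covariant derivatives $\widetilde\nabla^k G$ can be computed from the formulas for $\widetilde\nabla$ above: $\widetilde\nabla_{X^h}G=0+\tfrac12(R(v,v)X)^h=0$ plus lower-order corrections, and $\widetilde\nabla_{X^v}G=X^h$. Iterating, every $\widetilde\nabla^k G$ is again a polynomial in $v$ with coefficients in the covariant derivatives of $R$, so the induction of Step 2 applies verbatim and gives (2). The main technical obstacle is simply organizing the bookkeeping for the induction: one should introduce an abstract class of "admissible tensors on $TM$" (polynomial in $v$, with coefficients that are universal polynomials in the $\nabla^j R$), verify that this class is stable under the Sasaki Levi-Civita connection, and check that $\widetilde R$ and $G$ lie in it; all the boundedness statements then follow at once from the hypotheses on $M$ and the definition of $TM_R$.
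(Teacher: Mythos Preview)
Your proposal is correct and follows essentially the same route as the paper: the paper also relies on the explicit formulae of \cite{MR1888866}, introduces an abstract class $\mathscr{B}$ of tensors on $TM$ built from horizontal/vertical lifts of bounded-derivative tensors on $M$ together with the tautological vectors $u^h,u^v$ (this is exactly your ``admissible class'' of tensors polynomial in $v$ with coefficients in the $\nabla^j R$), observes that both the Sasaki curvature and the spray $u^h$ lie in it, and then proves by the same induction that $\mathscr{B}$ is stable under the Sasaki covariant derivative. One small correction to your Step~3: $\widetilde\nabla_{X^h}(v^h)$ is not quite zero --- since $v$ is the tautological footpoint vector and not a fixed vector field, the paper's normal-coordinate computation gives $\nabla_{a^h+b^v}u^h=b^h+\tfrac12(R(u,b)u)^h-\tfrac12(R(a,u)u)^v$, so a curvature term survives; this does not affect your argument since the result is still in the admissible class.
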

Remark that when the curvature of $M$ is constant, the covariant derivative of the curvature tensor is just $0$, so the above proposition applies to cusp manifolds --- and more generally to any geometrically finite manifold with hyperbolic ends.

\begin{proof}
We denote $(p,u)$ for points of $T^\ast M$. If $X$ is a vector in $T_p M$, we denote by $X^h$ (resp. $X^v$) its horizontal (resp. vertical) lift, which are vectors in $T_{(p,u)}TM$.

Let $T$ be a vector valued tensor on $M$. From $T$ we can construct a variety of vector valued on $TM$. Indeed, first, we can construct tensors  on $M$ valued in $TTM$ by taking either the vertical of the horizontal lift of $T$. Then, we can compose $T$ by either $X^v \mapsto X$ or $X^h \mapsto X$. We consider now the class $\mathscr{B}_0$ of tensors on $TM$ that are obtained in this way when $T$ and all its derivatives are bounded. We also require that $0$-tensors $u^h$ and $u^v$ are in $\mathscr{B}_0$. Now, $\mathscr{B}$ is the smallest class of tensors stable by composition and sums that contains $\mathscr{B}_0$. 

From the formulae page 16 (prop. 7.5) for the curvature tensor of the Sasaki metric, we see that it is in $\mathscr{B}$ since the curvature tensor $R$ of $M$ as well as all its derivatives are bounded. The vector of the geodesic flow also is in $\mathscr{B}$ because it is $V(p,u) = u^h$.

We want to prove that $\mathscr{B}$ is \emph{stable under covariant derivatives}. We work in local coordinates. Observe that since covariant derivatives behave well with composition and sums, it suffices to prove that covariant derivatives of elements of $\mathscr{B}_0$ are in $\mathscr{B}$.

Let $p\in M$, let $U$ be some small open set containing $p$ where the normal coordinates at $p$, $\exp_p^{-1} : U \to T_p M$ are well defined. Taking an orthonormal basis $X_1, \dots, X_n$ in $T_p M$, we have coordinates $x_1, \dots, x_n$ on $U$. Then, we can consider coordinates $v_1, \dots, v_{2n}$ on $TU$ as in page 6 of \cite{MR1888866}. Since we have taken normal coordinates, the Christoffel coefficients vanish at $p$, and we have (see lemma 4.3 p. 7)
\begin{equation*}
\partial_{x_i}(p)^h = \partial_{v_i} \quad \partial_{x_i}(p)^v = \partial_{v_{n+i}}.
\end{equation*}
At a point $(p',u)$, we have 
\begin{equation}\label{eq:decomposition_local_basis}
u= \sum v_{n+k} \partial_{x_k}(p').
\end{equation}
Since we have taken normal coordinates, the $\nabla_{\partial_{x_1}} \partial_{x_i}$ vanish at $p$. From this and the formulae for covariant derivatives in proposition 7.2 page 15, we find
\begin{equation*}
\nabla_{a^h + b^v} u^h = b^h + \frac{1}{2} (R_p(u,b)u)^h - \frac{1}{2} (R_p(a,u)u)^v.
\end{equation*}
and 
\begin{equation*}
\nabla_{a^h + b^v} u^v = b^v + \frac{1}{2} (R_p(u,u)a)^h.
\end{equation*}

Now, we take $T$ a tensor on $M$ with all its derivatives bounded, and we just consider the case when $T$ is a $1$ tensor, and $T'(a^h + b^v) = (T(a))^h$. This defines an element of $\mathscr{B}_0$. 
\begin{equation*}
(\nabla_{X^h + Y^v} T')(a^h + b^v) = \nabla_{X^h + Y^v} (T(a))^h - T'( \nabla_{X^h + Y^v} (a^h + b^v)).
\end{equation*}
Using again the formulae for Sasaki covariant derivatives, we can expand this expression. There will be terms containing $\nabla_X T$ and terms involving $R_p$, $u$ and $T$, so the result \emph{will} be an element of $\mathscr{B}$.

To give a complete proof, we would have to consider all the possibilities that lead to similar computations; we leave this as an exercise for the reader.
\end{proof}

\subsection{The Sasaki metric in a cusp, and symbols}

Now, $M$ is a cusp manifold. The Sasaki metric is \emph{a priori} defined on the tangent space. However, there is a correspondance $v \mapsto \langle v, . \rangle$ between $T M$ and $T^\ast M$, and we define the Sasaki metric on $T^\ast M$ by pushing forward the metric on $T M$. As a consequence, $T^\ast M$ is endowed with a connection $\nabla$ and $\mathscr{C}^k$ norms. The following fact is the key to proving the Egorov lemma \ref{thm:Egorov_lemma}.
\begin{prop}\label{prop:equivalence_norms_Sasaki}
Take $E>0$, and consider functions on $T^\ast M$ supported in $(T^\ast M)_E:=\{ p(\xi) \leq E\}$. For such functions, the $\mathscr{C}^k(T^\ast M)$ norm is equivalent to the norm given by symbol estimates with $k' \leq k$ derivatives.
\end{prop}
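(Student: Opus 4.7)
The plan is to work chart by chart. In any relatively compact chart of $M$, the Sasaki metric is uniformly equivalent on $(T^\ast M)_E$ to the Euclidean metric on $T^\ast \R^{d+1}$, so the equivalence between the $\mathscr{C}^k$-norm and the symbol norm (with at most $k$ derivatives and the trivial weight $\langle \xi\rangle$) is standard. All the content therefore lies in the cusps, where I would spend my effort.

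In a cusp $Z$, I would introduce the $g$-orthonormal frame $e_0 = X_y$, $e_i = X_{\theta_i}$ of $TZ$, with dual coframe $\omega^0 = dy/y$, $\omega^i = d\theta_i/y$, and the associated fiber coordinates $A := yY$, $B_i := yJ_i$, so that $\xi = A\omega^0 + \sum_i B_i \omega^i$ and $|\xi|^2_x = A^2 + |B|^2 = 2p$. On the support $\{p\leq E\}$, $(A,B)$ stays in the Euclidean ball of radius $\sqrt{2E}$. Using \eqref{eq:Christoffel_symbols} to compute parallel transport of $\xi$, the horizontal lifts of $e_0, e_i$ and the vertical lifts of $\omega^0, \omega^i$ are
\begin{equation*}
E_0 = X_y - AX_Y - \sum_j B_j X_{J_j}, \quad E_i = X_{\theta_i} - B_i X_Y + A X_{J_i}, \quad F_0 = X_Y, \quad F_i = X_{J_i},
\end{equation*}
and $(E_0, E_i, F_0, F_i)$ is a Sasaki-orthonormal frame of $T(T^\ast Z)$.

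The crux of the argument is that the Sasaki Christoffel symbols in this frame are polynomials in $(A,B)$ with constant coefficients, hence uniformly bounded on $\{p \leq E\}$ together with all their iterated derivatives by any $D \in \{E_a, F_a\}$. This follows from the explicit formulas for the Sasaki Levi-Civita connection (proposition 7.2 in \cite{MR1888866}) applied to the $(E_\ast, F_\ast)$-frame: the ``horizontal part'' contributes constants coming from $\nabla_{e_a} e_b$ in \eqref{eq:Christoffel_symbols}, while the ``curvature part'' contributes terms linear in the fiber coordinates $(A,B)$ via the Riemann tensor of the hyperbolic cusp (which has constant sectional curvature $-1$). Consequently, the $\mathscr{C}^k$-norm is equivalent to $\sum_{|\alpha| \leq k} \sup |D_{\alpha_1} \cdots D_{\alpha_k} f|$ with $D$ ranging over $\{E_a, F_a\}$.

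Finally, I would translate between the $(E_\ast, F_\ast)$-derivatives and the $X_\ast$-derivatives used in the symbol estimates. The relations $X_y = E_0 + AF_0 + \sum_j B_j F_j$, $X_{\theta_i} = E_i + B_i F_0 - A F_i$, $X_Y = F_0$, $X_{J_i} = F_i$, together with the fact that $E_a(A), E_a(B_i), F_a(A), F_a(B_i)$ are polynomials in $(A,B)$, yield by induction that any $X_\alpha f$ is a finite sum of products of polynomials in $(A,B)$ with iterated $(E_\ast, F_\ast)$-derivatives of $f$ of order $\leq |\alpha|$, and conversely. On $\{p\leq E\}$, both $(A,B)$ and $\langle y\xi\rangle$ are bounded, so the weighted symbol bounds collapse to unweighted bounds on $|X_\alpha f|$, which in turn are equivalent to bounds on the frame derivatives, i.e.\ to the $\mathscr{C}^k$-norm. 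The main obstacle is the bookkeeping verification that the Sasaki Christoffel symbols are polynomial in $(A,B)$ with bounded iterated derivatives; once this is in hand, everything else is algebra and uniform boundedness on $\{p\leq E\}$.
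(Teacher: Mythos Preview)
Your proof is correct and follows the same underlying strategy as the paper: reduce to the cusp, show that the Sasaki connection coefficients in a suitable frame are bounded on $\{p\leq E\}$, and relate that frame to the $X_\ast$-frame used in the symbol estimates. The execution differs: you work in the Sasaki-orthonormal frame $(E_\ast,F_\ast)$ built from horizontal/vertical lifts and invoke the Sasaki connection formulas from \cite{MR1888866}, whereas the paper stays in the $X_\ast$-frame throughout, computes the metric $g(X_\alpha,X_\beta)$ explicitly, applies the Koszul formula to see that $\nabla_{X_\alpha}X_\beta$ has coefficients in $S^1_{\mathcal{V}}$, and then handles the non-orthonormality of the $X_\ast$-frame by a determinant check on the quadratic form $q(u)=\|u_yX_y+u_\theta X_\theta+u_YX_Y+u_JX_J\|^2$. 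Your choice of an orthonormal frame trades that determinant argument for the final change-of-basis step between $(E_\ast,F_\ast)$ and $X_\ast$; both approaches ultimately rest on the same fact, namely that on $\{p\leq E\}$ all the structural coefficients are polynomials in $(yY,yJ)$ and hence uniformly bounded.
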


\begin{proof}
The part of $(T^\ast M)_E$ above the compact part of $M$ is relatively compact, so all $C^k$ norms over it are equivalent. We just have to work in the cusps. Let us first start by finding the expression for the Sasaki metric in a cusp $Z$; we use again \cite{MR1888866}. We have coordinates $y, \theta$, and the coordinates on the tangent space $v_y, v_\theta$. From \eqref{eq:Christoffel_symbols}, we can compute
\begin{equation*}
\begin{split}
y\nabla_{\partial_y} \partial_y &= -\partial_y \\
y\nabla_{\partial_y} \partial_{\theta_i} &= - \partial_{\theta_i} \\
y\nabla_{\partial_{\theta_i}} \partial_y &= - \partial_{\theta_i}\\
y\nabla_{\partial_{\theta_i}} \partial_{\theta_j} &= \delta_{ij} \partial_y 
\end{split}
\end{equation*}
We deduce that the Sasaki metric on $TM$ is
\begin{equation*}
g = \frac{1}{y^2} \left( dy^2 + d\theta^2 + (dv_y + \frac{1}{y}(v_\theta .d\theta - v_y dy))^2 + ( dv_\theta - \frac{1}{y}(v_\theta dy + v_y d\theta))^2 \right)
\end{equation*}
Now, $v_y = y^2 Y$, and $v_\theta = y^2 J$, so this gives on $T^\ast M$
\begin{equation*}
g =  \frac{ dy^2 + d\theta^2}{y^2} + y^2 \left( (dY + \frac{1}{y}(J.d\theta - Ydy))^2 + ( dJ - \frac{1}{y}(J dy + Y d\theta))^2 \right)
\end{equation*}
Recall that $p= |\xi|^2/2$ is the symbol of $-h^2 \Delta/2$. We get that 
\begin{equation*}
g(X_y) = g(X_{\theta_i}) = 1 + 2p \quad \text{ and } \quad g(X_Y) = g(X_{J_i}) = 1.
\end{equation*}
and when $k\neq i$ --- $\langle .,. \rangle$ being the scalar product,
\begin{equation*}
\langle X_y, X_{\theta_i} \rangle = \langle X_{\theta_k}, X_{\theta_i} \rangle = \langle X_Y, X_{J_i} \rangle = \langle X_{J_k}, X_{J_i} \rangle = 0
\end{equation*}
\begin{equation*}
\langle X_y, X_Y\rangle = - yY, \quad \langle X_y, X_{J_i} \rangle = - yJ_i, \quad \langle X_{\theta_i}, X_Y \rangle = y J_i, \quad \langle X_{\theta_i}, X_{J_k} \rangle = - \delta_{ik} y Y.
\end{equation*}
If we use the Koszul formula \cite{Paulin} to determine the covariant derivatives of $X_{y,\theta, Y, J}$, we will find that they are of the type $a X_y + b X_{\theta} + c X_Y + d X_J$, where $a,b,c,d$ are elements of $S^1_{\mathcal{V}}$ --- defined in the paragraph after \eqref{eq:symbol_laplacian}. As a consequence, if $\alpha$ is a finite sequence of $\alpha_j \in \{ y, \theta_i, Y, J_k\}$ of length $k$, there are symbols $f_\beta \in S_\mathcal{V}$ for all sequences $\beta$ of the same type, of length $k'<k$, such that $f_\beta$ is of order $\leq k-k'$, and
\begin{equation*}
\hat{\nabla}^k_{X_{\alpha_1} \dots X_{\alpha_k}} = X_\alpha + \sum_{\beta} f_\beta X_\beta
\end{equation*}
From this we deduce that on $(T^\ast M)_E$, the norms 
\begin{equation*}
\Big\{\sum_{|\alpha| \leq k} q_{n,\alpha}\Big\}_n \text{ and }\sum_{|\alpha| \leq k} \sup_{T^\ast M}\|\hat{\nabla}^k_{X_\alpha}\|
\end{equation*} 
are equivalent. We are left to prove that the latter is equivalent to the $\mathscr{C}^k(T^\ast M)$ norm. It is \emph{a priori} bounded by it, so we need to prove a lower bound.

We have coordinates in each $T_\xi (T^\ast Z)$ given by
\begin{equation*}
T_\xi (T^\ast Z) \owns X = u_y X_y + u_\theta X_\theta + u_Y X_Y + u_J X_J
\end{equation*}
this defines a map $u_\xi :T_\xi T^\ast Z \to \R^{2d+2}$. Let us endow $\R^{2d+2}$ with the Euclidean metric. The equivalence to the $\mathscr{C}^k(T^\ast M)$ norm is assured if both $u_\xi$ and $u_\xi^{-1}$ are bounded independently of $\xi$ as long as $p(\xi) \leq E$. Let us compute:
\begin{equation*}
\| u_y X_y + u_\theta X_\theta + u_Y X_Y + u_J X_J \|^2 = u_y^2 + u_\theta^2 + (u_Y +yJ . u_\theta - yY u_y)^2 + (u_J -yJu_y - yY u_\theta)^2.
\end{equation*}
This is a bounded, positive quadratic form $q$ on $\R^{2d+2}$. To end the proof, we need to show that there is some $C>0$ such that $q>C .Id$ independently of $\xi$, when $p(\xi)\leq E$. However, since $q$ is bounded by $1+2p$, it suffices to prove that its determinant is bigger than some positive constant not depending on $\xi$. Some elementary computations show that the determinant is actually
\begin{equation*}
(1+y^2 J^2)^{d-1} \geq 1.
\end{equation*}
\end{proof}

\bibliographystyle{alpha}
\bibliography{biblio_article_Quantum_evolution_observables}

\end{document}